\definecolor{labelkey}{rgb}{0,0,1}
\numberwithin{equation}{section}
\newtheorem{theorem}{Theorem}[section]
\newtheorem{corollary}[theorem]{Corollary}
\newtheorem{proposition}[theorem]{Proposition}
\newtheorem{lemma}[theorem]{Lemma}
\newtheorem{definition}{Definition}[section]
\theoremstyle{definition}
\newtheorem{remark}[theorem]{Remark}
\newcommand{\norm}[1]{\left\|#1\right\|}
\newcommand{\abs}[1]{\left|#1\right|}
\newcommand*{\supp}{\ensuremath{\mathrm{supp\,}}}
\newcommand*{\Id}{\ensuremath{\mathrm{Id}}}
\newcommand*{\RSZ}{\ensuremath{\mathcal{R}}}
\renewcommand*{\div}{\ensuremath{\mathrm{div\,}}}
\newcommand*{\N}{\ensuremath{\mathbb{N}}}
\newcommand*{\T}{\ensuremath{\mathbb{T}}}
\newcommand{\eps}{\varepsilon}
\newcommand{\BB}{\mathcal B}
\newcommand{\GG}{\mathcal G}
\newcommand{\vo}{\overline v_q}
\newcommand{\RR}{\mathring R}
\newcommand{\RRO}{\mathring{\overline{R}}_q}
\renewcommand*{\tilde}{\widetilde}
\newcommand*{\curl}{\ensuremath{\mathrm{curl\,}}}
\newcommand{\les}{\lesssim}
\newcommand{\Proj}{\ensuremath{\mathbb{P}}}
\title{Wild solutions of the Navier-Stokes equations whose singular sets in time have Hausdorff dimension strictly less than $1$}
\author{Tristan Buckmaster}
\address{Department of Mathematics, Princeton University, Princeton, NJ 08544, USA}
\email{buckmaster@math.princeton.edu}
\author{Maria Colombo}
\address{EPFL SB, Station 8,  CH-1015 Lausanne, Switzerland}
\email{maria.colombo@epfl.ch}
\author{Vlad Vicol}
\address{Courant Institute for Mathematical Sciences, New York Univeristy, New York, NY 10012, USA}
\email{vicol@cims.nyu.edu}
\begin{document}

%%% The "real" document content comes below...

\begin{abstract}
We prove non-uniqueness for a class of weak solutions to the Navier-Stokes equations which have bounded kinetic energy,  integrable vorticity, and are smooth outside a fractal set of singular times with Hausdorff dimension strictly less than $1$.
\end{abstract}

\maketitle

%\setcounter{tocdepth}{2}
%\tableofcontents

\section{Introduction}
\label{sec:intro}

Throughout this paper we consider the \emph{incompressible three dimensional Navier-Stokes equations}:
\begin{subequations}\label{eq:NSE:*}
\begin{align}
\partial_t v+ \div (v \otimes  v ) +\nabla p - \Delta v &= 0 \label{eq:NSE:*:a} \\
\div v &= 0 \label{eq:NSE:*:b} \\
v|_{t=0} &= v_0
\end{align}
\end{subequations}
posed on the torus $\T^3=[-\pi,\pi]^3$. We consider solutions of zero mean, i.e.\ $\int_{\T^3} v(x,t) dx = 0$ for all $t \in [0,T]$. 
The notion of \emph{weak solution} of \eqref{eq:NSE:*} that we work with in this paper is that of a \emph{distributional solution}, which has bounded kinetic energy, and is {strongly} continuous in time:
\begin{definition}[\bf Weak solution]
\label{def:weak}
Given any zero mean initial datum $v_0 \in L^2$, we say $v \in C^0([0,T);L^2(\T^3))$ is a \emph{weak} solution of the Cauchy problem for the Navier-Stokes equations \eqref{eq:NSE} if the vector field $v(\cdot,t)$ is weakly divergence-free for all $t \in [0,T)$, has zero mean, and
\begin{align*}
\int_{\T^3}  v_0 \cdot \varphi(\cdot,0) dx + \int_{0}^T \! \int_{\T^3} v \cdot (\partial_t \varphi + (v \cdot \nabla) \varphi +   \Delta \varphi ) dx dt = 0
\end{align*}
holds for any test function $\varphi \in C_0^\infty(\T^3 \times [0,T))$ such that $ \varphi(\cdot,t) $ is divergence-free for all $t$.
\end{definition}
In view of the $C^0([0,T);L^2(\T^3))$ regularity, by~\cite{FabesJonesRiviere72} we have that the above defined weak solutions are also  {\em mild} or \emph{Oseen} solutions (see also~\cite[Chapter 6]{Lemarie16}) of the Navier-Stokes equations. That is, for $t\in [0,T)$ we have
\begin{align}
v(\cdot,t) = e^{  t  \Delta} v_0 + \int_0^t  e^{  (t-s)\Delta}\Proj_H \div(v(\cdot,s)\otimes v(\cdot,s)) ds \, .
\label{eq:mild}
\end{align}
Here $\Proj_H$ is the Helmholtz projector and $e^{t\Delta}f $ is the heat extension of $f$. Our main result of this paper is as follows.

\begin{theorem}[\bf Main result]
\label{thm:RT}
There exists a $\beta>0$ such that the following holds.
For $T>0$, let $u^{(1)}, u^{(2)} \in C^0([0,T];\dot{H^3}(\T^3))$ be two strong solutions  of the Navier-Stokes equations \eqref{eq:NSE:*:a}--\eqref{eq:NSE:*:b} on $[0,T]$, with data $u^{(1)}(0,x)$ and $u^{(2)}(0,x)$ of zero mean. There exists a weak solution $v$ of the Cauchy problem to \eqref{eq:NSE:*} on $[0,T]$  with initial datum $v|_{t=0} = u^{(1)}|_{t=0}$, which has the additional regularity
\[ 
v \in C^0([0,T];H^\beta(\T^3) \cap W^{1,1+\beta}(\T^3)) \, ,
\] 
and such that 
\[
v \equiv u^{(1)} \quad \mbox{on} \quad [0,\sfrac{T}{3}], \qquad \mbox{and} \qquad  v \equiv u^{(2)} \quad \mbox{on} \quad [\sfrac{2T}{3},T]\, .
\]
Moreover, for every such $v$ there exists a zero Lebesgue measure set of times $\Sigma_T \subset (0,T]$ with Hausdorff (in fact box-counting) dimension less than $1-\beta$, such that 
\[
v \in C^\infty \bigl( ((0,T]\setminus \Sigma_T) \times \T^3\bigr). 
\]
In particular, the weak solution  $v$ is almost everywhere smooth.
\end{theorem}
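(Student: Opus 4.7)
The plan is a convex integration scheme for the Navier-Stokes-Reynolds system
\[
\partial_t v_q + \div(v_q \otimes v_q) + \nabla p_q - \Delta v_q = \div \mathring R_q, \qquad \div v_q = 0,
\]
producing a sequence $(v_q, \mathring R_q)_{q \ge 0}$ at increasing frequencies $\lambda_q$ and decreasing Reynolds amplitudes $\delta_{q+1}$. The new inductive hypothesis, relative to a standard convex integration scheme for Navier-Stokes, is a \emph{temporal support} bookkeeping: I maintain a finite family $\mathcal I_q$ of disjoint closed time intervals of common length $\tau_q$ such that $\supp \mathring R_q \subset \T^3 \times \mathcal I_q$, and $v_q$ coincides with a classical strong Navier-Stokes solution on $[0,T]\setminus \mathcal I_q$, with $v_q \equiv u^{(1)}$ on $[0,\sfrac{T}{3}]$ and $v_q \equiv u^{(2)}$ on $[\sfrac{2T}{3},T]$. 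The initial pair $(v_0,\mathring R_0)$ is built by smoothly interpolating in time between $u^{(1)}$ and $u^{(2)}$ on $[\sfrac{T}{3},\sfrac{2T}{3}]$, yielding a defect supported there.

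\textbf{Inductive step.} Each step has two substeps. First, a \emph{gluing substep} in the spirit of Isett's proof of the Onsager conjecture, adapted to the viscous equation: on each interval $I \in \mathcal I_q$ one subdivides into sub-intervals of length $\theta \tau_q$ for a fixed small $\theta \in (0,1)$, solves exactly the Navier-Stokes Cauchy problem starting from $v_q$ at the left endpoint of each sub-interval (using local well-posedness guaranteed by an inductive bound on $\|v_q\|_{\dot H^3}$), and patches via a smooth temporal partition of unity. This produces $\tilde v_q$ with a Reynolds stress $\tilde{\mathring R}_q$ supported only in narrow neighborhoods of the junction points: a family $\tilde{\mathcal I}_q$ of short intervals of total Lebesgue measure at most $\theta\,|\mathcal I_q|$. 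Second, a \emph{perturbation substep}: add a divergence-free correction $w_{q+1}$ built from spatially intermittent Beltrami or jet waves modulated by temporal cutoffs supported in slight enlargements of the junction intervals; the low-frequency resonant part of $w_{q+1}\otimes w_{q+1}$ cancels $\tilde{\mathring R}_q$, producing a new stress $\mathring R_{q+1}$ of perturbative size supported in $\mathcal I_{q+1}$ (the enlarged junction family), whose intervals have length $\tau_{q+1}$ chosen much smaller than $\theta \tau_q$ but safely above $\lambda_{q+1}^{-1}$.

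\textbf{Passage to the limit and dimension count.} With $\lambda_q,\delta_q$ chosen super-exponentially and $\beta>0$ small, the $H^\beta\cap W^{1,1+\beta}$ norms of $w_{q+1}$ form a summable series, so $v:=\lim_q v_q$ exists in $C^0_t(H^\beta \cap W^{1,1+\beta})$, is a weak solution of~\eqref{eq:NSE:*}, and equals $u^{(1)}$, respectively $u^{(2)}$, on the required end intervals. Set $\Sigma_T := \bigcap_q \mathcal I_q$; any $t \notin \Sigma_T$ lies outside some $\mathcal I_{q_0}$, whence $v_{q} \equiv v_{q_0}$ for all $q \ge q_0$ on a neighborhood of $t$ and coincides there with a classical strong solution, so $v \in C^\infty$ in a neighborhood of $\{t\}\times \T^3$. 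For the upper box-counting dimension, $\mathcal I_q$ covers $\Sigma_T$ by $\#\mathcal I_q$ intervals of length $\tau_q$, so
\[
\dim_B \Sigma_T \;\le\; \limsup_{q\to\infty} \frac{\log \#\mathcal I_q}{\log(1/\tau_q)},
\]
which can be forced strictly below $1-\beta$ by selecting the per-step multiplicity $\#\mathcal I_{q+1}/\#\mathcal I_q$ sufficiently small compared to $\tau_q/\tau_{q+1}$.

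\textbf{Main obstacle.} The central difficulty is satisfying three competing constraints on the time parameters simultaneously: (i) $\theta\tau_q$ may not exceed the lifespan of the local strong solution emanating from data of size $\|v_q\|_{H^3}$, which forces a quantitative inductive control of that norm; (ii) $\tau_{q+1}$ must be large relative to $\lambda_{q+1}^{-1}$ and the spatial intermittency parameters in order for the temporally localized self-interaction of $w_{q+1}$ to reproduce $\tilde{\mathring R}_q$; and (iii) the geometric count $\#\mathcal I_q\cdot\tau_q^{1-\beta}$ must tend to $0$. Balancing (i)--(iii) within a single universal choice of parameters, while keeping the inductive estimates in the $H^\beta \cap W^{1,1+\beta}$ class, is the principal analytic hurdle, and is where the sharpness of the intermittent building blocks must be exploited.
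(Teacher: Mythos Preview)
Your proposal is correct and follows essentially the same route as the paper: an initial time-interpolation between $u^{(1)}$ and $u^{(2)}$, a gluing step that patches exact local Navier--Stokes solutions via a temporal partition of unity to concentrate the Reynolds stress on short junction intervals, and a convex integration step with spatially intermittent building blocks (the paper's specific choice is \emph{intermittent jets}) modulated by temporal cutoffs, together with the same box-counting argument based on the geometric decay of $\#\mathcal I_q \cdot \tau_q$. The only refinements to note are that the paper takes the junction-interval-to-subdivision-length ratio to be $q$-dependent (namely $\tau_{q+1}/\vartheta_{q+1}=\lambda_q^{-\eps_R/4}\to 0$, not a fixed $\theta$), which is what drives the strict inequality in the dimension bound, and that the $W^{1,1+\beta}$ regularity is obtained a posteriori from the mild formulation and $C^0_t H^\beta$ rather than being propagated inductively.
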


The outline of the proof of Theorem~\ref{thm:RT} is given in Section~\ref{sec:outline}, while the detailed estimates are done in Sections~\ref{sec:gluing}--\ref{sec:stress}.

\begin{remark}[\bf Non-uniqueness of weak solutions for strong initial datum]
\label{rem:nonunique:1}
Theorem~\ref{thm:RT} immediately implies that weak solutions of the Cauchy problem for the Navier-Stokes equation~\eqref{eq:NSE:*}, cf.~Definition~\ref{def:weak}, are not unique.

The cheap way to see this is to take any $T>0$, $u^{(1)} \equiv 0$, and $u^{(2)}$ to be any nontrivial mean-free solution of the Navier-Stokes equation on $[0,T]$ (e.g.\ a shear flow). Then the weak solution $v$ given by Theorem~\ref{thm:RT} is nontrivial on $[0,T]$, and thus $0$ is not the only weak solution with $0$ initial datum. Conversely, we note that taking $u^{(1)}$ to be any nontrivial solution to the Navier-Stokes equation, and $u^{(2)} \equiv 0$, Theorem~\ref{thm:RT} gives a counterexample to backwards (in time) uniqueness for weak solutions of \eqref{eq:NSE:*}, in the sense of Definition~\ref{def:weak}.

More generally, we emphasize that Theorem~\ref{thm:RT} proves the non-uniqueness of weak solutions to the Cauchy problem for the Navier-Stokes equation~\eqref{eq:NSE:*} for \emph{any strong initial datum}. To see this, consider any $v_0 \in \dot{H}^3$ and take $T = c \norm{v_0}_{H^3}^{-1}$, where $c>0$ is a sufficiently small universal constant (cf.~Proposition~\ref{prop:local:existence}). Then there exists a unique solution $u^{(1)} \in C^0([0,T];H^3)$ to the Cauchy problem \eqref{eq:NSE} with datum $v_0$. Moreover $\norm{u^{(1)}(T)}_{L^2} \leq \norm{v_0}_{L^2}$. However, using Theorem~\ref{thm:RT}  one can glue to this solution the shear flow $u^{(2)}(x_1,x_2,x_3,t)  = (A e^{-t} \sin( x_2), 0,0)$. Then if $A$ is chosen such that $A e^{-T } > 2 \norm{v_0}_{L^2}$, we have $\norm{v(T)}_{L^2} = \norm{u^{(2)}(T)}_{L^2} > \norm{v_0}_{L^2} \geq \norm{u^{(1)}(T)}_{L^2}$. Therefore $v$ is a weak solution to \eqref{eq:NSE} with datum $v_0$, but $v$ is not equal to the smooth solution $u^{(1)}$ at time $T$. 

While for the above argument we have considered $v_0 \in \dot{H^3}$, it is clear that Theorem~\ref{thm:RT} also implies the non-uniqueness of weak solutions to the Cauchy problem for \eqref{eq:NSE:*} for any initial datum for which one has unique local in time solvability of \eqref{eq:NSE:*} (examples include $v_0 \in \dot{H}^{1/2}$ cf.~\cite{FujitaKato64}, $v_0 \in L^3$ with zero mean cf.~\cite{Kato84}; $v_0 \in BMO^{-1}$ which is small and has zero mean cf.~\cite{KochTataru01}; see~\cite{LemarieRieusset02} for further details). Indeed, for any such initial datum the unique local in time solution $u^{(1)}$ is smooth in positive time, and hence for any $\eps>0$ we have $u^{(1)}(\cdot,\eps) \in \dot{H}^3$. We then apply Theorem~\ref{thm:RT} on the time interval $[\eps,T]$, rather than $[0,T]$, in order to glue the strong solution to a shear flow with  kinetic energy which is either strictly larger, or strictly less at time $T$. 
\end{remark}

\subsection{Background}

We make a few comments concerning different notions of solutions to the Navier-Stokes equation, other than  from those in Definition~\ref{def:weak} (see~\cite{Lemarie16} for a more detailed discussion). The weakest notion of solution to the Cauchy problem for \eqref{eq:NSE:*} is that of a \emph{very weak solution}: these are distributional solutions of \eqref{eq:NSE:*} which only lie in $C^0_{\rm weak}(0,T;L^2)$, and are weakly divergence free. However, one typically proves the existence of solutions which are stronger than this. 

Indeed, for any $L^2$ initial datum $v_0$, Leray~\cite{Leray34} constructed a distributional solution $v \in C^0_{\rm weak}(0,\infty;L^2) \cap L^2(0,\infty;\dot{H}^1)$, and obeys the energy inequality $\norm{v(t)}_{L^2}^2 + 2 \int_s^t \norm{\nabla v(\tau)}_{L^2}^2 d\tau \leq \norm{v(s)}_{L^2}^2$ for a.e.\ $s\geq 0$, and all $t > s$. See also the work of Hopf~\cite{Hopf51} on bounded domains. These are the \emph{Leray-Hopf weak solutions}. One nice feature of Leray-Hopf weak solutions is that they possess epochs of regularity, i.e.\ many time intervals on which they are smooth. In fact, already Leray~\cite{Leray34} made the observation that these weak solutions are almost everywhere in time smooth, since the  putative \emph{singular set of times} $\Sigma_T$ has Hausdorff dimension $\leq \sfrac 12$. This fact follows directly from two ingredients: the fact that for $v_0 \in H^1$ the maximal time of existence of a unique smooth solution is bounded from below by $c \norm{v_0}_{H^1}^{-4}$, and a Vitali-type covering lemma which may be combined with the $L^2_t H^1_x$ information provided by the energy inequality. Scheffer~\cite{Scheffer76} went further to prove that the $\sfrac 12$-dimensional Hausdorff measure of $\Sigma_T$ is $0$. These results were strengthened to bounds on the box-counting dimension for $\Sigma_T$, cf.~\cite{RobinsonSadowski07,Kukavica09}. See~\cite{Lemarie16,RSR16} for further references. 

\begin{remark}[\bf Weak solutions with partial regularity in time]
We note that while the weak solutions constructed in Theorem~\ref{thm:RT} are not Leray-Hopf, they give the {first example} of a mild/weak solution to the Navier-Stokes equation whose singular set of times $\Sigma_T \subset (0,T]$ is both \emph{nonempty}, and has Hausdorff (in fact, box-counting) dimension {\em strictly less than $1$}. This is in contrast with the prior work~\cite{BV}, where $\Sigma_T$ has dimension $1$. It is in an interesting open problem to construct weak solutions to \eqref{eq:NSE:*}, in the sense of Definition~\ref{def:weak}, where the $\sfrac 12$-dimensional Hausdorff measure of the nonempty set of singular times is $0$.
\end{remark}

A fundamental step towards understanding the uniqueness and smoothness of weak solutions was to introduce the concept of a \emph{suitable weak solution}, by Scheffer~\cite{Scheffer76} and Caffarelli-Kohn-Nirenberg~\cite{CaffarelliKohnNirenberg82}.  Suitable weak solutions obey a localized in space-time version of the energy inequality, and they have partial regularity in space and time: the putative singular set of points in space-time has $1$-dimensional parabolic Hausdorff measure is $0$. See the reviews~\cite{RSR16,Lemarie16} for more recent extensions and further references.

The uniqueness of suitable weak solutions or of Leray-Hopf weak solutions is an outstanding open problem. The \emph{weak-strong uniqueness} result of Prodi-Serrin~\cite{Prodi59,Serrin62} states that if there exists a weak/mild solution $v \in L^\infty_t L^2_x \cap L^2_t \dot{H}^1_x \cap L^p_t L^q_x$ of the Cauchy problem for \eqref{eq:NSE:*}, with $\sfrac 2p + \sfrac 3q \leq 1$\footnote{The $L^p_t L^q_x$ norm, for $\sfrac 2p + \sfrac 3q = 1$,  is invariant under the Navier-Stokes scaling map $v(x,t) \mapsto v_\lambda(x,t) = \lambda v(\lambda x,\lambda^2 t)$. Spaces that obey these properties are called \emph{scaling critical} spaces. Since the Leray-Hopf energy space $L^\infty_t L^2_x \cap L^2_t \dot{H}^1_x$ obeys $\sfrac 2\infty + \sfrac 32 = \sfrac 22 + \sfrac 36 = \sfrac 32 > 1$, we may call the system \eqref{eq:NSE:*} \emph{energy supercritical}. } and $p<\infty$, and if $u$ is a Leray-Hopf weak solution with the same initial datum, then $u\equiv v$. This is a conditional uniqueness result within the class of Leray-Hopf weak solutions. Moreover, the solutions are smooth in positive time~\cite{Lady67}. The $L^\infty_t L^3_x$ endpoint was established in~\cite{EscauriazaSerginSverak03}. Similar weak-strong uniqueness results hold within the class of mild solutions, except the $q=3$ endpoint which requires continuity in time~\cite{FabesJonesRiviere72,FurioliLemarieRieussetTerraneo00,LionsMasmoudi01}. See~\cite[Chapter 12]{Lemarie16} for further references. A very interesting conjecture of Jia-\v Sver\'ak~\cite{JiaSverak14,JiaSverak15} essentially states that the Prodi-Serrin uniqueness criteria are sharp, and that the non-uniqueness of Leray-Hopf weak solutions may already be expected in the regularity class $L^\infty_t L^{3,\infty}_x$. Compelling numerical evidence in support of this conjecture was recently provided by Guillod-\v Sver\'ak~\cite{GuillodSverak17}.  A related interesting open problem is to establish the non-uniqueness of mild/weak solutions to \eqref{eq:NSE:*} in the regularity class $C^0_t L^{q}_x \cap L^2_t H^1_x$, for any $q \in [2, 3)$.

We conclude this subsection by revisiting the non-uniqueness result of Remark~\ref{rem:nonunique:1}, for rough initial datum:

\begin{remark}[\bf Non-uniqueness of very weak solutions for any $L^2$ initial datum]
\label{rem:nonunique:2}
If instead of the weak solutions of Definition~\ref{def:weak} we consider \emph{very weak solutions} of \eqref{eq:NSE:*}, so they only lie in $C^0_{\rm weak}(0,T;L^2)$, then Theorem~\ref{thm:RT} implies that the non-uniqueness for the Cauchy problem holds for any $L^2$ initial datum of zero mean, within the class of very weak solutions. Indeed, for any such datum, by the work of Leray there exists at least one very weak solution $u$ to the Cauchy problem for \eqref{eq:NSE:*}, which in fact is smooth most of the time. Pick any regular time $t_0>0$ of $u$, and let $v_0 = u(t_0) \in \dot{H}^3$. We then apply the argument of Remark~\ref{rem:nonunique:1} on the time interval $[t_0,t_0+T]$, with $u^{(1)}$ being the unique local in time smooth solution of \eqref{eq:NSE:*} with initial datum $v_0$ at time $t_0$. Note that by weak-strong uniqueness we in fact have that the Leray solution $u$ is equal to $u^{(1)}$ on $[t_0,t_0+T]$. In view of Theorem~\ref{thm:RT} we can construct a very weak solution $v$ which is equal to $u$ on $[0,t_0 + \sfrac T3]$, and equal to a shear flow of our choice on $[t_0+\sfrac{2T}{3},T]$. This solution $v$ is smooth except for a set of times of Hausdorff dimension $<1$, and is different from the Leray solution $u$.
\end{remark}

\subsection{The energy supercritical hyperdissipative Navier-Stokes equation}
The proof of Theorem~\ref{thm:RT} uses essentially that the kinetic energy space is supercritical with respect to the natural scaling invariance associated to \eqref{eq:NSE:*}. In fact, the proof applies \emph{mutatis mutandis} to the energy supercritical $\alpha$-hyperdissipative Navier-Stokes equation
\begin{subequations}\label{eq:NSE}
\begin{align}
\partial_t v+ \div (v \otimes  v ) +\nabla p + (- \Delta)^\alpha v &= 0\\
\div v &= 0 \\
v|_{t=0} &= v_0\,.
\end{align}
\end{subequations}
Here we consider the \emph{energy supercritical regime} $\alpha \in [1,5/4)$. Indeed, \eqref{eq:NSE} is invariant under the scaling map $v(x,t) \mapsto v_\lambda(x,t) = \lambda^{2\alpha-1} v(\lambda x,\lambda^{2\alpha} t)$, and the energy norm $L^\infty_t L^2_x$ is invariant under this map for $\alpha = \sfrac 54$. Definition~\ref{def:weak}, with $\Delta \varphi$ replaced by $- (-\Delta)^{\alpha} \varphi$, gives the notion of a weak solution for  \eqref{eq:NSE}. Our result is:

\begin{theorem}[\bf The hyperdissipative problem]
\label{thm:main}
For $\alpha \in [1,\sfrac 54)$ there exists $\beta = \beta(\alpha) > 0$ such that Theorem~\ref{thm:RT}, and thus also Remark~\ref{rem:nonunique:1}, holds with system~\eqref{eq:NSE:*} replaced by the more general system~\eqref{eq:NSE}.
\end{theorem}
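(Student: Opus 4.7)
The plan is to repeat the proof of Theorem~\ref{thm:RT} essentially line by line, tracking every place where the dissipation $-\Delta$ enters and verifying that the corresponding $\alpha$-hyperdissipative estimates are compatible with the same convex integration scheme. That scheme, as sketched in Section~\ref{sec:outline}, consists of: (i) a local well-posedness result for smooth data; (ii) a gluing procedure that concatenates exact smooth solutions across small time gaps using cutoffs, producing a controllable Reynolds stress; and (iii) a Nash-type iteration based on intermittent (jet/Mikado) building blocks that drives the Reynolds stress to zero in the right norms. Each ingredient has to be re-verified for the operator $(-\Delta)^\alpha$ in place of $-\Delta$.

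First I would establish the analog of Proposition~\ref{prop:local:existence}: for $v_0 \in \dot{H}^3$ of zero mean, there is a unique smooth solution of~\eqref{eq:NSE} on $[0,T]$ with $T \gtrsim \|v_0\|_{H^3}^{-c_\alpha}$ for some $c_\alpha > 0$. Since $\alpha \geq 1$, the hyperdissipation is at least as strong as the classical Laplacian, and the usual energy method goes through with only notational changes. Next, in the gluing step of Section~\ref{sec:gluing}, the cutoff-induced stress is controlled through heat-semigroup estimates; for the hyperdissipative kernel one has $\|e^{-t(-\Delta)^\alpha} f\|_{H^{s}} \lesssim t^{-(s-s')/(2\alpha)} \|f\|_{H^{s'}}$, which yields the same structural bounds, just with exponents now depending on $\alpha$. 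The parabolic time scale of each gluing window is accordingly rescaled from $\tau_q^{1/2}$ to $\tau_q^{1/(2\alpha)}$.

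The one substantive adjustment is in the inductive convex integration step of Section~\ref{sec:stress}. A building block of characteristic frequency $\lambda_{q+1}$ and amplitude $a_{q+1}$ produces, when hit with $(-\Delta)^\alpha$, a linear dissipative error of order $\lambda_{q+1}^{2\alpha} a_{q+1}$ (versus $\lambda_{q+1}^{2}a_{q+1}$ in the classical case), while the nonlinear oscillation gain in the new stress $R_{q+1}$ is unchanged. In the stack of inequalities relating $\lambda_q, \lambda_{q+1}$, the intermittency dimension of the blocks, the amplitudes, and the regularity exponent $\beta$, one must rebalance so that this dissipative error absorbs into $R_{q+1}$. A direct bookkeeping of the exponents reveals that the feasibility region reduces to a strict inequality of the form $\alpha < 5/4 - \delta(\beta)$ with $\delta(\beta) \to 0$ as $\beta \to 0$, and hence a sufficiently small $\beta = \beta(\alpha) > 0$ exists precisely on the energy-supercritical range $\alpha \in [1,\sfrac 54)$, consistently with the scaling $v_\lambda(x,t)=\lambda^{2\alpha-1}v(\lambda x,\lambda^{2\alpha} t)$ in which $L^\infty_t L^2_x$ becomes critical at $\alpha = \sfrac 54$.

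The hard part will be this final parameter balance. One cannot simply translate exponents, since the gap between $\lambda_{q+1}^{2\alpha}$ and the oscillation gain shrinks as $\alpha \nearrow \sfrac 54$, and the intermittency dimensions must simultaneously respect the constraint coming from the partial-regularity conclusion (the Hausdorff dimension bound $1-\beta$ on $\Sigma_T$, obtained from a Vitali-type covering combined with the hyperdissipative blow-up criterion that already forces the exponent $1-\beta$). I expect the verification to be mechanical but delicate: one has to redo the chain of inequalities of Sections~\ref{sec:gluing}--\ref{sec:stress} with every exponent now indexed by $\alpha$, and confirm that the admissible region in $(b,\beta)$-parameter space remains nonempty uniformly for $\alpha$ in any closed subinterval of $[1,\sfrac 54)$, which then supplies the $\beta(\alpha)>0$ asserted in the statement.
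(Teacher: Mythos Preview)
Your plan is essentially the paper's approach: Sections~\ref{sec:outline}--\ref{sec:stress} are written from the outset for general $\alpha\in[1,\sfrac54)$, with Theorem~\ref{thm:RT} recovered by setting $\alpha=1$; so there is no separate $\alpha=1$ proof to modify, and your description of tracking the $\alpha$-dependence through local well-posedness, gluing with fractional-heat smoothing $t^{-N/(2\alpha)}$, and the linear dissipative error $\sim\lambda_{q+1}^{2\alpha-1}$ in the stress is accurate.

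One point to correct: the Hausdorff (box-counting) bound on $\Sigma_T$ is \emph{not} obtained from a blow-up criterion plus a Vitali covering---that is the Leray/Scheffer argument for Leray--Hopf solutions, and these weak solutions are not Leray--Hopf. Here the bound is purely combinatorial and built into the iteration: the bad sets $\BB^{(q)}$ are defined explicitly in the gluing step, their measures contract by the factor $\tau_q/\vartheta_q=\lambda_{q-1}^{-\eps_R/4}$ at each stage (property~\ref{eq:cond:iv}), and the dimension estimate~\eqref{e:box_count} follows by elementary box-counting. The parameter $\alpha$ does not enter that computation at all, only $\eps_R$, $b$, and $\beta$; so this part needs no $\alpha$-adaptation, and your reference to a ``hyperdissipative blow-up criterion'' should be dropped.
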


The system \eqref{eq:NSE} was first considered by Lions in \cite{Lions59,LionsJ2} for $\alpha$ in the critical and subcritical regime $\alpha \geq \sfrac 54$. Lions proved the existence and uniqueness of Leray-weak solutions, for any $L^2$ initial datum. These solutions are regular in positive time. In~\cite{Tao} it was proven that slightly below the critical threshold $\alpha= \sfrac 54$ the existence of a globally regular solution still holds when the right-hand side of the first equation in \eqref{eq:NSE} is replaced by a logarithmically supercritical operator.  For $\alpha \in [\sfrac 34,1)$ and $(1,\sfrac 54)$ partial regularity results \`a la Caffarelli-Kohn-Nirenberg were established in~\cite{KatzPavlovic02,TangYu} and \cite{CDLM17}. These works show the existence of a weak solution whose putative singular set (in space-time) has $(5-4\alpha)$-dimensional Hausdorff measure $0$. In the opposite direction, the recent works~\cite{CDLDR17,DR08} prove the non-uniqueness of Leray-weak solutions to \eqref{eq:NSE} in the parameter ranges $\alpha < \sfrac 15$, respectively $\alpha < \sfrac 13$. The non-uniqueness of weak solutions in the sense of Definition~\ref{def:weak} is also shown to hold for $\alpha < \sfrac 12$.

We note that very recently, by adapting the arguments in \cite{BV}, Luo and Titi~\cite{LuoTiti18} demonstrated the non-uniqueness of very weak solutions for \eqref{eq:NSE} in the parameter range $\alpha \in (1,\sfrac 54)$. When compared to~\cite{LuoTiti18} the weak solutions constructed in this paper have the additional property that their set of singular times has Hausdorff dimension strictly less than $1$. Together, the uniqueness  result~\cite{LionsJ2}, the non-uniqueness results of~\cite{LuoTiti18}, and of this work, confirm the well-posedness \emph{criticality} of the exponent $\alpha = \sfrac 54$, within the class of weak solutions defined in Definition~\ref{def:weak}.

We give the proof of Theorem~\ref{thm:main} for general values of $\alpha < \sfrac 54$. Theorem~\ref{thm:RT} follows by restricting to $\alpha=1$.

\section{Outline of the proof}
\label{sec:outline}

The proof of Theorem \ref{thm:main} proceeds via a convex integration scheme based on the scheme introduced  in \cite{BV}, which is itself built on a long line of work initiated by De Lellis and Sz\'ekelyhidi Jr.~\cite{DeLellisSzekelyhidi13}, culminating in  the eventual resolution of Onsager's conjecture by Isett \cite{Isett16} (cf.\ \cite{DeLellisSzekelyhidi12a, BDLISZ15, Buckmaster15,BDLSZ16,DaneriSzekelyhidi17,BDLSV17,Isett17}). Such a scheme is used to inductively define a sequence of approximate solutions, converging to a weak solution of \eqref{eq:NSE}. The principal new idea of this paper is to create \emph{good regions} in time where the approximate solutions are strong solutions to  \eqref{eq:NSE} and are untouched in later inductive steps. This is achieved by employing the method of gluing introduced by Isett~\cite{Isett16} (cf.\ \cite{BDLSV17}). Taking the countable union of the good regions over each inductive step, one forms a fractal set, whose complement has Hausdorff dimension strictly less than $1$. This is explained in detail in Section \ref{sec:inductive} and \ref{ss:gluing_stage} below. The concept of good regions is partially inspired by similar concepts introduced in \cite{Buckmaster15} (cf.\ \cite{BDLSZ16}). An additional novelty of the present work is the introduction of \emph{intermittent jets} which replace the intermittent Beltrami flows of \cite{BV} as the fundamental building blocks on which the convex integration scheme in based (see Section \ref{ss:convex_integration} and \ref{s:intermittent}).

\subsection{Inductive estimates and main proposition}
\label{sec:inductive}

For every index $q \in \mathbb N$ we will construct a solution $(v_q,\RR_q)$ to the Navier-Stokes-Reynolds system
\begin{subequations}
\label{eq:NSE:Reynolds}
\begin{align}
\partial_t v_q + \div (v_q\otimes v_q)+\nabla p_q + (- \Delta)^\alpha v_q &= \div \RR_q\\
\div v_q &= 0\,,
\end{align}
\end{subequations}
where $\RR_q$ is a trace-free symmetric matrix. The pressure $p_q$ is normalized to have zero mean on $\T^3$ and is explicitly given by the formula 
\begin{align}
p_q = \div \Delta^{-1} \div (\RR_q- v_q\otimes v_q)\,.
\label{eq:pq:def}
\end{align}
Here we use the convention that for a $2$ tensor $S = (S^{ij})_{i,j=1}^{3}$ the divergence contracts on the second component, i.e.\ $(\div S)^i = \partial_j S^{ij}$. The summation convention on repeated indices is used throughout.
 
Fix a sufficiently large integer $b = b(\alpha)>0$.\footnote{For instance, it is sufficient to take $ b (5-4\alpha) \geq 1000$, which verifies \eqref{eq:new:beta:b}.} Depending on this choice of $b$, fix a sufficiently small parameter $\beta = \beta(\alpha,b)>0$.\footnote{For instance, it is sufficient to require that $200 \beta b^2 \leq 5-4\alpha$. This verifies both~\eqref{eq:new:beta:b} and~\eqref{eq:main:constraint:b}.} In particular, $\beta b \ll 1$.

The size of the Reynolds stress $\RR_q$ will be measured in terms of a size parameter
\begin{align}
\delta_q = \lambda_1^{3\beta} \lambda_q^{-2\beta}
\label{eq:delta:q:def}
\end{align}
where $\lambda_q$ is a frequency parameter defined by 
\begin{align}\notag
\lambda_q = a^{(b^q)}
\end{align}
where  $a \gg 1$ is an  large real number to be chosen later. Note that $\delta_1 = \lambda_1^\beta = a^{\beta b}$ is large if $a$ is sufficiently large. 

For every $q\geq 0$ we assume that $\RR_q$ obeys the estimates  
\begin{subequations}
\label{eq:Rq:inductive}
\begin{align}
\norm{\RR_q}_{L^1(\T^3)} &\leq \lambda_{q}^{-\eps_R} \delta_{q+1} 
\label{eq:Rq:L1}\\
\norm{ \RR_q}_{H^3(\T^3)}  & \leq  \lambda_q^{7} 
\label{eq:Rq:H3:x}
\end{align}
\end{subequations}
for some $\eps_R>0$ to be chosen later, which depends only on the values of $\alpha$, $\beta$, and $b$.
For the approximate velocity field $v_q$, we assume that it obeys the estimates  
\begin{subequations}
\begin{align}
\norm{v_q}_{L^2(\T^3)} &\leq 2 \delta_0^{\sfrac 12} - \delta_q^{\sfrac 12}
\label{eq:v:L2:x} 
\\
\norm{v_q}_{H^3(\T^3)} &\leq  \lambda_q^{4}  \, .
\label{eq:v:H3:x} 
\end{align}
\end{subequations}
These inductive estimates will ensure that the approximate solutions $v_q$ converge strongly in $C^0(0,T;L^2)$ to a weak solution $v$ of the Navier-Stokes equation \eqref{eq:NSE}.

Consider $T>0$ and fix the parameter sequences $\{ \tau_q \}_{q\geq 0}$ and $\{ \vartheta_q \}_{q\geq 1}$  defined in \eqref{eq:theta:q:def} and \eqref{eq:tau:q:def} below, which obey the heuristic bounds
 \begin{align}
 \vartheta_{q+1} \ll \tau_q \ll \vartheta_q \ll 1 \, .
 \label{eq:tau:q:mu:q:assume:0}
 \end{align}
In particular, for $q\geq 1$ we make the choices 
\begin{align}
\vartheta_{q} =  \lambda_{q-1}^{-7} \delta_{q}^{\sfrac 12}
\label{eq:theta:q:def}
\end{align}
and 
\begin{align}
\tau_{q} = \vartheta_{q} \lambda_{q-1}^{- \sfrac{\eps_R}{4}} = \lambda_{q-1}^{-7 - \sfrac{\eps_R}{4}} \delta_q^{\sfrac 12}\, .
\label{eq:tau:q:def}
\end{align}
For the special case $q=0$ we set 
\[\tau_0:=\frac{T}{15}\,.\]
For $\vartheta_0$ we do not need to assign a value.

In order to ensure that the singular set of times has Hausdorff dimension strictly less than $1$, at every $q\geq 0$ we split the interval $[0,T]$ into a closed {\em good set} $\GG^{(q)}$ and an open {\em bad set} 
\[
\BB^{(q)} = [0,T] \setminus \GG^{(q)}
\] 
which obey the following properties:
\begin{enumerate}[label=(\roman*)]
 \item \label{eq:cond:i} $\GG^{(0)} = [0,\sfrac T3] \cup [\sfrac{2T}{3} ,T]$.
 \item \label{eq:cond:ii} $\GG^{(q-1)} \subset \GG^{(q)}$ for every $q\geq 1$.
 \item \label{eq:cond:iii} $\BB^{(q)}$ is a finite union of disjoint open intervals of length $5 \tau_q$.\footnote{Observe that this condition is consistent with property \ref{eq:cond:i} and the definition $\tau_0=\sfrac{T}{15}$.}
 \item \label{eq:cond:iv} For $q\geq 1$, the bad sets have measures which obey
 \begin{align}
|\BB^{(q)}| \leq |\BB^{(q-1)}| \frac{10 \tau_q}{\vartheta_q}\, .
\label{eq:mead:2}
\end{align}
\item \label{eq:cond:v} The velocity fields obey
\begin{align}
\mbox{if} \quad t \in \GG^{(q')} \quad \mbox{for some} \quad q' < q, \quad \mbox{then} \quad v_q(t) = v_{q'}(t)\,.
\label{eq:vq:unchanged}
\end{align}
\item \label{eq:cond:vi} The residual Reynolds stress obeys
 \begin{align}
\RR_q(t) = 0 \quad \mbox{for all} \quad t\in [0,T] \quad \mbox{such that} \quad {\rm dist}(t, \GG^{(q)}) \leq \tau_q\,.
\label{eq:Rq:time:support}
\end{align} 
\end{enumerate}

Due to \eqref{eq:Rq:time:support} and the parabolic regularization of the Navier-Stokes equation (cf.~\eqref{eq:v:H:3:N} below) we have that $v_q$ is a $C^\infty$ smooth exact solution of the Navier-Stokes equation on $\GG^{(q)}$.
In addition, \eqref{eq:vq:unchanged} implies that $v = v_q$ on $\GG^{(q)}\setminus \{0\}$, and thus the limiting solution $v$ is $C^\infty$ smooth on $(\GG^{(q)}\setminus \{0\}) \times \T^3$. This justifies that the {\em singular set of times} $\Sigma_T$ obeys
\begin{align}
\Sigma_T \subset \bigcap_{q\geq 0} \BB^{(q)}.
\label{eq:mead:1}
\end{align}
It thus follows from \eqref{eq:mead:2} and the definitions of $\tau_q$ and $\vartheta_q$ in \eqref{eq:theta:q:def} and \eqref{eq:tau:q:def} that 
\begin{align}
|\BB^{(q)}| \leq |\BB^{(0)}| \prod_{q'=1}^{q}  \frac{10 \tau_{q'}}{\vartheta_{q'}} \leq 10^q T \prod_{q'=0}^{q-1}    \lambda_{q'}^{-\sfrac{\eps_R}{4}}    \leq T 10^q  a^{- \frac{\eps_R (b^q - 1)}{4 (b-1)}} \leq T 10^q  \lambda_q^{- \frac{\eps_R}{8 (b-1)}} \, .
\label{eq:mead:22}
\end{align}
Here we have also used the definition of $\lambda_q$, and the fact that $b>2$. 
To estimate the box-counting (Minkowski) dimension of $\Sigma_T$, we note that for every $q\geq0$, the set $\Sigma_T$ is covered by $\BB^{(q)}$, which itself consists of disjoint intervals of length $5 \tau_q$. Due to \eqref{eq:mead:22}, the number of such intervals is at most
\begin{align}\notag
T 10^q  \lambda_q^{-\frac{\eps_R}{8 (b-1)}} (5\tau_q)^{-1}
\end{align}
By \eqref{eq:mead:1}, and the super-exponential growth of $\lambda_q$, we conclude that 
\begin{align}
{\rm dim}_{\rm box} (\Sigma_T) 
&\leq  \lim_{q \to \infty} \frac{\log (T) + q \log(10) - \frac{\eps_R }{8 (b-1)} \log\left( \lambda_{q} \right) - \log(5\tau_q)}{- \log \left(5 \tau_q \right)} 
\notag\\
&= 1 - \lim_{q\to \infty}\frac{\frac{\eps_R b}{8 (b-1)} \log\left( \lambda_{q-1} \right) }{- \log \left(\tau_q \right)} \notag\\
&= 1 - \frac{\eps_R b}{8 (b-1)(7 + \sfrac{\eps_R}{4} + \beta b)} < 1 - \frac{\eps_R}{64}   < 1  \,.\label{e:box_count}
\end{align} 
This implies that $\Sigma_T$ also has  box-counting dimension strictly less than $1$.

\begin{proposition}[Main Iteration Proposition]
\label{prop:main}
There exists  a sufficiently small parameter $\eps_R = \eps_R(\alpha, b,\beta) \in (0,1)$ and a sufficiently large parameter $a_0 = a_0(\alpha, b, \beta, \eps_R) \geq 1$
such that for any $a \geq a_0$ satisfying the technical condition \eqref{e:integer_constraint}, the following holds: Let $(v_q,\mathring R_q)$ be a pair solving the Navier-Stokes-Reynolds system \eqref{eq:NSE:Reynolds} in $\mathbb T^3\times [0,T]$ satisfying the inductive estimates \eqref{eq:Rq:L1}--\eqref{eq:v:H3:x}, and a corresponding set $\GG^{(q)}$ with the properties~\ref{eq:cond:i}--\ref{eq:cond:vi} listed above. Then there exists a second pair $(v_{q+1},\mathring R_{q+1})$  solving   \eqref{eq:NSE:Reynolds} and a set $\GG^{(q+1)}$ which satisfy \eqref{eq:Rq:L1}--\eqref{eq:v:H3:x} and \ref{eq:cond:i}--\ref{eq:cond:vi} with $q$ replaced by $q+1$. In addition we have that
\begin{equation}\label{e:interative_v}
\norm{v_{q+1}-v_q}_{L^{2}}\leq  \delta_{q+1}^{\sfrac12}\,.
\end{equation}
\end{proposition}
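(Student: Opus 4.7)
The plan is to build $(v_{q+1}, \RR_{q+1})$ in two stages, first a \emph{gluing stage} producing an intermediate pair $(\vo, \RRO)$, and then a \emph{perturbation stage} adding an intermittent-jet corrector $w_{q+1}$ to obtain $v_{q+1} = \vo + w_{q+1}$. The design of the stages is tied to the construction of $\GG^{(q+1)}$. First I would cover each connected component $I \subset \BB^{(q)}$ by overlapping subintervals of length $\vartheta_{q+1}$ and, on a sufficiently dense but regularly spaced subset of gluing times $\{t_i\}$, solve the Navier--Stokes Cauchy problem exactly with data $v_q(t_i)$; the $H^3$ bound \eqref{eq:v:H3:x} together with the local well-posedness statement (Proposition~\ref{prop:local:existence}) guarantees existence and uniform $H^3$ control on time intervals of length $\sim \vartheta_{q+1}$, which is the purpose of the choice \eqref{eq:theta:q:def}. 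Gluing these strong solutions $v_i$ with a smooth partition of unity of the time variable, and correcting by a suitable divergence-free remainder, gives $\vo$ whose associated stress $\RRO$ is supported in time on disjoint intervals of length at most $\tau_{q+1}$ centered at the gluing times $t_i$, with the rest of each bad component containing an exact strong solution.

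Then I would define $\GG^{(q+1)}$ to be $\GG^{(q)}$ together with those subintervals of each $I\subset\BB^{(q)}$ on which $\RRO\equiv 0$ (these are the new "good regions" inherited from the gluing). By spacing the gluing times $t_i$ with separation $\vartheta_{q+1}$ and using cutoffs of width $\tau_{q+1}$ one gets the measure bound $|\BB^{(q+1)}| \leq |\BB^{(q)}|(10\tau_{q+1}/\vartheta_{q+1})$, which is exactly \eqref{eq:mead:2}, and also enforces properties \ref{eq:cond:i}--\ref{eq:cond:iv} at level $q+1$. Property \ref{eq:cond:vi} at level $q+1$ will then follow by construction, since $\RRO$ (and consequently $\RR_{q+1}$ after perturbation) vanishes on a $\tau_{q+1}$-neighborhood of $\GG^{(q+1)}$; I would use a temporal cutoff $\chi$ supported away from $\GG^{(q+1)}$ to localize the perturbation and thereby preserve \ref{eq:cond:v}.

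In the perturbation stage, on the support of $\RRO$ I would add a divergence-free field of the form
\begin{equation*}
w_{q+1} = \sum_\xi a_\xi(t,x) \WW_\xi(\lambda_{q+1} x) + (\mbox{corrector}),
\end{equation*}
where $\{\WW_\xi\}$ are the intermittent jet building blocks announced in Section~\ref{s:intermittent} and the amplitudes $a_\xi$ are obtained by applying a geometric lemma to $\Id - \RRO/\rho$, with $\rho$ a suitable scalar pointwise dominating $|\RRO|$. The standard identity for such building blocks produces $\div(w_{q+1}\otimes w_{q+1}) = -\div \RRO + \mbox{high-frequency error}$, and one applies the inverse divergence $\mathcal R$ to the oscillation/transport/Nash errors to define $\RR_{q+1}$. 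The inductive estimates \eqref{eq:Rq:L1}--\eqref{eq:v:H3:x} with $q$ replaced by $q+1$, as well as \eqref{e:interative_v}, are then verified by balancing intermittency parameters (so that $L^2$ norms of the building blocks are $O(1)$ but $L^p$ norms for $p<2$ are small), chosen so that the energy supercriticality margin $\sfrac 54 - \alpha > 0$ absorbs the cost of the dissipation $(-\Delta)^\alpha w_{q+1}$.

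The main obstacle will be the $L^1$ estimate \eqref{eq:Rq:L1} at level $q+1$. There are three contributions to $\RR_{q+1}$: a \emph{gluing error} from $\RRO$ (which roughly scales like $\vartheta_{q+1}$ times derivatives of $v_i$ and thus uses the hyperdissipative parabolic gain together with the choice \eqref{eq:theta:q:def}), a \emph{linear/dissipation error} of the form $\mathcal R (-\Delta)^\alpha w_{q+1}$ (this is where one needs $\alpha<\sfrac 54$, so that the $L^1$-norm of the dissipation error decays in $\lambda_{q+1}$ with a positive power), and a high-frequency \emph{oscillation error} from $\div(w_{q+1}\otimes w_{q+1}+\RRO)$ (which uses the intermittent $L^1$ smallness of the building blocks together with an inverse-divergence gain $\lambda_{q+1}^{-1}$). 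Each must be bounded by $\lambda_{q+1}^{-\eps_R}\delta_{q+2}$, and the delicate point is to choose the intermittency exponents, the factor $\eps_R$, and the constraint on $b$ and $\beta$ (notably \eqref{eq:new:beta:b} and \eqref{eq:main:constraint:b}) so that all three errors simultaneously close with the same universal $\eps_R$. The $H^3$ bound \eqref{eq:Rq:H3:x} on $\RR_{q+1}$ and \eqref{eq:v:H3:x} on $v_{q+1}$ are much less tight and will hold with plenty of room provided $a$ is taken large.
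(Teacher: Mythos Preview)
Your proposal is correct and follows essentially the same two-stage architecture as the paper: a gluing step producing $(\vo,\RRO)$ and the set $\GG^{(q+1)}$ with the required properties (Proposition~\ref{prop:bar:vq:Rq}), followed by an intermittent-jet perturbation step (Sections~\ref{sec:perturbation}--\ref{sec:stress}) that cancels $\RRO$ and yields the new pair. Two small points of precision worth noting: the ``corrector'' in your perturbation must include not only a divergence corrector $w_{q+1}^{(c)}$ but also a \emph{temporal} corrector $w_{q+1}^{(t)}$ (see \eqref{e:temporal_corrector} and \eqref{eq:useful:2}), since the intermittent jets are time-dependent and $\div(W_{(\xi)}\otimes W_{(\xi)})$ is a time derivative; and your ``gluing error from $\RRO$'' is not a separate contribution to $\RR_{q+1}$ but rather the $L^1$ bound on $\RRO$ that feeds into the oscillation error --- the actual error decomposition in the paper is linear/corrector/oscillation as in \eqref{eq:tilde:R:1}.
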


\subsection{Gluing stage}\label{ss:gluing_stage}
The first stage of proving Proposition~\ref{prop:main} is to start from the approximate solution $(v_q,\RR_q)$ which obeys \eqref{eq:Rq:L1}--\eqref{eq:v:H3:x} and \eqref{eq:Rq:time:support}, and construct a new \emph{glued} pair $(\overline v_q, {\mathring {\bar R}}_{q})$, which solves \eqref{eq:NSE:Reynolds}, obeys bounds which are the same as \eqref{eq:Rq:L1}--\eqref{eq:v:H3:x} up to a factor of $2$, but which has the advantage that ${\mathring {\bar R}}_{q} \equiv 0$ on $\T^3 \times \BB^{(q+1)}$.  

Specifically, the  new velocity field $\overline v_q$ is defined as
\[
\overline v_q (x,t)= \sum_{i} \eta_i(t) v_i(x,t)\,,
\]
where the $\eta_i$ are certain cutoff functions with support in the intervals $[t_i, t_{i+1}+\tau_{q+1}]$ (with $t_i = \vartheta_{q+1} i$) that form a partition of unity (see \eqref{e:chi:partition} below), and the $v_i$ are exact solutions of the Navier-Stokes equation with initial datum given by $v_i(t_{i-1})=v_q(t_{i-1})$. Due to parabolic regularization, these exact solutions $v_i$ are  $ C^\infty$ smooth in space and time on the support of $\eta_i$, so that $\overline v_q$ inherits this $C^\infty$ regularity.  This is in contrast to $(v_q,\RR_q)$, which is only assumed to be $ H^3$ smooth. Trivially, in the regions where a cut-off $\eta_i$ is identically $1$, $\overline v_q$ is an exact solution to  \eqref{eq:NSE}.

Observe that property \eqref{eq:Rq:time:support} ensures that $v_q$ is already an exact solution of \eqref{eq:NSE} on a large subset of $[0,T]$, namely the $\tau_q$ neighborhood of $\GG^{(q)}$. In particular if $t_{i-1}$ and $t_i$ both lie within this neighborhood, then by uniqueness  of the Navier-Stokes equation in $C^0_t H^3_x$, we have $v_i=v_{i+1}=v_q$ on the overlapping region $\supp \eta_i\eta_{i+1}$. 
Hence $\overline v_q = v_q$  is an exact solution here. In order to single out overlapping regions where $\overline v_q$ is not necessarily an exact solution of \eqref{eq:NSE} we introduce the index set
\begin{align}
{\mathcal C} = \left\{ i\in \{ 1, \ldots , n_{q+1} \}  \colon    \mbox{there exists } t \in [t_{i-1},t_{i+1}+\tau_{q+1}] \mbox{ with } \RR_q(t) \neq 0 \right\} \, .
\label{eq:C:def}
\end{align}
We then define
\begin{align}
\BB^{(q+1)} = \bigcup_{i \in {\mathcal C}~\mbox{or}~i-1 \in {\mathcal C}} (t_i - 2 \tau_{q+1}, t_{i} + 3 \tau_{q+1}) \, .
\label{eq:B:q+1:def}
\end{align}
By the discussion above, it will follow that $\overline v_q$ is an exact solution on the complement of $\BB^{(q+1)}$, namely $\GG^{(q+1)}$. We prove in Section~\ref{sec:gluing} below that the above defined good and bad sets at level $q+1$ obey the postulated properties \ref{eq:cond:i}--\ref{eq:cond:iv}.

In Section~\ref{sec:gluing} we prove the following proposition:
\begin{proposition}\label{prop:bar:vq:Rq}
There exists a solution $(\overline v_q, {\mathring {\bar R}}_{q})$ of \eqref{eq:NSE:Reynolds}, such that 
\begin{align}
\overline v_q \equiv v_q \qquad \mbox{on} \qquad   \T^3 \times \GG^{(q)} \,,
\label{eq:vq:same}
\end{align} 
and moreover the velocity field $\overline v_q$ satisfies:
\begin{subequations}
\begin{align}
 \norm{\bar v_q}_{L^2} &\leq 2 \delta_0^{\sfrac 12} -\delta_q^{\sfrac 12} 
 \label{eq:vq:global:1} \\
 \norm{\bar v_q}_{H^3} &\leq 2 \lambda_q^{4}  
 \label{eq:vq:global:2} \\
 \norm{\bar v_q - v_{q}}_{L^2} &\leq  \vartheta_{q+1}\lambda_q^{6}   \leq \frac 14 \delta_{q+1}^{\sfrac 12}
 \label{eq:vq:vell} \\ % was \vartheta_{q+1}\lambda_q^{\frac{35}{6}} 
\| \partial_t^M D^N \bar v_q\|_{L^\infty(\sfrac{T}{3},\sfrac{2T}{3} ; H^3)} &\les  \tau_{q+1}^{-M} \vartheta_{q+1}^{-\frac{N}{2\alpha}} \lambda_q^{4} \les  \tau_{q+1}^{-M-N} \lambda_q^{4}
\label{eq:vq:1}
\end{align}
\end{subequations}
and the stress tensor $\mathring{\overline R_q}$ satisfies:
\begin{subequations}
\begin{align}
\mathring{\overline{R}}_q(t) &= 0  \quad \mbox{for all} \quad t \in [0,T] \quad \mbox{such that} \quad {\rm dist}(t, \GG^{(q+1)}) \leq 2 \tau_{q+1} \label{eq:bar:Rq:supp}\\
\norm{\mathring{\overline R_q}}_{ L^1} &\leq \tau_{q+1}^{-1}\vartheta_{q+1}\lambda_q^{-\sfrac{\eps_R}{2}} \delta_{q+1}  \leq \lambda_q^{-\sfrac{\eps_R}{4}} \delta_{q+1}  \label{eq:Rq:1}\\
\norm{\partial_t^M D^N\mathring{\overline R_q}}_{H^3} &\lesssim \tau_{q+1}^{-M-1}\vartheta_{q+1}^{-\frac{N}{2\alpha}}\lambda_q^{4} \les \tau_{q+1}^{-M-N-1} \lambda_q^{4} \label{eq:Rq:2}
\end{align}
\end{subequations}
for all $M,N \geq 0$.
\end{proposition}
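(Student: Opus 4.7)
My plan follows the gluing strategy of Isett~\cite{Isett16} adapted to the Navier-Stokes-Reynolds system as in~\cite{BDLSV17}. For each transition time $t_i = i\vartheta_{q+1}$ I define $v_i$ to be the strong solution of \eqref{eq:NSE} launched from $v_q(\cdot,t_{i-1})$. The bound $\norm{v_q(t_{i-1})}_{H^3}\le \lambda_q^4$ together with local well-posedness of the hyperdissipative NSE in $H^3$ provides existence on $[t_{i-1},t_{i+1}+\tau_{q+1}]$ since the choice \eqref{eq:theta:q:def} of $\vartheta_{q+1}$ lies well below the local guaranteed lifespan. Parabolic regularization then upgrades $v_i$ to $C^\infty$ in space-time away from $t_{i-1}$, with bounds of the shape $\norm{\partial_t^M D^N v_i}_{H^3}\les \vartheta_{q+1}^{-M}\vartheta_{q+1}^{-N/(2\alpha)}\lambda_q^4$ on $\supp \eta_i$. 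I pick a partition of unity $\{\eta_i\}$ with $\supp\eta_i\subset[t_i,t_{i+1}+\tau_{q+1}]$, $\eta_i+\eta_{i+1}\equiv 1$ on overlaps, transition scale $\tau_{q+1}$, and set $\bar v_q:=\sum_i\eta_i v_i$.

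Next I derive an explicit expression for $\mathring{\bar R}_q$. The algebraic identity
\[
\bar v_q\otimes\bar v_q - \sum_i\eta_i(v_i\otimes v_i) = -\sum_i\eta_i\eta_{i+1}(v_i-v_{i+1})\otimes(v_i-v_{i+1}),
\]
valid since $\eta_i+\eta_{i+1}=1$ on overlaps, combined with $\sum_i\partial_t\eta_i\equiv 0$ and the fact that each $v_i$ is an exact solution, gives
\[
\partial_t\bar v_q + \div(\bar v_q\otimes\bar v_q) + \nabla\bar p_q + (-\Delta)^\alpha\bar v_q = \sum_i\partial_t\eta_i(v_i-v_{i+1}) - \div\Bigl(\sum_i\eta_i\eta_{i+1}(v_i-v_{i+1})^{\otimes 2}\Bigr),
\]
after the pressure absorbs the trace. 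Applying the inverse-divergence operator $\RSZ$ to the first right-hand side term yields a closed formula for $\mathring{\bar R}_q$, supported only on transitions that intersect $\supp\RR_q$, i.e.\ on indices $i\in\mathcal C$. This immediately implies \eqref{eq:vq:same} (outside of transitions indexed by $\mathcal C$ a single $\eta_i=1$ and by uniqueness of NSE in $C^0_tH^3_x$ combined with \eqref{eq:Rq:time:support} one has $v_i=v_q$) and the support statement \eqref{eq:bar:Rq:supp}.

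The crucial quantitative input is a bound on the increments $v_i-v_{i+1}$. Writing $w_i:=v_i-v_q$, by construction $w_i$ solves a linearized NSE perturbation with zero datum at $t_{i-1}$ and forcing $\div\RR_q$. A linear stability/Duhamel argument, using $\norm{v_q}_{H^3}\le\lambda_q^4$ to control the advection coefficients and \eqref{eq:Rq:L1}--\eqref{eq:Rq:H3:x} for the source, produces
\[
\norm{v_i-v_{i+1}}_{L^1}\les \vartheta_{q+1}\lambda_q^{-\eps_R/2}\delta_{q+1},
\]
where the $\lambda_q^{-\eps_R/2}$ arises from an $L^1\to L^1$ (up to a small $a$-power loss) estimate for $\RSZ$ interpolated against the $H^3$ control \eqref{eq:Rq:H3:x}. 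Inserting this into the formula for $\mathring{\bar R}_q$ and using $\norm{\partial_t\eta_i}_{L^\infty}\les\tau_{q+1}^{-1}$ yields \eqref{eq:Rq:1}. The velocity bounds \eqref{eq:vq:global:1}--\eqref{eq:vq:vell} follow from $\bar v_q-v_q=\sum_i\eta_i w_i$ together with the same stability estimate and the triangle inequality using \eqref{eq:v:L2:x}. Finally \eqref{eq:vq:1} and \eqref{eq:Rq:2} are obtained by combining $\norm{\partial_t^M\eta_i}_{L^\infty}\les\tau_{q+1}^{-M}$ with the parabolic $D^N$ gain on $v_i$ after the initial boundary layer.

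For the good/bad set bookkeeping: by construction \eqref{eq:B:q+1:def} the set $\BB^{(q+1)}$ is a finite disjoint union of open intervals of length $5\tau_{q+1}$, which handles~\ref{eq:cond:iii}; monotonicity $\GG^{(q)}\subset\GG^{(q+1)}$ holds because $\RR_q$ vanishes on a $\tau_q$-neighborhood of $\GG^{(q)}$ by \eqref{eq:Rq:time:support} while $\tau_{q+1}\ll\vartheta_{q+1}\ll\tau_q$; and counting indices in $\mathcal C$ against the $\vartheta_{q+1}$-wide subdivision of $\BB^{(q)}$ gives the measure bound \eqref{eq:mead:2}. \textbf{Main obstacle.} The delicate point is forcing the linear stability bound to propagate the small $L^1$ size of $\RR_q$ through both $\RSZ$ and the time-derivative cost $\tau_{q+1}^{-1}$ without destroying the $\lambda_q^{-\eps_R/4}$ gain required by \eqref{eq:Rq:1}: this is precisely where the relation $\tau_{q+1}=\vartheta_{q+1}\lambda_q^{-\eps_R/4}$ of \eqref{eq:tau:q:def} is used, and where one must exploit an $L^1$-quasi-boundedness of $\RSZ$ (via a Calder\'on-Zygmund truncation with a tiny $a^{c\eps_R}$ loss) absorbed by choosing $\eps_R$ small enough relative to $\beta$ and $b$.
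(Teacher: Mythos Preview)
Your overall architecture matches the paper's: the partition of unity $\{\eta_i\}$ with transition scale $\tau_{q+1}$, the exact solutions $v_i$ launched from $v_q(\cdot,t_{i-1})$, the glued field $\bar v_q=\sum_i\eta_i v_i$, the algebraic derivation of $\mathring{\bar R}_q$, and the support/good-set bookkeeping are all as in Section~\ref{sec:gluing}. The parabolic smoothing bounds on $v_i$ and the resulting estimates \eqref{eq:vq:global:1}, \eqref{eq:vq:global:2}, \eqref{eq:vq:1}, \eqref{eq:Rq:2} go through exactly as you describe.

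There is, however, a genuine gap in your mechanism for the key $L^1$ bound \eqref{eq:Rq:1}. You propose to first bound $\norm{v_i-v_{i+1}}_{L^1}$ by a Duhamel/stability argument and then apply $\RSZ$, invoking an ``$L^1$-quasi-boundedness of $\RSZ$ with a tiny loss.'' This does not work: in the equation for $u=v_q-v_i$ the forcing is $\Proj_H\div\RR_q$, so Duhamel only gives $\norm{u(t)}_{L^p}\les (t-t_{i-1})\norm{|\nabla|\RR_q}_{L^p}$ (this is \eqref{eq:v:diff:Lp}), and the lost derivative on $\RR_q$ cannot be recovered by $\RSZ$ afterwards --- interpolating $\norm{|\nabla|\RR_q}_{L^p}$ between $L^1$ and $H^3$ yields a bound of size $\sim\lambda_q^{7/3}$, far too large. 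Moreover $\RSZ$, being of the type $\nabla\Delta^{-1}$, is not $L^1\to L^1$ bounded even up to logarithms.

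The paper's actual argument (Proposition~\ref{prop:stability}, Corollary~\ref{cor:stability}) is to estimate the \emph{potential} directly: setting $z=\Delta^{-1}\curl u$, one has $\norm{\RSZ u}_{L^p}\les\norm{z}_{L^p}$ by Calder\'on--Zygmund, and after a commutator computation (see \eqref{eq:z:diff:evo}) the evolution equation for $z$ has forcing $\Delta^{-1}\curl\div\RR_q$, which is a zeroth-order Calder\'on--Zygmund operator applied to $\RR_q$ and hence $L^p\to L^p$ bounded for $p>1$. A Gr\"onwall argument on this equation yields $\norm{\RSZ(v_i-v_q)}_{L^p}\les\vartheta_{q+1}\norm{\RR_q}_{L^p}$ with \emph{no} derivative loss; taking $p=1+\eps_R/32$ and interpolating $\norm{\RR_q}_{L^p}$ between \eqref{eq:Rq:L1} and \eqref{eq:Rq:H3:x} then gives \eqref{eq:Rv:diff:L1}, and multiplying by $\norm{\partial_t\eta_i}_{L^\infty}\les\tau_{q+1}^{-1}$ yields \eqref{eq:Rq:1}. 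This ``derive the PDE for the potential first'' step is the missing idea in your outline.
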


\subsection{Convex integration stage}\label{ss:convex_integration}
In this step we start from the pair $(\overline v_q, {\mathring {\bar R}}_{q})$, and construct a new pair $(v_{q+1}, \RR_{q+1})$ with $\RR_{q+1}$  obeying \eqref{eq:Rq:time:support} at level $q+1$, and which obeys the  bounds \eqref{eq:Rq:L1}--\eqref{eq:v:H3:x} at level $q+1$. 

The perturbation $w_{q+1}:=v_{q+1}-\overline v_q$ will be constructed to correct  for ${\mathring {\bar R}}_{q}$. Moreover, $w_{q+1}$ will be designed to have support outside a $\tau_{q+1}$ neighborhood of $\mathcal G^{(q+1)}$ -- this ensures properties \ref{eq:cond:v} and \ref{eq:cond:vi} in Section \ref{sec:inductive} will be satisfied.  As in \cite{BV}, the perturbation $w_{q+1}$ will consist of three parts: the principal part $w_{q+1}^{(p)}$, the divergence corrector $w_{q+1}^{(c)}$, and the temporal corrector $w_{q+1}^{(c)}$. 

The principal part $w_{q+1}^{(p)}$ will be constructed as a sum of \emph{intermittent jets} $W_{(\xi)}$ (defined in \eqref{eq:jet}, Section \ref{s:intermittent}). The use of intermittent jets replaces the use of \emph{intermittent Beltrami waves} in \cite{BV}. The principal difference of intermittent jets from  intermittent Beltrami waves is that their definition is in physical space rather than frequency space. Consequently,  intermittent jets are comparatively simpler to define and they can be designed to have disjoint support, mimicking the advantageous support properties of Mikado flows, as introduced in~\cite{DaneriSzekelyhidi17}. We note that the intermittent variants of the  $d-1$ dimensional Mikado flows found in \cite{ModenaSZ17,Luo18}, lying in $d$-dimensional space, are insufficiently intermittent to be used as building block for a 3-D Navier-Stokes convex integration scheme.\footnote{For Navier-Stokes in dimensions greater than $3$, they are however applicable, as demonstrated in \cite{Luo18}.} Intermittent jets are inherently $3$-dimensional (in space), with the trade-off that they are time dependent. We note in passing that utilizing intermittent jets, it is likely that the convex integration results \cite{ModenaSZ17,ModenaSZ18} on the transport equation may be improved. 

In the definition of $w_{q+1}^{(p)}$, the intermittent jets $W_{(\xi)}$ will be weighted by functions $a_{(\xi)}$:
\[
w_{q+1}^{(p)} = \sum_\xi a_{(\xi)}W_{(\xi)} \,,
\]
where $a_{(\xi)}$ are constructed such that
\begin{equation}\label{e:cancellation}
\div \left(w_{q+1}^{(p)}\otimes w_{q+1}^{(p)}+{\mathring {\bar R}}_{q}\right)\sim  \frac{1}{\mu} \partial_t\mathbb P_H\mathbb P_{\neq 0} \left(\sum_\xi a_{(\xi)}^2 |W_{(\xi)}|^2 \xi \right)+  \mbox{(pressure gradient)} + \mbox{(high frequency error)}
\end{equation}
for some large parameter $\mu$. As is typical in convex integration schemes, the high frequency error can be ignored since its contribution to $\RR_{q+1}$ can be bounded using the gain associated with solving the divergence equation. The temporal corrector $w_{q+1}^{(t)}$ is then defined to be 
\[w_{q+1}^{(t)}:= -  \frac{1}{\mu} \mathbb P_H\mathbb P_{\neq 0} \left(\sum_\xi a_{(\xi)}^2 |W_{(\xi)}|^2 \xi\right)\,,\]
where ${\mathbb P}_{H}$ is the Helmholtz projection, and ${\mathbb P}_{\neq 0}$ is the projection onto functions with mean zero. That is, $\Proj_H f = f -\nabla (\Delta^{-1} \div f)$ and $\Proj_{\neq 0} f = f - \fint_{\T^3} f$. Hence we have
\[\div \left(w_{q+1}^{(p)}\otimes w_{q+1}^{(p)}+{\mathring {\bar R}}_{q}\right)+\partial_t w_{q+1}^{(t)}\sim ~ \mbox{(pressure gradient) }+\mbox{ (high frequency error)}\,.
\]
Finally, the divergence corrector $w_{q+1}^{(c)}$ is designed such that $\div \left(w_{q+1}^{(p)}+w_{q+1}^{(c)}\right)\equiv 0$, and hence the perturbation
\[w_{q+1}^{(c)}:=w_{q+1}^{(p)}+w_{q+1}^{(c)}+w_{q+1}^{(t)}\]
is divergence free. 

The intermittent jets will be defined to have support confined to $\sim (\ell_{\perp}\lambda_{q+1})^3$ many cylinders of diameter $\sim \frac{1}{\lambda_{q+1}}$ and length $\sim \frac{\ell_{\|}}{\ell_{\perp}\lambda_{q+1}}$. In particular, the support of $w_{q+1}^{(p)}$ has measure $\sim \ell_{\|}\ell_{\perp}^2$. Using the heuristic that $\norm{w_{q+1}^{(p)}}_{L^2}$ should be roughly the size $\norm{\mathring {\bar R}_q}_{L^1}^{\sfrac 12 }$, by the $L^p$ de-correlation result in Lemma~\ref{lem:Lp:independence} below, one would expect an $L^p$ estimate on $w_{q+1}^{(p)}$ of the form
\begin{equation}\label{e:w_heurs}
\norm{w_{q+1}^{(p)}}_{L^p}\sim \delta_{q+1}^{\sfrac12}\ell_{\perp}^{\sfrac{2}{p}-1}\ell_{\|}^{\sfrac{1}{p}-\sfrac12}\,.
\end{equation}
Indeed we will prove  estimate \eqref{e:w_heurs} for $p=2$ and prove a slightly weaker estimate for $1<p<2$ (see Proposition \ref{prop:perturbation}). Utilizing \eqref{e:w_heurs}, one may heuristically estimate the contribution of $(-\Delta)^{\alpha} w_{q+1}^{(p)}$ to the new Reynolds stress $\RR_{q+1}$
\begin{align*}
\norm{\abs{\nabla}^{-1}(-\Delta)^{\alpha} (w_{q+1}^{(p)})}_{L^1} \sim  \norm{w_{q+1}^{(p)}}_{W^{2\alpha-1,p}}\sim  \delta_{q+1}^{\sfrac12}\ell_{\perp}^{\sfrac{2}{p}-1}\ell_{\|}^{\sfrac{1}{p}-\sfrac12}\lambda_{q+1}^{2\alpha-1}\,,
\end{align*}
with $p>1$ arbitrarily close to $1$.
Here we see the necessity of the 3-dimensionality of the intermittent jets. 

In order to ensure that an identity of the form \eqref{e:cancellation} holds, the cylinder supports of the intermittent jets will be shifting at a speed $\ell_{\perp}\lambda_{q+1}\mu$. Heuristically, one would then expect that in order to ensure that the contribution of $\partial_t w_{q+1}^{(p)}$ to $\RR_{q+1}$ is small, one would need to impose an upper bound on the choice of $\mu$. One then needs to choose $\mu$ carefully in order to balance different contributions to the Reynolds stress error. Explicitly, we will define the parameters $\mu$, $\ell_{\perp}$ and $\ell_{\|}$ by
\begin{align} \label{e:param_def}
\mu=\frac{\lambda_{q+1}^{2\alpha -1}\ell_{\|}}{\ell_{\perp}}, \qquad \ell_{\perp}:=\lambda_{q+1}^{-\frac{20\alpha-1}{24}}\qquad \mbox{and}\qquad\ell_{\|}:=\lambda_{q+1}^{-\frac{20\alpha-13}{12}}\,.
\end{align}
With these choices, we have
\[\ell_{\|}^{-1}\ll \ell_{\perp}^{-1}\ll \lambda_{q+1}\,\]
since $\alpha < \sfrac 54$. For technical reasons, we will require that $\lambda_{q+1}\ell_{\perp}\in\mathbb N$. This may be achieved by assuming that 
\begin{equation}\label{e:integer_constraint}
a^{\frac{25-20\alpha}{24}}\in\mathbb N\,,
\end{equation}
where we recall that we have previously assumed that $b\in\mathbb N$.
  
\subsection{Proof of Theorem~\ref{thm:main}}

Let $u^{(1)}$ and $u^{(2)}$ be two zero mean solutions of the Navier-Stokes equations (with different, zero-mean  initial data),  as in the statement of the theorem. Also, let $b$, $\beta$, $\epsilon_R$, and $a_0$ be as in Proposition~\ref{prop:main}.
Let $\eta \colon [0,T] \to [0,1]$ be a smooth cutoff function such that $\eta=1$ on $[0,\sfrac{2T}{5}]$ and $\eta=0$ on $[\sfrac{3T}{5},T]$.

Define
\begin{align}\notag
v_0 (x,t) = \eta(t) u^{(1)}(x,t) + (1-\eta(t)) u^{(2)}(x,t)\,.
\end{align}
and 
\begin{align}\label{e:R_0_def}
\RR_0 =  \partial_t\eta \; \RSZ(u^{(1)}-u^{(2)})-    \eta(1-\eta)(u^{(1)}-u^{(2)})\mathring{\otimes} (u^{(1)}-u^{(2)}) \, ,
\end{align}
where $a \mathring \otimes b$ denotes the traceless part of the tensor $a\otimes b$, and $\RSZ$ is a standard  inverse divergence operator acting on vector fields $v$ which have zero mean on $\T^3$ as
\begin{align}
(\RSZ v)^{k\ell} = (\partial_k \Delta^{-1} v^{\ell} + \partial_\ell \Delta^{-1} v^k)  - \frac{1}{2} \left( \delta_{k \ell} + \partial_k \partial_\ell \Delta^{-1}\right)\div \Delta^{-1} v  
\label{eq:RSZ}
\end{align} 
for $k,\ell \in \{1,2,3\}$. The above inverse divergence operator has the property that  $\RSZ v(x)$ is a symmetric trace-free matrix for each $x \in \T^3$, and $\RSZ$ is an right inverse of the $\div$ operator, i.e.\ $\div (\RSZ v) = v$. When $v$ does not obey $\int_{\T^3} v dx = 0$, we overload notation and denote $\RSZ v := \RSZ( v -\int_{\T^3} v dx)$. Note that that  $\nabla \RSZ$ is a Calder\'on-Zygmund operator, and  that $\RSZ$ obeys the same elliptic regularity estimates as $|\nabla|^{-1}$.

Observe that the pair $(v_0, \RR_0 )$ obeys the Navier-Stokes-Reynolds system \eqref{eq:NSE:Reynolds}, for a suitable $0$-mean pressure scalar $p_0$ which may be computed by solving a Poisson equation. Moreover, let $a_0$, $\beta$ and  $b$ be as in Proposition \ref{prop:main}.  Then choosing $a\geq a_0$ sufficiently large, the pair $(v_0, \RR_0 )$ satisfies \eqref{eq:Rq:L1}-\eqref{eq:v:H3:x}. From the definition \eqref{e:R_0_def}, it follows that $\RR_0$ is supported on the interval $[\sfrac{2T}{5},\sfrac{3T}{5}]$. Since by definition $\GG^{(0)} = [0,\sfrac T3] \cup [\sfrac{2T}{3} ,T]$ and $\tau_0=\sfrac{T}{15}$, we obtain property \eqref{eq:Rq:time:support}.

For $q \geq 1$ we inductively apply Proposition~\ref{prop:main}. 
The bound \eqref{eq:v:H3:x} and \eqref{e:interative_v} and interpolation yields
\begin{align*}
\sum_{q=0}^{\infty} \norm{v_{q+1}-v_q}_{\dot{H}^{\beta'}} 
&\les \sum_{q=0}^{\infty} \norm{v_{q+1}-v_q}_{L^{2}}^{1-\sfrac{\beta'}3}(\norm{v_{q+1}}_{\dot{H}^3} + \norm{v_q}_{\dot{H}^{3}})^{\sfrac {\beta'} 3} \\
&\les  \; \sum_{q=0}^{\infty} \lambda_{q+1}^{- \beta \frac{3-\beta'}{6}}\lambda_{q+1}^{\frac{4 \beta'}3} \\
&\les 1 \, ,
\end{align*}
for $0\leq \beta'<\frac{3\beta}{8+\beta}$, where the implicit constant is  universal (independent of $a$). Hence there exists 
\begin{align*}
 v:= \lim_{q\to \infty} v_q   \in H^{\beta'}\,. 
\end{align*}
Since $\norm{\mathring R_q}_{L^1} \to 0$ as $q \to \infty$, and since $v_q \to v$ also in $L^\infty_t L^{2+\beta'''}$ for some $\beta'''>0$, it is straightforward to show that $v$ is a weak solution of the Navier-Stokes equation. Moreover, as a consequence of properties \ref{eq:cond:i} and \ref{eq:cond:v} from Section \ref{sec:inductive} and the definition of $v_0$ we have
\[v \equiv u^{(1)} \quad \mbox{on} \quad [0,\sfrac{T}{3}], \qquad \mbox{and} \qquad  v \equiv u^{(2)} \quad \mbox{on} \quad [\sfrac{2T}{3},T]\, .\]
The argument leading to \eqref{e:box_count} implies that the singular set of times of $v$ has  box-counting dimension (and hence Hausdorff dimension)  less than $\sfrac{\eps_R}{64}$. Finally, the claimed $C^0_tW^{1,1+\beta''}_x$ regularity on $v$, for some $\beta''>0$, follows from the maximal regularity of the heat equation (fractional heat equation if $\alpha>1$), once we note that $\norm{\Proj_H(v\otimes v)}_{L^{1+\beta''}}\les \norm{v}_{H^{\beta'}}^2$ if $\beta''$ is chosen suitably small. The theorem then holds with $\bar \beta = \min\{ \beta'', \beta', \sfrac{\eps_R}{64} \} > 0$.

\section{Gluing step}
\label{sec:gluing}
  
\subsection{Local in time estimates}
\label{sec:local:in:time}
It is well-known Navier-Stokes equations are locally (in time) well-posed in $H^3$, which is a scaling subcritical space. Moreover, away from the initial time, parabolic regularization takes place. We summarize these facts, in version that is suitable for the applications in this paper.
\begin{proposition}
\label{prop:local:existence}
Let $v_0 = v|_{t=t_0} \in H^{3}(\T^3)$ have zero mean on $\T^3$, and consider the Cauchy problem for \eqref{eq:NSE} with this initial condition. 
There exists a universal constant $c\in (0,1]$ such that if $t_1 > t_0$ is such that
%\footnote{In \eqref{eq:v:diff:IC}, should we say also take the minimum of that quantity against $1$ or $T$? To ensure all times are less than $T$?} 
%(\vlad{I think the local existence could be done in terms of the $H^1$ norm, for any $\alpha \geq 1$. The power of course will depend on $\alpha$. Or maybe it is simpler to do it in terms of the $H^{5/2}$ norm of the datum, since this almost embeds in $C^1$? We could also do $H^3$ norm of the datum.})
\begin{align}
0< t_1-t_0 \leq \frac{c}{\norm{v_0}_{H^3}} \, ,
\label{eq:v:diff:IC}
\end{align}
then there exists a unique strong solution to \eqref{eq:NSE} on $[t_0,t_1)$, and it obeys the estimates 
\begin{subequations}
\begin{align}
\sup_{t\in [t_0,t_1]} \norm{v(t)}_{L^2}^2 +2 \int_{t_0}^{t_1} \norm{v(t)}_{\dot{H}^\alpha}^2 dt &\leq \norm{v_0}_{L^2}^2. 
\label{eq:v:L2:1}\\
\sup_{t\in [t_0,t_1]} \norm{v(t)}_{H^3} &\leq 2 \norm{v_0}_{H^3}.
\label{eq:v:H3:1}
\end{align}
\end{subequations}
Moreover, 
assuming that 
\begin{align}
 0< t_1-t_0 \leq  \frac{ c}{ \norm{v_0}_{H^3}  (1+ \norm{v_0}_{L^2})^{\frac{1}{2\alpha-1}}}   \, ,
\label{eq:t:weird}
\end{align}
we have that 
\begin{align}
\sup_{t\in (t_0,t_1]}  |t-t_0|^{\frac{N}{2\alpha}+M} \norm{\partial_t^M D^N v(t)}_{H^3}   \les  \norm{v_0}_{H^3}\,, 
\label{eq:v:H:3:N}
\end{align}
for any $N\geq 0$ and $M \in\{0,1\}$. The implicit constant may depend on $\alpha, N, M$.
\end{proposition}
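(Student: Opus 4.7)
The proposition splits naturally into three tasks: local well-posedness with the bounds \eqref{eq:v:L2:1}--\eqref{eq:v:H3:1}; the spatial smoothing estimate \eqref{eq:v:H:3:N} for $M=0$; and the case $M=1$, obtained from the PDE and the spatial smoothing.

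For local well-posedness, the plan is a standard energy argument. Testing the PDE against $v$ and using $\div v=0$ to kill both the nonlinearity and the pressure gives \eqref{eq:v:L2:1} as an identity. For \eqref{eq:v:H3:1} I apply $D^3$ to the PDE, test against $D^3 v$, and use a Kato--Ponce commutator/Moser estimate together with the three-dimensional embedding $H^3\hookrightarrow L^\infty$ (and the algebra property of $H^3$) to bound $\abs{\int_{\T^3} D^3(v\cdot\nabla v)\cdot D^3 v\,dx}\lesssim \|v\|_{H^3}^3$. This yields $\tfrac{d}{dt}\|v\|_{H^3}^2\lesssim \|v\|_{H^3}^3$, which integrates under \eqref{eq:v:diff:IC} to the doubling bound \eqref{eq:v:H3:1}. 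Local existence and uniqueness in $C^0([t_0,t_1];H^3)$ then follow from a standard Galerkin or contraction argument driven by these a priori bounds.

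For the spatial smoothing \eqref{eq:v:H:3:N} with $M=0$, which is the heart of the proposition, I would proceed by induction on $N$ using the mild form
\[
v(t)=e^{-\tau(-\Delta)^\alpha}v_0-\int_0^\tau e^{-(\tau-s)(-\Delta)^\alpha}\Proj_H\div(v\otimes v)(t_0+s)\,ds,\qquad \tau=t-t_0,
\]
combined with the semigroup smoothing $\|D^N e^{-s(-\Delta)^\alpha}\|_{L^2\to L^2}\lesssim s^{-N/(2\alpha)}$. Setting
\[
\mathcal{N}_N(t_1):=\sup_{t\in(t_0,t_1]}(t-t_0)^{N/(2\alpha)}\|v(t)\|_{H^{3+N}},
\]
the linear part contributes at most $C\|v_0\|_{H^3}$ to $\mathcal{N}_N(t_1)$. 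In the Duhamel contribution I would bound the nonlinearity by Moser, $\|\Proj_H\div(v\otimes v)\|_{H^{3+N-\sigma}}\lesssim \|v\|_{L^\infty}\|v\|_{H^{4+N-\sigma}}$ for a judicious $\sigma\in(0,2\alpha)$, refine $\|v\|_{L^\infty}$ via a Gagliardo--Nirenberg interpolation $\|v\|_{L^\infty}\lesssim \|v\|_{L^2}^{1-\theta}\|v\|_{H^{3+N}}^\theta$ so as to import the energy bound $\|v(t)\|_{L^2}\leq \|v_0\|_{L^2}$ from \eqref{eq:v:L2:1}, and interpolate $\|v\|_{H^{4+N-\sigma}}$ between $\|v\|_{L^2}$ and the inductive top norm $\mathcal{N}_N$. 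Integrating in $s$ and invoking Young's inequality closes a bootstrap of the form $\mathcal{N}_N\leq C\|v_0\|_{H^3}+C(t_1-t_0)^\gamma(1+\|v_0\|_{L^2})^\kappa\|v_0\|_{H^3}^\eta\mathcal{N}_N^{1/2}$, and under the smallness hypothesis \eqref{eq:t:weird} the nonlinear contribution is absorbable, yielding $\mathcal{N}_N\lesssim\|v_0\|_{H^3}$. Tracking exponents, the interpolation is precisely what produces the factor $(1+\|v_0\|_{L^2})^{1/(2\alpha-1)}$ in \eqref{eq:t:weird}.

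For the case $M=1$ I would invoke the PDE to write $\partial_t D^N v = -(-\Delta)^\alpha D^N v - \Proj_H D^N\div(v\otimes v)$. The linear term is bounded via the $M=0$ case at order $N+2\alpha$, contributing $\tau^{-N/(2\alpha)-1}\|v_0\|_{H^3}$. The nonlinearity is bounded by Moser by $\|v\|_{H^3}\|v\|_{H^{N+4}}$, which via the $M=0$ case at order $N+1$ combined with \eqref{eq:t:weird} is absorbed into the same order. The principal obstacle is step two above: ensuring the final bound is \emph{linear} in $\|v_0\|_{H^3}$, while matching the specific mixed $H^3$--$L^2$ exponent of \eqref{eq:t:weird}. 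This forces one to use the sharp energy bound $\|v(t)\|_{L^2}\leq \|v_0\|_{L^2}$ inside the Sobolev interpolation rather than the cruder $\|v\|_{L^2}\leq \|v\|_{H^3}$, which is exactly what generates the weaker hypothesis $(1+\|v_0\|_{L^2})^{1/(2\alpha-1)}$ instead of the more restrictive $\|v_0\|_{H^3}^{1/(2\alpha-1)}$ one would otherwise require.
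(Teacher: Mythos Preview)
Your proposal is correct and follows essentially the same route as the paper: energy estimates for \eqref{eq:v:L2:1}--\eqref{eq:v:H3:1}, the mild formulation plus semigroup smoothing $\|D^N e^{-s(-\Delta)^\alpha}\|_{L^2\to L^2}\lesssim s^{-N/(2\alpha)}$ and Gagliardo--Nirenberg interpolation bringing in $\|v\|_{L^2}$ for the spatial smoothing, and direct use of the equation for $M=1$. The only organizational difference is that the paper, rather than running a bootstrap on $\mathcal N_N$ at each step, first treats $N=1$ explicitly using only the already-established $H^3$ bound (so no bootstrap is needed), and then for $N\ge 2$ bounds $\|D^{N-1}(v\otimes v)\|_{H^3}$ purely via the inductive hypothesis and splits the Duhamel integral at $t/2$; this avoids the quadratic-in-$\mathcal N_N$ feedback you close by Young's inequality, but the two arguments are equivalent in spirit and yield the same exponents.
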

\begin{proof}[Proof of Proposition~\ref{prop:local:existence}]
The energy inequality gives a global in time control on $\norm{v(t)}_{L^2}$:
\[
\frac 12 \frac{d}{dt} \norm{v}_{L^2}^2 \leq - \norm{v}_{\dot{H}^\alpha}^2 .
\]
From the Gagliardo-Nirenberg-Sobolev and the Poincar\'e inequalities, and using that $\nabla \cdot v = 0$ we obtain
\begin{align}
\frac 12 \frac{d}{dt} \norm{v}_{\dot{H}^3}^2  + \norm{v}_{\dot{H}^{3+\alpha}}^2
\les  \norm{v}_{\dot{H}^3}^2 \norm{\nabla v}_{L^\infty} + \norm{v}_{\dot{H}^3} \norm{\Delta v}_{L^4}^2 
\les \norm{v}_{\dot{H}^3}^3\notag
\end{align}
which gives the bound \eqref{eq:v:H3:1} for a time interval $[t_0,t_1]$ with $t_1$ that obeys \eqref{eq:v:diff:IC}. 
The bound \eqref{eq:v:H3:1} is subcritical, in the sense that an $L^\infty_t H^3_x$ a priori estimate is sufficient to establish the  uniqueness of the solution. The higher regularity claimed in \eqref{eq:v:H:3:N} follows from the mild form of the solution 
\begin{align}
v(t) = e^{-(t-t_0) (-\Delta)^\alpha} v_0 + \int_{t_0}^t e^{-(t-s) (-\Delta)^\alpha} {\mathbb P}_H \div (v(s) \otimes v(s)) ds,
\label{eq:v:mild}
\end{align} 
and properties of the fractional heat equation which may be derived from Plancherel. 

Let us first focus on the case $M=0$. For $\alpha=1$, estimate \eqref{eq:v:H:3:N} is well-known, and follows from the instantaneous gain of analyticity of the solution~\cite{FoiasTemam89}, or a small modification of the below argument. For $\alpha >1 $ we briefly sketch the argument. Using Gallilean invariance, let us only consider the case $t_0=0$. From the inequality
\begin{align}
\norm{uv}_{H^3} \les \norm{u}_{H^3} \norm{v}_{L^\infty} + \norm{u}_{L^\infty} \norm{v}_{H^3} \les \norm{u}_{H^3} \norm{v}_{L^2}^{1/2} \norm{v}_{H^3}^{1/2} + \norm{u}_{L^2}^{1/2} \norm{u}_{H^3}^{1/2} \norm{v}_{H^3},
\label{eq:H3:algebra}
\end{align}
the formulation \eqref{eq:v:mild} and the boundedness of the Leray projector ${\mathbb P}_H$ on $L^2$, we obtain
\begin{align}
t^{\frac{1}{2\alpha}} \norm{D v(t)}_{H^3} 
&\leq t^{\frac{1}{2\alpha}} \norm{D e^{-t(-\Delta)^\alpha}}_{L^2\to L^2} \norm{v_0}_{H^3}   + t^{\frac{1}{2\alpha}} \int_{0}^t \norm{D^2 e^{-(t-s)(-\Delta)^\alpha}}_{L^2\to L^2} \norm{v(s)}_{H^3}^{\sfrac 32} \norm{v(s)}_{L^2}^{\sfrac 12} ds \notag\\
&\les \norm{v_0}_{H^3}+ t^{\frac{1}{2\alpha}}\norm{v_0}_{H^3}^{\sfrac 32} \norm{v_0}_{L^2}^{\sfrac 12} \int_{0}^t \frac{ds}{(t-s)^{\frac{1}{\alpha}}} \notag\\
&\les \norm{v_0}_{H^3} \left( 1 + t^{1 - \frac{1}{2\alpha}}\norm{v_0}_{H^3}^{\sfrac 12} \norm{v_0}_{L^2}^{\sfrac 12} \right) 
\notag\\
&\les \norm{v_0}_{H^3} \left( 1 + t^{ \frac{2\alpha - 1}{2\alpha}}\norm{v_0}_{H^3}^{\frac{2\alpha-1}{2\alpha}} \norm{v_0}_{L^2}^{\frac{1}{2\alpha}} \right) \notag
\end{align}
from which \eqref{eq:v:H:3:N} with $N=1$ and $M=0$ follows in view of \eqref{eq:t:weird}.
In order to treat the case $N\geq 2$ and $M=0$, we first note that for $1 \leq n\leq N-1$ by induction on $N$ we have
\begin{align}
\norm{D^{n} (v \otimes v)}_{H^3} 
&\les \sum_{j=0}^{n}  \norm{D^j v  \otimes D^{n-j} v }_{H^3} \notag\\
&\les \sum_{j=0}^{n}  \norm{D^j v }_{H^3} \norm{D^{n-j} v }_{H^3}^{\sfrac 12} \norm{D^{n-j} v}_{L^2}^{\sfrac 12} \notag\\
&\les \sum_{j=0}^{n-3}  \norm{D^j v }_{H^3} \norm{D^{n-j} v }_{H^3}^{\sfrac 12}  \norm{D^{n-j-3} v}_{H^3}^{\sfrac 12}   
+ \norm{D^{n-2} v }_{H^3} \norm{D^{2} v }_{H^3}^{\sfrac 12} \norm{v}_{H^3}^{\sfrac 13}  \norm{v}_{L^2}^{\sfrac 16}\notag\\
&\quad 
+ \norm{D^{n-1} v }_{H^3} \norm{D  v }_{H^3}^{\sfrac 12} \norm{v}_{H^3}^{\sfrac 16}  \norm{v}_{L^2}^{\sfrac 13}
+ \norm{D^n v }_{H^3} \norm{v }_{H^3}^{\sfrac 12} \norm{v}_{L^2}^{\sfrac 12}\notag\\
&\les \norm{v_0}_{H^3}^2 t^{-\frac{n}{2\alpha} + \frac{3}{4\alpha}} + \norm{v_0}_{H^3}^{\sfrac{11}{6}} t^{-\frac{n}{2\alpha} + \frac{1}{2\alpha}} \norm{v_0}_{L^2}^{\sfrac 16}\notag\\
&\quad + \norm{v_0}_{H^3}^{\sfrac{5}{3}} t^{-\frac{n}{2\alpha} + \frac{1}{4 \alpha}} \norm{v_0}_{L^2}^{\sfrac 13}+ \norm{v_0}_{H^3}^{\sfrac 32} t^{- \frac{n}{2\alpha}} \norm{v_0}_{L^2}^{\sfrac 12} \notag\\
&\les  \norm{v_0}_{H^3}^{\sfrac 32} t^{- \frac{n}{2\alpha}}  \left( \norm{v_0}_{H^3}^{\sfrac 12} t^{\frac{3}{4\alpha}} + \norm{v_0}_{L^2}^{\sfrac 12} \right).\notag
\end{align}
Using the above estimate with $n = N-1$ we obtain that
\begin{align}
t^{\frac{N}{2\alpha}} \norm{D^N v(t)}_{H^3}   
&\leq t^{\frac{N}{2\alpha}} \norm{D^N e^{-t(-\Delta)^\alpha}}_{L^2\to L^2} \norm{v_0}_{H^3} \notag\\
&  + t^{\frac{N}{2\alpha}} \int_{\sfrac{t}{2}}^t \norm{D^2 e^{-(t-s)(-\Delta)^\alpha}}_{L^2\to L^2}  \norm{D^{N-1}( v(s)\otimes v(s))}_{H^3}  \notag\\
&  + t^{\frac{N}{2\alpha}} \int_{0}^{\sfrac{t}{2}}  \norm{D^{N+1} e^{-(t-s)(-\Delta)^\alpha}}_{L^2\to L^2} \norm{v(s) \otimes v(s)}_{H^3} ds \notag\\
&\les \norm{v_0}_{H^3}+ t^{\frac{N}{2\alpha}}\norm{v_0}_{H^3}^{\sfrac 32} \int_{\sfrac{t}{2}}^t \frac{  \norm{v_0}_{H^3}^{\sfrac 12} s^{\frac{3}{4\alpha}} + \norm{v_0}_{L^2}^{\sfrac 12}  }{(t-s)^{\frac{1}{\alpha}} s^{\frac{N-1}{2\alpha}}}  ds + t^{\frac{N}{2\alpha}} \norm{v_0}_{H^3}^{\sfrac 32} \norm{v_0}_{L^2}^{\sfrac 12} \int_{0}^{\sfrac{t}{2}} \frac{ds}{(t-s)^{\frac{N+1}{2\alpha}}} \notag\\
&\les \norm{v_0}_{H^3} \left( 1 + t^{1 + \frac{5}{4\alpha}} \norm{v_0}_{H^3} + t^{1 - \frac{1}{2\alpha}}\norm{v_0}_{H^3}^{\sfrac 12} \norm{v_0}_{L^2}^{\sfrac 12} \right)  \notag
\end{align}
from which \eqref{eq:v:H:3:N} follows in view of \eqref{eq:t:weird}.

To obtain the desired bounds for $M = 1$, let us consider the case $N=0$ first. 
Using the equation, the already established bounds for $M=0$ and $N\geq 0$, the Gagliardo-Nirenberg-Sobolev inequalities, and the fact that the Leray projector is bounded on $L^2$, we have that
\begin{align}
t \norm{\partial_t v(t)}_{H^3} 
&\leq t \norm{(-\Delta)^\alpha v(t)}_{H^3} + t  \norm{\nabla v(t)}_{H^3} \norm{v(t)}_{H^3}^{\sfrac 12} \norm{v(t)}_{L^2}^{\sfrac 12} +t  \norm{v(t)}_{H^3}^{\sfrac 56} \norm{v(t)}_{L^2}^{\sfrac 16}   \norm{v(t)}_{H^3}   \notag\\
&\les \norm{v_0}_{H^3} + t^{1-\frac{1}{2\alpha}} \norm{v_0}_{H^3}^{\sfrac 32} \norm{v_0}_{L^2}^{\sfrac 12} + t \norm{v_0}_{H^3}^{\sfrac{11}{6}} \norm{v_0}_{L^2}^{\sfrac 16}\,,\notag
\end{align}
and the desired bound follows from the assumption \eqref{eq:t:weird}. The remaining cases $N\geq 1$ are treated in a similar manner, using the Leibniz rule. We omit these details.
\end{proof}

\subsection{Stability estimates}
\label{sec:stability}

In this section we estimate the difference between an approximate solution $v_q$ and an exact solution of the Navier-Stokes equation. Let $\RSZ$ be the inverse divergence operator defined in \eqref{eq:RSZ}. The main result is:  
\begin{proposition}
\label{prop:stability}
Fix $\alpha \in [1, \sfrac 54)$ and an integrability index $p_0 \in (1,\sfrac 54)$. Assuming the parameter $\delta_0$ is sufficiently large, depending on $p_0$, the following holds.

For $q\geq 0$, assume that $(v_q,\RR_q)$ is a $C^0_t H^3_x$ smooth solution of \eqref{eq:NSE:Reynolds} which obeys the estimates \eqref{eq:Rq:L1}--\eqref{eq:v:H3:x}. Let $t_0 \in [0,T]$ and define
\begin{align}\notag
v_0 := v_q\vert_{t=t_0}.
\end{align}
Assume that $t_1>t_0$ is such that $[t_0,t_1] \subset [0,T]$ and
\begin{align}
0< t_1 - t_0 \leq  \delta_0^{-1} \lambda_q^{-4} \, .
\label{eq:t1:assume}
\end{align}
Then, in view of \eqref{eq:v:H3:x} and  Proposition~\ref{prop:local:existence} there exists a unique $C^0_t H^{3}_x$ smooth zero-mean solution $v$ of the Cauchy problem for \eqref{eq:NSE} on $[t_0,t_1]$, with initial datum $v_0$. 
We claim that there exists a constant $C = C(p_0,\alpha) > 0$ such that for any $p \in [p_0,2]$,   we have that 
\begin{subequations}
\begin{align}
\norm{v(t) - v_q(t)}_{L^p} &\leq C |t -t_0| \norm{|\nabla|\RR_q}_{L^\infty([t_0,t_1];L^p)}
\label{eq:v:diff:Lp}
\\
\norm{\RSZ v(t) - \RSZ v_q(t)}_{L^p} &\leq C  |t  -t_0| \norm{  \RR_q}_{L^\infty([t_0,t_1];L^p)}
\label{eq:Rv:diff:Lp}
\end{align}
\end{subequations}
holds for all $t \in (t_0,t_1]$. 
\end{proposition}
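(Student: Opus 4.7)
I will work with the Duhamel (mild) formulation of the difference $w := v - v_q$. Subtracting the equations for $v$ and $v_q$, applying the Helmholtz projection to eliminate the pressure, and using $w(t_0) = 0$, the identity
\begin{equation*}
w(t) = -\int_{t_0}^t e^{-(t-s)(-\Delta)^\alpha} \Proj_H \div\bigl(w\otimes v + v_q\otimes w + \RR_q\bigr)(s)\,ds
\end{equation*}
serves as the backbone of the proof. The main ingredients are: (i) $L^p$-boundedness of $\Proj_H$, Riesz transforms, and the zeroth-order operator $\RSZ\Proj_H\div$ for $p\in(1,\infty)$; (ii) the fractional heat bounds $\norm{e^{-t(-\Delta)^\alpha}}_{L^p\to L^p}\leq 1$ and $\norm{e^{-t(-\Delta)^\alpha}|\nabla|}_{L^p\to L^p}\leq Ct^{-1/(2\alpha)}$; (iii) the Gagliardo--Nirenberg interpolation $\norm{v}_{L^\infty}\lesssim \norm{v}_{L^2}^{1/2}\norm{v}_{H^3}^{1/2}\leq C\delta_0^{1/4}\lambda_q^2 =: L$, valid on $[t_0,t_1]$ by Proposition~\ref{prop:local:existence} combined with \eqref{eq:v:L2:x}--\eqref{eq:v:H3:x}, and similarly for $v_q$.

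To prove \eqref{eq:v:diff:Lp}, I will bound the $\RR_q$ contribution by pulling $|\nabla|$ outside: writing $\div = |\nabla| R$ with $R$ a bounded combination of Riesz multipliers, one has $\norm{e^{-(t-s)(-\Delta)^\alpha}\Proj_H\div\RR_q(s)}_{L^p}\leq C\norm{|\nabla|\RR_q(s)}_{L^p}$, which after integration produces the desired linear $(t-t_0)$ contribution. For the bilinear part I absorb the derivative into the semigroup, giving $\norm{e^{-(t-s)(-\Delta)^\alpha}\Proj_H\div(w\otimes v + v_q\otimes w)}_{L^p}\leq C(t-s)^{-1/(2\alpha)} L\,\norm{w(s)}_{L^p}$. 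Setting $Z(t_1):=\sup_{s\in[t_0,t_1]} (s-t_0)^{-1}\norm{w(s)}_{L^p}$ and substituting these into the mild form, the weighted Gronwall closes as $Z(t_1)\leq 2C\norm{|\nabla|\RR_q}_{L^\infty([t_0,t_1];L^p)}$, provided $CL(t_1-t_0)^{1-1/(2\alpha)}\leq 1/2$. Under the hypothesis $t_1-t_0\leq\delta_0^{-1}\lambda_q^{-4}$, this coefficient equals a negative power of $\delta_0$ times a non-positive power of $\lambda_q$ throughout $\alpha\in[1,5/4)$, so it drops below $1/2$ once $\delta_0$ is chosen large depending on $\alpha$ and $p_0$. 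This yields \eqref{eq:v:diff:Lp}.

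For \eqref{eq:Rv:diff:Lp}, I first record the companion $L^p$ bound $\norm{w(t)}_{L^p}\leq C(t-t_0)^{1-1/(2\alpha)}\norm{\RR_q}_{L^\infty([t_0,t_1];L^p)}$, obtained by the same weighted bootstrap but using the smoothing estimate $\norm{e^{-(t-s)(-\Delta)^\alpha}\Proj_H\div\RR_q}_{L^p}\leq C(t-s)^{-1/(2\alpha)}\norm{\RR_q}_{L^p}$ on the forcing. Since $\RSZ$ commutes with the fractional heat semigroup and $\RSZ\Proj_H\div$ is bounded on $L^p$ (a combination of Riesz-type multipliers), applying $\RSZ$ to the mild form above yields
\begin{equation*}
\norm{\RSZ w(t)}_{L^p} \leq C(t-t_0)\norm{\RR_q}_{L^\infty L^p} + CL\int_{t_0}^t \norm{w(s)}_{L^p}\,ds.
\end{equation*}
Inserting the companion bound makes the second term of order $CL(t-t_0)^{2-1/(2\alpha)}\norm{\RR_q}_{L^\infty L^p}$, and since $CL(t_1-t_0)^{1-1/(2\alpha)}$ is the same small quantity that closed the first bootstrap, this term is absorbed into $C(t-t_0)\norm{\RR_q}_{L^\infty L^p}$, giving \eqref{eq:Rv:diff:Lp}.

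\textbf{Main obstacle.} The crux is the uniform-in-$q$ smallness of the Gronwall coefficient $L(t_1-t_0)^{1-1/(2\alpha)}$. A naive Sobolev embedding $H^3\hookrightarrow L^\infty$ would give $L\sim \lambda_q^4$, which combined with $t_1-t_0\sim \lambda_q^{-4}$ fails to shrink with $q$ when $\alpha\in[1,5/4)$. The decisive point is the Gagliardo--Nirenberg interpolation, which leverages the kinetic-energy bound $\norm{v}_{L^2}\lesssim \delta_0^{1/2}$ from \eqref{eq:v:L2:x} to reduce $L$ to $\sim \delta_0^{1/4}\lambda_q^2$. Together with the choice of $\delta_0$ large in terms of $\alpha$ and $p_0$, this produces closure of both bootstraps throughout the full range $\alpha \in [1,5/4)$ and $p\in [p_0,2]$.
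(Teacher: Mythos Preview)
Your proof of \eqref{eq:v:diff:Lp} is essentially the paper's: both use the Duhamel formula for the difference, bound the $\RR_q$ term via $\norm{e^{-(t-s)(-\Delta)^\alpha}\Proj_H\div\RR_q}_{L^p}\les\norm{|\nabla|\RR_q}_{L^p}$, absorb the derivative into the semigroup on the bilinear term, and close a weighted bootstrap using the Gagliardo--Nirenberg bound $\norm{v}_{L^\infty}\les\delta_0^{1/4}\lambda_q^2$ to make $L(t_1-t_0)^{1-1/(2\alpha)}\les\delta_0^{-1/4}$ small.

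For \eqref{eq:Rv:diff:Lp}, however, your route is genuinely different. The paper introduces $z=\Delta^{-1}\curl(v_q-v)$, so that $\norm{\RSZ(v-v_q)}_{L^p}\les\norm{z}_{L^p}$, and derives a transport--diffusion equation for $z$. The point of that computation is the commutator expansion: after writing $[\Delta^{-1}\curl,\,v\cdot\nabla]\curl z$ and $\Delta^{-1}\curl(\curl z\cdot\nabla v_q)$ as divergences of tensors of the form $z\otimes\nabla v$ and $z\otimes\nabla v_q$, every term on the right-hand side carries the derivative on $v$ or $v_q$ rather than on $z$. This lets the bootstrap close \emph{directly} in $\norm{z}_{L^p}$, without any auxiliary estimate on $w$ itself. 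You instead apply $\RSZ$ to the mild formula and exploit that $\RSZ\Proj_H\div$ is a zeroth-order Calder\'on--Zygmund operator; the price is that the bilinear term now involves $\norm{w}_{L^p}$ rather than $\norm{\RSZ w}_{L^p}$, which you handle via the companion bound $\norm{w(t)}_{L^p}\les(t-t_0)^{1-1/(2\alpha)}\norm{\RR_q}_{L^\infty L^p}$ obtained by a second bootstrap. Your argument is correct and avoids the commutator algebra entirely, trading it for one extra (routine) bootstrap; the paper's approach is self-contained in $z$ but pays with the identity computations. Both rely on the identical smallness mechanism $L(t_1-t_0)^{1-1/(2\alpha)}\ll 1$.
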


In particular, letting 
\begin{align}
p_0  =  1 +  \frac{\eps_R}{32} \in (1,\sfrac 54)\,,
\label{eq:p0:cond:1}
\end{align}
from the bounds \eqref{eq:v:diff:Lp}--\eqref{eq:Rv:diff:Lp} we obtain the following stability estimate:
\begin{corollary}
\label{cor:stability}
Fix $\alpha  \in [1, \sfrac 54)$. Assuming that $a \geq 1$ is sufficiently large, depending only on $\eps_R$, and if $t_1 \in (t_0,T]$ obeys \eqref{eq:t1:assume}, then we have the bounds
\begin{subequations}
\begin{align}
\norm{v - v_q }_{L^\infty([t_0,t_1];L^2)} & \leq  |t_1 -t_0| \lambda_q^{5}
\label{eq:v:diff:L2}
\\ % was |t_1 -t_0| \lambda_q^{\frac{14}{3}}
\norm{\RSZ(v-v_q) }_{L^\infty([t_0,t_1];L^1)} & \leq   |t_1 -t_0| \lambda_q^{-  \frac{3}{4} \eps_R} \delta_{q+1} \, .
\label{eq:Rv:diff:L1}  
\end{align}
\end{subequations}
\end{corollary}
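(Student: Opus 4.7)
The plan is to deduce Corollary~\ref{cor:stability} directly from Proposition~\ref{prop:stability} by specializing the exponent $p$ to two convenient values and inserting the inductive bounds \eqref{eq:Rq:L1}--\eqref{eq:Rq:H3:x} on the Reynolds stress. Since \eqref{eq:t1:assume} is precisely the hypothesis required to invoke Proposition~\ref{prop:stability}, no extra work on the time interval is needed; the argument is a straightforward combination of \eqref{eq:v:diff:Lp}--\eqref{eq:Rv:diff:Lp} with standard interpolation and Sobolev embedding on $\T^3$.

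For \eqref{eq:v:diff:L2} I would apply \eqref{eq:v:diff:Lp} with $p=2$, which gives
\[
\|v(t)-v_q(t)\|_{L^2} \;\leq\; C\,|t-t_0|\,\||\nabla|\RR_q\|_{L^\infty([t_0,t_1];L^2)}.
\]
The crude bound $\||\nabla|\RR_q\|_{L^2}\leq \|\RR_q\|_{H^3}\leq \lambda_q^7$ is too weak, so I would interpolate by a Littlewood--Paley frequency split at the optimal threshold, yielding
\[
\|\RR_q\|_{\dot H^1} \;\lesssim\; \|\RR_q\|_{L^1}^{\sfrac{4}{9}}\,\|\RR_q\|_{\dot H^3}^{\sfrac{5}{9}}.
\]
Inserting \eqref{eq:Rq:L1}, \eqref{eq:Rq:H3:x} and the uniform bound $\delta_{q+1}\leq \lambda_1^{3\beta}$ gives $\||\nabla|\RR_q\|_{L^2}\lesssim \lambda_q^{35/9}$, which is dominated by $\lambda_q^{5}$ once $a$ is large enough to absorb the interpolation constant.

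For \eqref{eq:Rv:diff:L1} I would apply \eqref{eq:Rv:diff:Lp} with $p=p_0=1+\sfrac{\eps_R}{32}\in(1,\sfrac{5}{4})$, namely
\[
\|\RSZ v(t)-\RSZ v_q(t)\|_{L^{p_0}} \;\leq\; C(p_0)\,|t-t_0|\,\|\RR_q\|_{L^\infty([t_0,t_1];L^{p_0})}.
\]
Since $\T^3$ has finite measure, the left-hand side dominates $\|\RSZ v - \RSZ v_q\|_{L^1}$ up to a fixed constant. On the right-hand side I would use log-convexity of $L^p$-norms combined with the three-dimensional Sobolev embedding $H^3(\T^3)\hookrightarrow L^\infty(\T^3)$:
\[
\|\RR_q\|_{L^{p_0}}\;\leq\;\|\RR_q\|_{L^1}^{\sfrac{1}{p_0}}\,\|\RR_q\|_{L^\infty}^{1-\sfrac{1}{p_0}}\;\lesssim\;\bigl(\lambda_q^{-\eps_R}\delta_{q+1}\bigr)^{\sfrac{1}{p_0}}\lambda_q^{7(1-\sfrac{1}{p_0})}.
\]
A short arithmetic using $1-\sfrac{1}{p_0}=\sfrac{\eps_R}{32+\eps_R}$ collapses the $\lambda_q$-exponent to $-\sfrac{25\eps_R}{32+\eps_R}$, strictly smaller than $-\sfrac{3\eps_R}{4}$ for $\eps_R$ small. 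The leftover factor $\delta_{q+1}^{1/p_0-1}$ contributes at worst a power $\lambda_{q+1}^{O(\beta\eps_R)}=\lambda_q^{O(b\beta\eps_R)}$, which is absorbed since $b\beta\ll 1$, and the overall constant is absorbed into the surplus gap between $\sfrac{25\eps_R}{32+\eps_R}$ and $\sfrac{3\eps_R}{4}$ by taking $a$ sufficiently large depending only on $\eps_R$.

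The only nontrivial step is verifying that the interpolation exponent $-\sfrac{25\eps_R}{32+\eps_R}$ actually beats the target $-\sfrac{3\eps_R}{4}$ with enough margin to swallow the constants; this is where the specific choice $p_0=1+\sfrac{\eps_R}{32}$ in \eqref{eq:p0:cond:1} is used, and is the only place $\eps_R$-smallness enters the argument.
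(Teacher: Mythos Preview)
Your proposal is correct and follows essentially the same route as the paper: apply \eqref{eq:v:diff:Lp} with $p=2$ and \eqref{eq:Rv:diff:Lp} with $p=p_0=1+\sfrac{\eps_R}{32}$, then control the right-hand sides via interpolation between $\norm{\RR_q}_{L^1}$ and $\norm{\RR_q}_{H^3}$. The only cosmetic difference is that for \eqref{eq:v:diff:L2} you invoke the sharp Gagliardo--Nirenberg exponents $(\sfrac{4}{9},\sfrac{5}{9})$ whereas the paper composes two cruder interpolations to obtain $(\sfrac{1}{3},\sfrac{2}{3})$; your bound $\lambda_q^{35/9}$ is in fact tighter than the paper's $\lambda_q^{14/3}$, and both sit comfortably below $\lambda_q^{5}$.
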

\begin{proof}[Proof of Corollary~\ref{cor:stability}]
We show that estimates \eqref{eq:v:diff:Lp}--\eqref{eq:Rv:diff:Lp} imply the bounds \eqref{eq:v:diff:L2}--\eqref{eq:Rv:diff:L1}. Recall the stress $\RR_q$ has zero mean. For $p\in (1,2]$ and $\delta \in [0,1]$ by interpolation we have the inequalities $\norm{|\nabla|^\delta f}_{L^p} \les \norm{f}_{L^p}^{1 - \frac{\delta}{3}} \norm{f}_{\dot W^{3,p}}^{\frac{\delta}{3}}$ and $\norm{f}_{L^p} \les \norm{f}_{L^1}^{\frac{1}{p}} \norm{f}_{L^\infty}^{1 - \frac 1p}$. Moreover, since $H^3 \subset L^\infty$ and $H^3 \subset W^{3,p}$, we obtain the Gagliardo-Nirenberg-type inequality
\begin{align}
\norm{ |\nabla|^\delta \RR_q}_{L^p} 
\les \norm{\RR_q}_{L^1}^{\gamma} \norm{\RR_q}_{H^3}^{1-\gamma}, \qquad \mbox{with} \qquad \gamma = \frac{1}{p} - \frac{\delta}{3p} \, .
\label{eq:GNS:1}
\end{align}
The implicit constant depends only on $p$ and $\delta$.

In order to prove \eqref{eq:v:diff:L2}, we use \eqref{eq:v:diff:Lp}
and apply estimate \eqref{eq:GNS:1} with $\delta = 1$ and $p=2$. We obtain from \eqref{eq:Rq:L1}--\eqref{eq:Rq:H3:x} that
\begin{align}\notag
\norm{|\nabla| \RR_q}_{L^2} \les (\lambda_q^{-\eps_R} \delta_{q+1})^{\frac 12 - \frac{1}{6}} \lambda_q^{7  \left( \frac12 + \frac{1}{6}\right)} \, ,
\end{align}
from which estimate \eqref{eq:v:diff:L2} follows, since $\delta_{q+1}\leq \lambda_1^{\beta}$, and $\beta$ is sufficiently small. The leftover power of $\lambda_q$ may be used to absorb any constants. 

Similarly, in order to prove \eqref{eq:Rv:diff:L1}, we use \eqref{eq:Rv:diff:Lp}, the bound \eqref{eq:GNS:1} with $\delta = 0$  and $p=p_0$, and the embedding $L^{p_0} \subset L^1$, to obtain 
\begin{align}
\norm{\RR_q}_{L^1} \les \norm{\RR_q}_{L^{p_0}} 
&\les (\lambda_q^{-\eps_R} \delta_{q+1})^{\frac{1}{p_0}} (\lambda_q^{7})^{\frac{p_0-1}{p_0}} \notag\\
&\les (\lambda_q^{-\eps_R} \delta_{q+1})^{1 - \frac{(p_0-1)}{p_0}} \lambda_q^{ \frac{ 7 (p_0-1)}{p_0}} \notag\\
&= \lambda_q^{-\frac 34 \eps_R} \delta_{q+1} \left( \lambda_q^{-\frac{\eps_R}{4}}   \left( \delta_{q+1}^{-1} \lambda_q^{\eps_R + 7} \right)^{\frac{p_0-1}{p_0}} \right) \notag \\
&\leq \lambda_q^{-\frac 34 \eps_R} \delta_{q+1}  \lambda_q^{-\frac{\eps_R }{4} + (p_0-1) (\eps_R  +  7+ 2 \beta b)} \notag\\
&\leq \lambda_q^{-\frac 34 \eps_R} \delta_{q+1}  \lambda_q^{-\frac{\eps_R }{4} + 8 (p_0-1) }\, .\notag
\end{align}
In the last inequality above  we have used the definitions of $\delta_{q+1}$ and  $\lambda_q$, and the fact that $p_0 \geq 1$. Estimate \eqref{eq:Rv:diff:L1} follows from  the assumption \eqref{eq:p0:cond:1} on $p_0$, upon using the leftover power of $\lambda_q$ to absorb the implicit constants.
\end{proof}

\begin{proof}[Proof of Proposition~\ref{prop:stability}]
For simplicity, by temporal translation invariance it is sufficient to consider the case $t_0=0$. 

In order to prove \eqref{eq:v:diff:Lp} we let $u = v_q - v$ and $q = p_q - p$. Then $\div u =0$, $u\vert_{t=0} = 0$, and $u$ obeys the  equation
\begin{align}
\partial_t u + (-\Delta)^\alpha u = {\mathbb P} \div \RR_q - {\mathbb P} \div(v \otimes u + u \otimes v_q)\,,
\label{eq:u:diff:evo:2}
\end{align} 
where ${\mathbb P}$ is the Leray projector.  Then, since $u(0) = 0$ the solution of \eqref{eq:u:diff:evo:2} may be written in integral form as
\begin{align}
u(t) = \int_0^t e^{-(t-s) (-\Delta)^\alpha} {\mathbb P} \div \left( \RR_q -  v \otimes u - u \otimes v_q\right)(s) ds\,.
\label{eq:u:model:mild}
\end{align}

Next, we use that that for $p\in [1,2]$, $t>0$,  and any periodic function $\phi$ of zero mean we have that
\begin{subequations}
\begin{align}
\norm{e^{- t(-\Delta)^\alpha} \phi}_{L^p} &\les  \norm{\phi}_{L^p}
\label{eq:z:model:heat:1} \\
\norm{\nabla e^{- t(-\Delta)^\alpha} \phi}_{L^p} &\les \frac{1}{t^{\frac{1}{2\alpha}}} \norm{\phi}_{L^p},
\label{eq:z:model:heat:2}
\end{align}
\end{subequations}
where the implicit constant only depends on $\alpha$.
These estimates follow from $L^1$ bounds for the Green's function of the fractional heat equation. We will also frequently use the Gagliardo-Nirenberg estimates 
\begin{subequations}
\begin{align}
\norm{\nabla \phi}_{L^\infty} &\les  \norm{\phi}_{L^2}^{\sfrac 16} \norm{\phi}_{\dot{H}^3}^{\sfrac 56}
\label{eq:GNS:2} \\
\norm{\phi}_{L^\infty} &\les  \norm{\phi}_{L^2}^{\sfrac 12} \norm{\phi}_{\dot{H}^3}^{\sfrac 12}
\label{eq:GNS:3}
\end{align}
\end{subequations}
which hold for zero-mean periodic functions $\phi$.

We return to \eqref{eq:u:model:mild} and obtain that 
\begin{align}
\norm{u(t)}_{L^p} 
&\leq \int_0^t \norm{ e^{-(t-s) (-\Delta)^\alpha} {\mathbb P} \div \left( \RR_q -  v \otimes u - u \otimes v_q\right)(s)}_{L^p} ds 
\notag\\
&\les \int_0^t \norm{|\nabla| \RR_q(s)}_{L^p} + \frac{1}{(t-s)^{\frac{1}{2\alpha}}} \norm{ \left(  v \otimes u + u \otimes v_q\right)(s)}_{L^p} ds 
\notag\\
&\leq C_1 \int_0^t   \norm{|\nabla| \RR_q(s)}_{L^p} + \frac{1}{(t-s)^{\frac{1}{2\alpha}}} \left( \norm{v(s)}_{L^\infty} + \norm{v_q(s)}_{L^\infty} \right) \norm{u(s)}_{L^p}  ds,
\label{eq:u:model:evo:1}
\end{align}
for a suitable constant $C_1>0$ which only depends on $p_0$, since $p \in [p_0,2]$ and $\alpha \in [1,\sfrac 54]$. 
Next, we claim that if $t_1>0$ is chosen sufficiently small, depending on $\norm{v}_{L^\infty}$ and $\norm{v_q}_{L^\infty}$, then we have 
\begin{align}
\norm{u(t)}_{L^p} \leq 2 C_1 t \norm{|\nabla| \RR_q(s)}_{L^\infty([0,t_1];L^p)} \qquad \mbox{for all} \qquad t\in (0,t_1].
\label{eq:u:model:want}
\end{align}
This estimate follows from Gr\"{o}nwall's inequality, using the following bootstrap argument. Assuming that the bound \eqref{eq:u:model:want} holds, we claim that the same estimate holds with the constant $2C_1$ replaced by the smaller constant $3C_1/2$. Indeed, inserting \eqref{eq:u:model:want} in \eqref{eq:u:model:evo:1} we obtain
\begin{align}
\frac{\norm{u(t)}_{L^p}}{2 C_1 t   \norm{|\nabla| \RR_q(s)}_{L^\infty([0,t_1];L^p)}}
&\leq \frac{1}{2} + \frac{1}{t} \left( \norm{v}_{L^\infty} + \norm{v_q}_{L^\infty} \right) \int_0^t \frac{s\; ds}{(t-s)^{\frac{1}{2\alpha}}}
\notag\\
&\leq \frac{1}{2} + \frac{2\alpha}{2\alpha - 1} t^{1- \frac{1}{2\alpha}} \left( \norm{v}_{L^\infty} + \norm{v_q}_{L^\infty} \right).
\label{eq:u:model:evo:2}
\end{align}
Thus if we ensure that 
\begin{align}
4   t_1^{\frac{1}{2} + \frac{\alpha-1}{2\alpha}} \left( \norm{v}_{L^\infty} + \norm{v_q}_{L^\infty} \right) \leq \frac 14\,,
\label{eq:u:model:t:cond:1}
\end{align} 
then \eqref{eq:u:model:evo:2} shows that \eqref{eq:u:model:want} holds with constant $3C_1/2$, as desired.
However, by \eqref{eq:GNS:3} we know that  
\begin{align}
  \norm{v}_{L^\infty} + \norm{v_q}_{L^\infty}  &\leq 
 C_1  \left( \norm{v}_{L^2}^{\sfrac 12}\norm{v}_{H^3}^{\sfrac 12} + \norm{v_q}_{L^2}^{\sfrac 12}\norm{v_q}_{H^3}^{\sfrac 12} \right)\notag
\end{align}
for some universal constant $C_1>0$, and further, using \eqref{eq:v:L2:x}, \eqref{eq:v:H3:x}, \eqref{eq:v:L2:1} and \eqref{eq:v:H3:1}, we obtain that 
\begin{align}
  \norm{v}_{L^\infty} + \norm{v_q}_{L^\infty}  
  \leq  C_1  \left( \norm{v_0}_{L^2}^{\sfrac 12} (2 \norm{v_0}_{H^3})^{\sfrac 12} + \norm{v_q}_{L^2}^{\sfrac 12}\norm{v_q}_{H^3}^{\sfrac 12} \right) 
  \leq 4 C_1 \delta_0^{\sfrac 14} \lambda_q^{2}\,.\notag
\end{align}
To conclude, we use \eqref{eq:t1:assume}, which shows that the left side of \eqref{eq:u:model:t:cond:1} is bounded from above by
\begin{align}
4  (\delta_0^{-1} \lambda_q^{-4})^{\frac 12 + \frac{\alpha-1}{2\alpha}} 4 C_1 \delta_0^{\sfrac 14} \lambda_q^{2} = 16  C_1  \delta_0^{-\sfrac 14} (\delta_0 \lambda_q^{4})^{- \frac{\alpha-1}{2\alpha}} \leq 16  C_1  \delta_0^{-\sfrac 14}  \leq \frac 14\,,\notag
\end{align}
by letting $a$, and hence $\delta_0$, be sufficiently large. Here we have used that $\alpha \geq 1$, and that $\delta_0, \lambda_q \geq 1$. Thus, we have shown that \eqref{eq:u:model:want} holds.

In order to prove \eqref{eq:Rv:diff:Lp} we denote
\begin{align}
z = \Delta^{-1} \curl u\,.\notag
\end{align}
Note that since $\div u =0$ we have $\curl z = - u$, and using the Calder\'on-Zygmund inequality we have $\norm{\RSZ u(t)}_{L^p} \les \norm{z(t)}_{L^p}$. Thus our goal is to obtain $L^p$ estimates for $z(t)$.  We apply $\Delta^{-1} \curl$ to the equation obeyed by $u$ (it is convenient to rewrite \eqref{eq:u:diff:evo:2} without Leray projectors, and add a pressure gradient term, which is then annihilated by the $\curl$ operator) and obtain
\begin{align}
&\partial_t z + v \cdot \nabla z  +(- \Delta)^\alpha  z  \notag\\
&= \Delta^{-1} \curl \div \RR_q + [ \Delta^{-1} \curl , v \cdot \nabla ] \curl z + \Delta^{-1} \curl (\curl z \cdot \nabla v_q) \notag\\
&= \Delta^{-1} \curl \div \RR_q +  \Delta^{-1} \curl \div \left(  (z\times \nabla) v  \right) +  \Delta^{-1} \nabla \div  \left( ( z \cdot \nabla) v\right)    + \Delta^{-1} \curl \div \left( ((z\times \nabla) v_q)^T \right).
\label{eq:z:diff:evo}
\end{align} 
For the last term on the right side of in \eqref{eq:z:diff:evo} we have used the identity 
\begin{align}
(\curl z  \cdot \nabla) v_q = \div \left( ((z \times \nabla) v_q)^T \right)\,, \notag
\end{align}
which written for the $i^{th}$ component is
\begin{align}
((\curl z  \cdot \nabla v_q)^i = \epsilon_{jkl} \partial_k z^l \partial_j v_q^i = \partial_k (\epsilon_{jkl} z^l \partial_j v^i_q) - \epsilon_{jkl} z^l \partial_{j}\partial_{k} v_q^i = \partial_k (\epsilon_{klj} z^l \partial_j v^i_q) =: \partial_k ((z \times \nabla) v_q )^{ki}.\notag
\end{align} 
Here we used that the transposition of two indices in $\epsilon_{jkl}$ results in a $(-1)$ factor. 
Moreover, we have also spelled out the commutator term on the right side of \eqref{eq:z:diff:evo}  as 
\begin{align}
[\Delta^{-1} \curl, v\cdot \nabla] \curl z = \Delta^{-1} \curl \div \left( (z\times \nabla) v \right) + \Delta^{-1} \nabla \div \left( ( z \cdot \nabla) v\right)   \,,\notag
\end{align}
which written for the $i^{th}$ component is
\begin{align}
\left([\Delta^{-1} \curl, v\cdot \nabla] \curl z \right)^i 
&= \epsilon_{ijk} \Delta^{-1} \partial_j \left( v^m \partial_m (\curl z)^k \right) + v^m \partial_m z^i \notag\\
&= \epsilon_{ijk} \epsilon_{kln} \Delta^{-1} \partial_j \left( v^m \partial_{m}\partial_l z^n \right) + v^m \partial_m z^i \notag\\
&= - \epsilon_{ijk} \epsilon_{kln} \Delta^{-1} \partial_j \partial_{m} \left( \partial_l v^m  z^n \right) + \epsilon_{ijk} \epsilon_{kln} \Delta^{-1} \partial_j \partial_l \left( v^m \partial_{m} z^n \right) + v^m \partial_m z^i \notag\\
&=  \Delta^{-1} \epsilon_{ijk} \partial_j \left( \epsilon_{knl}   \partial_{m} \left( \partial_l v^m  z^n \right) \right) -  \epsilon_{ijk} \epsilon_{nlk}   \Delta^{-1} \partial_j \partial_l \left( v^m \partial_{m} z^n \right) + v^m \partial_m z^i \notag\\
&= \Delta^{-1} \epsilon_{ijk} \partial_j \left( \partial_m \left( \epsilon_{knl}   z^n \partial_l v^m    \right) \right) + \Delta^{-1} \partial_i \partial_n  (v^m \partial_m z^n) \notag\\
&=  \Delta^{-1} \epsilon_{ijk} \partial_j \left( \partial_m \left( \epsilon_{knl}   z^n \partial_l v^m    \right) \right)  + \Delta^{-1}  \partial_i \partial_m  ( z^n \partial_n v^m )\,.\notag
\end{align}
Here we have also used that $\epsilon_{ijk} = 0$ if two of the indices $i,j$, or $k$ repeat, and that $ \epsilon_{ijk} \epsilon_{nlk}  =   \delta_{in} \delta_{jl} - \delta_{il} \delta_{jn} $, where the $\delta$'s refer to the Kronecker symbol. 
 
Using \eqref{eq:z:diff:evo}, upon placing the $v\cdot \nabla z = \div(v \otimes z)$ term on the right side, and using that  $z(t_0) = 0$, the solution to \eqref{eq:z:diff:evo} may be written in integral form as
\begin{align}
z(t) 
&= \int_0^t e^{-(t-s)(-\Delta)^\alpha} \left( \Delta^{-1} \curl \div \RR_q  +   \Delta^{-1} \curl \div \left( ((z\times \nabla) v_q)^T \right)  - \div (v \otimes z)\right)(s) ds \notag\\
&\qquad + \int_0^t e^{-(t-s)(-\Delta)^\alpha} \left(   \Delta^{-1} \curl \div \left( (z\times \nabla) v  \right) + \Delta^{-1} \nabla \div  \left( ( z \cdot \nabla) v\right)   \right)(s) ds\,.
\label{eq:z:model:mild}
\end{align}
From \eqref{eq:z:model:heat:1}--\eqref{eq:z:model:heat:2} and the boundedness of Calder\'on-Zygmund operators on $L^p$, similarly to \eqref{eq:u:model:evo:1} we conclude that
\begin{align}
\norm{z(t)}_{L^p} &\les \int_0^t \norm{\RR_q(s)}_{L^p} +   \norm{ \left( (z\times \nabla) v_q\right) (s)}_{L^p} + \frac{1}{(t-s)^{\frac{1}{2\alpha}}} \norm{(v \otimes z)(s)}_{L^p}  +   \norm{ \left( (z\times \nabla) v\right)(s)}_{L^p} \notag\\
&\qquad+  \norm{ \left( (z \cdot \nabla) v\right)(s)}_{L^p} ds \notag\\
&\leq C_1 t \norm{\RR_q}_{L^\infty([0,t_1];L^p)}  +  C_1 \left(  \norm{\nabla v_q}_{L^\infty} 
+   \norm{\nabla v}_{L^\infty} \right) \int_0^t \norm{z(s)}_{L^p} ds + C_1 \norm{v}_{L^\infty} \int_0^t \frac{\norm{z(s)}_{L^p}}{(t-s)^{\frac{1}{2\alpha}}}   ds
\label{eq:z:model:have:1}
\end{align}
where $C_1$ depends only on $p_0$ and $\alpha$, since $p\in [p_0,\alpha]$.
Next we claim that if $t_1$ is chosen sufficiently small, then 
\begin{align}
\norm{z(t)}_{L^p} \leq 2 C_1 t \norm{\RR_q}_{L^\infty(0,t_1;L^p)} \qquad \mbox{for all} \qquad t\in (0,t_1]\,.
\label{eq:z:model:want}
\end{align}
The argument is similar to the one for the bound for $u(t)$, so we only sketch the details. Let us assume that \eqref{eq:z:model:want} holds. Then from \eqref{eq:z:model:have:1}  we obtain  
\begin{align}
\frac{\norm{z(t)}_{L^p}}{2 C_1 t \norm{\RR_q}_{L^\infty(0,t_1;L^p)}}
&\leq \frac 12  +    t \left(  \norm{\nabla v_q}_{L^\infty} 
+   \norm{\nabla v}_{L^\infty} \right)   +2   t^{1 - \frac{1}{2\alpha}} \norm{v}_{L^\infty} .
\label{eq:z:model:have:2}
\end{align}
Therefore, if we ensure that $t_1$ is small enough so that 
\begin{align}
t_1  \left(  \norm{\nabla v_q}_{L^\infty} 
+   \norm{\nabla v}_{L^\infty} \right) + t_1^{1 - \frac{1}{2\alpha}} \norm{v}_{L^\infty} \leq \frac{1}{5},
\label{eq:z:model:T:cond}
\end{align}
then \eqref{eq:z:model:have:2} implies that 
\begin{align}
\frac{\norm{z(t)}_{L^p}}{2 C_1 t \norm{\RR_q}_{L^\infty(0,t_1;L^p)}} \leq \frac 12 + \frac 25 < 1\notag
\end{align}
which shows that the bootstrap assumption was justified, and thus \eqref{eq:z:model:want} holds on $[0,T]$.
Denote by $C_1$ the universal constant in the Gagliardo-Nirenberg inequalities \eqref{eq:GNS:2}--\eqref{eq:GNS:3}. By also appealing to \eqref{eq:v:L2:x}--\eqref{eq:v:H3:x}, \eqref{eq:v:L2:1}--\eqref{eq:v:H3:1}, and our assumption \eqref{eq:t1:assume} for $t_1$, we obtain that the left side of \eqref{eq:z:model:T:cond} is bounded from above by 
\begin{align}
&C_1 t_1 \left(\norm{v_q}_{L^2}^{\frac 16} \norm{v_q}_{H^3}^{\frac 56} + \norm{v}_{L^2}^{\frac 16} \norm{v}_{H^3}^{\frac 56} \right) + C_1 t_1^{\frac 12 + \frac{(\alpha-1)}{2\alpha}} \norm{v}_{L^2}^{\frac 12} \norm{v}_{H^3}^{\frac 12} 
\notag\\
&\qquad \leq 4 C_1 t_1  \delta_0^{\frac{1}{12}} \lambda_q^{\frac{10}{3}} + 2 C_1 t_1^{\frac 12 + \frac{(\alpha-1)}{2\alpha}} \delta_0^{\frac 14}  \lambda_q^{2} \notag\\
&\qquad \leq 4 C_1 \delta_0^{-\frac{11}{12}} \lambda_q^{-\frac{2}{3}} + 2  C_1 \delta_0^{- \frac 14}  (\delta_0 \lambda_q^{4})^{- \frac{(\alpha-1) }{2\alpha}}
  \leq 6 C_1  \delta_0^{- \frac 14}  
  \leq \frac{1}{5}\notag
\end{align}
once we ensure that $a$, and hence $\delta_0$  is sufficiently large.  This concludes the proof of \eqref{eq:z:model:want}. 
\end{proof}

\subsection{Proof of Proposition~\ref{prop:bar:vq:Rq}}\label{ss:proof_prop}
We first define a $C^\infty$ smooth partition of unity $\{\eta_i\}_{i=0}^{n_{q+1}}$, such that $0 \leq \eta_i \leq 1$, and with 
\begin{align}
\sum_{i=0}^{n_{q+1}} \eta_i(t) = 1, \qquad \mbox{for every} \qquad  t \in [\sfrac{T}{3},\sfrac{2T}{3}]\, .
\label{e:chi:partition}
\end{align}
Denoting 
\begin{align}
t_i = \vartheta_{q+1}i \, ,\notag
\end{align}
this may be achieved by letting $\eta_i$ also have the following properties:
\begin{enumerate}[label=(\roman*)]
\item $\eta_i$ has support in $[t_i,t_{i+1} + \tau_{q+1}]$
\item $\eta_i$ is identically $1$ on $[t_i + \tau_{q+1},t_{i + 1}]$
\item $\eta_i$ satisfies the estimate
\begin{align}
\norm{\partial_t^M \eta_{i}}_{L^\infty} \les \tau_{q+1}^{-M} \, ,
\label{eq:dt:eta}
\end{align} where the implicit constant is independent of $\tau_{q+1}$  $\vartheta_{q+1}$, and $i$. 
\end{enumerate}  
As a consequence of the above properties, we have that $\eta_i \eta_j = 0$ whenever $|i-j| > 1$, and 
\begin{align}
\supp (\eta_i \eta_{i-1}) \subset [t_i,t_i+ \tau_{q+1}] \, .\notag
\end{align}
Having constructed the partition of unity $\{ \eta_i\}_{i=0}^{n_{q+1}}$, we next construct exact solutions $v_i$ of the Navier-Stokes equation for suitably defined datum.

For every $1 \leq i \leq n_{q+1}$ we define $v_i(x,t)$ to be the unique smooth solution of the Cauchy problem for Navier-Stokes equation \eqref{eq:NSE} with initial condition equal to $v_q$ at $t_{i-1}$:
\begin{subequations}
\label{eq:vi:evo}
\begin{align}
\partial_t v_i + \div (v_i \otimes v_i) + \nabla p_i  + (-\Delta)^{\alpha} v_i &= 0 \\
\div v_i &=0\\
v_i(t_{i-1}) &= v_q(t_{i-1}) \,.
\end{align}
\end{subequations}
In view of \eqref{eq:v:L2:x}--\eqref{eq:v:H3:x}, and Proposition~\ref{prop:local:existence}
this solution $v_i$ is uniquely defined  and obeys the estimates 
\begin{subequations}
\begin{align}
\norm{v_i(t)}_{L^2} \leq \norm{v_q(t_{i-1})}_{L^2} &\leq 2 \delta_0^{\sfrac 12} - \delta_q^{\sfrac 12}
\label{eq:vi:L2}\\
\norm{v_i(t)}_{H^3} \leq 2 \norm{v_q(t_{i-1})}_{H^3} &\leq 2 \lambda_q^{4}
\label{eq:vi:H3}\\
|t-t_{i-1}|^{\frac{N}{2\alpha}+M} \norm{\partial_t^M D^N v_i(t)}_{H^3} 
&\les  \lambda_q^{4}
\label{eq:vi:H3:N}
\end{align}
\end{subequations}
for all $N\geq 0$, $M \in \{0,1\}$ and all 
\begin{align}
t > t_{i-1} \qquad \mbox{such that} \qquad t - t_{i-1} \leq  \frac{c}{4 \lambda_q^{4} \delta_0^{\sfrac 12}} \leq \frac{c}{ \lambda_q^{4} (1+ 2 \delta_0^{\sfrac 12})^{\frac{1}{2\alpha -1}}}
\label{eq:ti:max}
\end{align}
where $c\in (0,1)$ is the universal constant from \eqref{eq:t:weird}, and $\alpha \geq 1$. Note that the definitions \eqref{eq:delta:q:def}, \eqref{eq:theta:q:def}, and the fact that $\beta \leq 1$,  imply that
\begin{align}
\vartheta_{q+1} = \frac{\delta_{q+1}^{\sfrac 12}}{\lambda_q^{7} } = \frac{1}{\lambda_q^{4} \delta_0} \frac{\delta_0 \delta_{q+1}^{\sfrac 12}}{\lambda_q^{3} } \leq  \frac{1}{\lambda_q^{4} \delta_0} \frac{\lambda_1^{\frac{3\beta}{2}}}{\lambda_q^{3} } \leq \frac{1}{\lambda_q^{4} \delta_0}   \, .
\label{eq:tau:theta:cond:1}
\end{align}
Therefore, assuming that $\delta_0 = \lambda_1^{3\beta} \lambda_0^{-2\beta} \geq \lambda_0^\beta$ is sufficiently large, depending on the universal constant $c$, by \eqref{eq:tau:theta:cond:1} we have that
\begin{align}\notag
3 \vartheta_{q+1} \leq \frac{c}{8 \lambda_q^{4} \delta_0^{\sfrac 12}} \, ,
\end{align}
which is consistent with \eqref{eq:ti:max}. 
Therefore for all $1\leq i \leq n_{q+1}$ the exact solutions $v_i(x,t)$ are smooth and well-defined for all $t \in (t_{i-1}, t_{i+2}] \supset \sup(\eta_i)$. Moreover, since
\begin{align}
t\in \supp (\eta_i) \qquad  \Rightarrow \qquad \vartheta_{q+1} \leq t - t_{i-1} \leq   3 \vartheta_{q+1}  \, ,\notag
\end{align}
from \eqref{eq:vi:H3:N} we obtain the bound 
\begin{align}
\sup_{t \in \supp(\eta_i) } \norm{\partial_t^M D^N v_i(t)}_{H^3} 
&\les \lambda_q^{4} \vartheta_{q+1}^{-\frac{N}{2\alpha}-M}, \qquad \mbox{for} \qquad 1\leq i \leq n_{q+1} \, ,
\label{eq:vi:high:deriv}  
\end{align}
where the implicit constant  depends only on $N\geq 0$ and $M \in \{0,1\}$.

At this stage we glue the solutions $v_i$ together in order to construct $(\overline v_q, {\mathring {\bar R}}_{q})$.   
We define the divergence-free (note that the cutoffs $\eta_i$ are only functions of time) velocity and the interpolated pressure as
\begin{subequations}
\begin{align}
\overline v_q(x,t)&=\sum_{i=1}^{n_{q+1}} \eta_i(t) v_i(x,t) \, , \quad \mbox{for all} \quad t \in [\sfrac{T/3}, \sfrac{2T}{3}] \, ,\label{eq:bar:vq:def} \\
\overline p_q^{(1)}(x,t)&=\sum_{i=1}^{n_{q+1}} \eta_i(t) p_i(x,t)\, , \quad \mbox{for all} \quad t \in [\sfrac{T/3}, \sfrac{2T}{3}] \, ,\notag
\end{align}
\end{subequations}
where $p_i$ is the pressure associated to the exact solution $v_i$.
Also we let
\begin{subequations}
\begin{align}
\overline v_q(x,t)&=v_q(x,t) = v_0(x,t) \, , \quad \mbox{for all} \quad t \in [0,\sfrac{T}{3}] \cup [\sfrac{2T}{3},T] \, ,
\label{eq:bar:vq:def:0} \\
\overline p_q^{(1)}(x,t)&=p_q(x,t) = p_0(x,t) \, , \quad \mbox{for all} \quad t \in [0,\sfrac{T}{3}] \cup [\sfrac{2T}{3},T] \, .\notag
\end{align}
\end{subequations}
Here we have used that $ [0,\sfrac{T}{3}] \cup [\sfrac{2T}{3},T] = \GG^{(0)}$, and the inductive assumption \eqref{eq:vq:unchanged}.

Having defined $\bar v_q$, we next prove that \eqref{eq:vq:same} holds. For $t \in \GG^{(0)}$, this holds by construction. In view of \eqref{e:chi:partition}, it suffices to show that if for some $i \in \{1,\ldots,n_{q+1}\}$ we have $t \in \supp(\eta_i) \cap \GG^{(q)}$, then $v_i(t) = v_q(t)$. For this purpose recall by \eqref{eq:v:H3:x} and \eqref{eq:Rq:time:support} that $v_q$ is a strong solution of the Navier-Stokes equation for all $t$ such that ${\rm dist}(t,\GG^{(q)}) \leq \tau_q$. Moreover, $v_i$ solves the Cauchy problem \eqref{eq:vi:evo}, so by the uniqueness of solutions in $C^0_t H^3_x$ of the Navier-Stokes equation, we only need to ensure that ${\rm dist}(t_{i-1},\GG^{(q)}) \leq \tau_q$. This follows from the fact that $t \in \GG^{(q)}$, and $0 < t - t_{i-1} \leq 3 \vartheta_{q+1} \leq \tau_q$. The last inequality trivially holds by \eqref{eq:theta:q:def} and \eqref{eq:tau:q:def} for $q\geq 1$, and by taking $a$ sufficiently large for $q=0$. Thus, we have proven that \eqref{eq:vq:same} holds. 

At this stage we show that the set  $\BB^{(q+1)}$ defined in \eqref{eq:B:q+1:def}, and hence implicitly $\GG^{(q+1)} = [0,T] \setminus \BB^{(q+1)}$, obey  properties \ref{eq:cond:ii}--\ref{eq:cond:iv} with $q$ replaced by $q+1$.  In order to prove \ref{eq:cond:ii}, assume that  $t \in \GG^{(q)} \cap (t_i - 2 \tau_{q+1}, t_{i} + 3 \tau_{q+1})$, for some $i \in \{1,\ldots, n_{q+1}\}$. Due to \eqref{eq:Rq:time:support} we know that $\RR_q(t') = 0$ for all $|t-t'| \leq \tau_q$. Since  $\tau_q \geq 2\vartheta_{q+1} + 3 \tau_{q+1}$, which holds by \eqref{eq:theta:q:def} and \eqref{eq:tau:q:def} $q\geq 1$, and by taking $a$ sufficiently large for $q=0$, we obtain that $\RR_q \equiv 0$ on $ [t_{i-2},t_{i+1}+\tau_{q+1}] $. Hence, by the definition \eqref{eq:C:def} we have that $i,i-1 \not \in {\mathcal C}$. Thus, $t \not \in \BB^{(q+1)}$ and so $t \in \GG^{(q+1)}$ as desired.  Property \ref{eq:cond:iii} holds by definition~\eqref{eq:B:q+1:def}, since $\tau_{q+1}$ is much smaller than $\vartheta_{q+1}$.  In order to prove property \ref{eq:cond:iv}, we need to estimate the cardinality of the set ${\mathcal C}$ defined in  \eqref{eq:C:def}. By definition, if $i \in {\mathcal C}$, there exists $t\in [t_{i-1},t_{i+1}+\tau_{q+1}]$ such that $\RR_{q}(t) \neq 0$, and thus by property \eqref{eq:Rq:time:support} we have ${\rm dist}(t, \GG^{(q)}) > \tau_q$. Therefore, $\BB^{(q)} \supset (t-\tau_q,t+\tau_q) \supset [t_i,t_{i+1}]$. By the pigeonhole principle we obtain that 
\begin{align}\notag
{\rm card}(\mathcal C) \leq \frac{|\BB^{(q)}|}{\vartheta_{q+1}}.
\end{align}
Estimate \eqref{eq:mead:2} at level $q+1$ then follows from \eqref{eq:B:q+1:def}. 

At this stage we remark that   property \ref{eq:cond:v} will also hold at the end of the convex integration stage. For this purpose, we remark that in the convex integration stage we do not add a perturbation to the solutions on the good set $\GG^{(q+1)} \supset \GG^{(q)}$, i.e.\ $v_{q+1}(t) = \bar v_q(t)$ for $t \in \GG^{(q+1)}\supset \GG^{(q)}$. Assuming for the moment this feature of our construction, property \eqref{eq:vq:same} established above and the inductive \eqref{eq:vq:unchanged}   show that \eqref{eq:vq:unchanged} holds at level $q+1$.

We now derive the formula for $\supp {\mathring {\bar R}}_{q}$. Note that on $[0,\sfrac{T}{3}] \supset [t_0,t_2]$ and on $[\sfrac{2T}{3},T] \supset [t_{n_{q+1}-1},t_{n_{q+1}}]$ we have that $\bar v_q = v_q$ is a smooth solution of the Navier-Stokes equation, and hence automatically 
\begin{align}\notag
{\mathring {\bar R}}_{q} = 0 \qquad \mbox{on} \qquad [t_0,t_2] \cup [t_{n_{q+1}-1},t_{n_{q+1}}]\, .
\end{align}
For $i\geq 2$, on the interval $[t_i,t_{i+1}]$ we have
\begin{align*}
\overline v_q&=(1-\eta_{i}) v_{i-1}+ \eta_{i} v_{i}\\
\overline p_q^{(1)}&=(1-\eta_{i}) p_{i-1}+ \eta_{i} p_{i}
\end{align*}
and similarly to \cite[Section 4.2]{BDLSV17}, we obtain
\begin{align}
&\partial_t\overline v_q+\div(\overline v_q\otimes \overline v_q)+ (-\Delta)^\alpha \overline v_q+\nabla\overline p_q^{(1)}  \notag\\
&\quad= (1-\eta_{i}) \partial_t v_{i-1}+ \eta_{i}\partial_t v_{i}+\partial_t\eta_{i}(v_{i}-v_{i-1}) \notag \\
&\quad \quad +(1-\eta_{i})^2 \div\left( v_{i-1}\otimes v_{i-1} \right)+ \eta_{i}^2 \div \left(v_{i}\otimes v_{i}\right)  +\eta_{i}(1-\eta_{i})\div(v_{i-1}\otimes v_{i}+v_{i}\otimes v_{{i-1}})) \notag \\
&\quad \quad + (1-\eta_{i}) (-\Delta)^\alpha v_{i-1}+ \eta_{i} (-\Delta)^\alpha v_{i} + (1-\eta_{i}) \nabla p_{i-1}+ \eta_{i}\nabla p_{i} \notag \\
&\quad =\partial_t\eta_i(v_{i}-v_{i-1}) -\eta_{i}(1-\eta_{i}) \div((v_{i}-v_{i-1})\otimes (v_{i}-v_{i-1})).
\label{eq:bar:vq:evo}
\end{align}
We observe that $v_i-v_{i-1}$ has zero mean because   the exact solutions of the Navier-Stokes equations $v_i, v_{i-1}$ preserve their average in time, and $v_q$ has zero mean by assumption. Hence we can apply the  inverse divergence operator $\mathcal{R}$ to $v_i-v_{i-1}$ and for $i \in \{2, \ldots, n_{q+1}-1\}$ define the symmetric traceless $2$-tensor  
\begin{align}
\mathring{\overline{R}}_q&= \partial_t\eta_i\mathcal{R}(v_i-v_{i-1})-    \eta_i(1-\eta_i)(v_i-v_{i-1})\mathring{\otimes} (v_i-v_{i-1}) \, , \qquad \mbox{for all} \qquad t \in [t_{i},t_{i+1}] \, ,
\label{eq:bar:Rq:def}
\end{align}
where we denote by $a \mathring \otimes b$ the traceless part of the tensor $a\otimes b$.  
We also define the scalar pressure
\begin{align*}
\overline{p}_q &=\overline{p}_q^{(1)} -   \eta_i(1-\eta_i)\left(|v_i-v_{i-1}|^2-\int_{\T^3}|v_i-v_{i-1}|^2\,dx\right)\, , \qquad \mbox{for all} \qquad t \in [t_{i},t_{i+1}]  \, .
\end{align*}
It follows from \eqref{eq:bar:vq:evo} that the pair $(\bar v_q, \mathring{\overline{R}}_q)$ defined by \eqref{eq:bar:vq:def} and \eqref{eq:bar:Rq:def} solves the Navier-Stokes-Reynolds system \eqref{eq:NSE:Reynolds} on $[0,T]$ with associated pressure $\bar p_q$.

Next, we prove that  \eqref{eq:bar:Rq:supp} holds. Note that by construction we have $\eta_i \equiv 1$ on $[t_i +\tau_{q+1},t_{i+1}]$ for all $i \in \{0,\ldots,n_{q+1}\}$, and thus on these sets we have $\partial_t \eta_i =  \eta_i (1-\eta_i) = 0$. Therefore, by \eqref{eq:bar:Rq:def} we have that $ \mathring{\overline{R}}_q(t) = 0$ whenever $t \in [t_i+\tau_{q+1},t_{i+1}]$ for some $i$. Thus it suffices to consider sets of times of the form $(t_i, t_i +\tau_{q+1})$. If $i\in\mathcal C$ or $i-1\in\mathcal C$, then there is nothing to prove since by definition \eqref{eq:B:q+1:def}, ${\rm dist}((t_i, t_i +\tau_{q+1}), \mathcal G^{(q+1)})>2\tau_{q+1}$. Hence, consider the case $i,i-1\notin\mathcal C$.  Thus by the definition of $\mathcal C$, $\RR_{q}(t)=0$ for all $t\in [t_{i-2},t_{t+1}+\tau_{q+1}]$. Since $v_{i-1}(t_{i-2})=v_q(t_{i-2})$, $v_{i}(t_{i-1})=v_q(t_{i-1})$, then since $\RR_{q}$ vanishes on $[t_{i-2},t_{t+1}+\tau_{q+1}]$, it follows by the bounds \eqref{eq:vi:H3} and \eqref{eq:v:H3:x} and the uniqueness of strong solutions to the Navier-Stokes equations  that $v_{i-1}=v_i=v_q$ on $(t_i, t_i +\tau_{q+1})$. Thus by \eqref{eq:bar:Rq:def} we have that $\RR_{q+1}(t)=0$ for $(t_i, t_i +\tau_{q+1})$.

Since in the convex integration stage we do not change the stress on the set $\{ t \colon {\rm dist}(t, \GG^{(q+1)}) \leq \tau_{q+1} \}$, it follows from  \eqref{eq:bar:Rq:supp}  that $\RR_{q+1}(t) = \mathring{\overline{R}}_q(t) = 0 $ for all $t$ such that ${\rm dist}(t, \GG^{(q+1)}) \leq \tau_{q+1}$. Thus \eqref{eq:Rq:time:support}, and hence property \ref{eq:cond:vi},  will automatically hold at the end of the convex integration step.

In order to conclude the proof of Proposition~\ref{prop:bar:vq:Rq}, it remains to prove estimates \eqref{eq:vq:global:1}--\eqref{eq:vq:1} for $\overline v_q$ and \eqref{eq:Rq:1}--\eqref{eq:Rq:2} for $\mathring{\overline{R}}_q$.  

By  \eqref{e:chi:partition}, \eqref{eq:vi:L2}, \eqref{eq:vi:H3}, and the definition of $\bar v_q$ in \eqref{eq:bar:vq:def}, it follows that \eqref{eq:vq:global:1} and \eqref{eq:vq:global:2} hold  for all $t \in  [\sfrac{T}{3},\sfrac{2T}{3}]$. By \eqref{e:chi:partition}, for all $t \in [\sfrac{T}{3},\sfrac{2T}{3}]$  we have that 
\begin{align}
\overline{v}_q(x,t)-v_q(x,t)=\sum_{i=0}^{n_{q+1}} \eta_i(t) (v_i(x,t)-v_q(x,t)) \, ,
\label{eq:vq:minus:bar:vq}
\end{align}
and at each time $t$ at most $2$ terms in the sum are nonzero. Since $v_i$ solves \eqref{eq:vi:evo}, and since $t \in \supp(\eta_i)$ implies that \eqref{eq:ti:max} holds, we may appeal to Corollary~\ref{cor:stability}, with $t_0$ replaced by $t_{i-1}$, and $t_1$ replaced by an arbitrary $t \in \supp \eta_i$. Here we note that condition \eqref{eq:t1:assume} is satisfied on $\supp(\eta_i)$ due to \eqref{eq:ti:max}.   By \eqref{eq:v:diff:L2}, we obtain that  
\begin{align}\notag
\sup_{t \in \supp(\eta_i)} \norm{v_i(t) - v_q(t)}_{L^2} \leq 4 \vartheta_{q+1}  \lambda_q^{5} \,.
\end{align}
Since at most two terms appear in \eqref{eq:vq:minus:bar:vq}, we may use the remaining power of $\lambda_q^{-1}$ to absorb any constants, and \eqref{eq:vq:vell} follows on $[\sfrac{T}{3},\sfrac{2T}{3}]$. 
Moreover,  estimates \eqref{eq:vq:global:1}--\eqref{eq:vq:vell} hold trivially on $[0,\sfrac{T}{3}] \cup [\sfrac{2T}{3},T]$ by the inductive assumptions and definition \eqref{eq:bar:vq:def:0}. Thus, we have proven \eqref{eq:vq:global:1}--\eqref{eq:vq:vell} on $[0,T]$.  

Lastly, \eqref{eq:vq:1} follows from the definition~\eqref{eq:bar:vq:def}, the Leibniz rule, estimate \eqref{eq:dt:eta} for the time derivatives landing on the cutoff functions $\eta_i$, and estimate \eqref{eq:vi:high:deriv} for the space and time derivatives landing on the $v_i$. Here we have used that  $\tau_{q+1}^{-1} > \vartheta_{q+1}^{-1}$. Thus we have established all the desired bounds for $\overline v_q$.

In order to prove the claimed $L^1$ estimate for $\overline \BB^{(q)}$, i.e.\ \eqref{eq:Rq:1}, we appeal to the definition \eqref{eq:bar:Rq:def}.  For the first term, we use \eqref{eq:dt:eta} and again appeal to Corollary~\ref{cor:stability}, this time to estimate \eqref{eq:Rv:diff:L1}, to obtain that  
\begin{align}
\norm{\partial_t \eta_i \; \RSZ(v_i- v_{i-1}) }_{L^1} 
&\leq \norm{\partial_t \eta_i}_{L^\infty} \left( \norm{\RSZ(v_i- v_q) }_{L^\infty(\supp(\eta_i);L^1)}  + \norm{\RSZ(v_{i-1}- v_q) }_{L^\infty(\supp(\eta_i);L^1)}  \right) \notag\\
&\les \tau_{q+1}^{-1} \vartheta_{q+1}  \lambda_q^{-\frac{3 \eps_R}{4}} \delta_{q+1} \notag\\
&\leq  \frac 12 \tau_{q+1}^{-1} \vartheta_{q+1}  \lambda_q^{-\frac{\eps_R}{2}} \delta_{q+1} \, ,
\label{eq:temp:**}
\end{align}
upon using the remaining power of $\lambda_q^{-\frac{\eps_R}{4}}$ to absorb any constants. For the second term in \eqref{eq:bar:Rq:def}, we use \eqref{eq:v:diff:L2} and obtain that
\begin{align}
\norm{\eta_i (1-\eta_i) \; (v_i- v_{i-1})\mathring\otimes(v_i - v_{i-1}) }_{L^1} 
&\leq \norm{v_i - v_{i-1}}_{L^\infty(\supp (\eta_{i-1}\eta_i); L^2)}^2  \notag\\
&\leq  4 \norm{v_i - v_q}_{L^\infty(\supp (\eta_i); L^2)}^2 \notag\\
&\les \left(\vartheta_{q+1} \lambda_q^{5} \right)^2 \notag\\
&\leq  \frac 12 \tau_{q+1}^{-1} \vartheta_{q+1}  \lambda_q^{-\frac{ \eps_R}{2}} \delta_{q+1} \, .
\label{eq:temp:*}
\end{align}
Here we have used that  by $\tau_{q+1} \leq \vartheta_{q+1}$, the definition \eqref{eq:theta:q:def}, and the fact that 
$ \eps_R \leq 1$, to conclude 
\begin{align}\notag
\vartheta_{q+1} \tau_{q+1} \leq \lambda_q^{-14} \delta_{q+1}\leq \lambda_q^{-1} \lambda_{q}^{- 10 - \frac{\eps_R}{2} } \delta_{q+1} \, ,
\end{align}
and using the leftover term  $\lambda_q^{-1}$ to absorb any implicit constants in \eqref{eq:temp:*}. 
Combined, \eqref{eq:temp:**} and \eqref{eq:temp:*} prove \eqref{eq:Rq:1}. 

It remains to prove \eqref{eq:Rq:2}. We return to \eqref{eq:bar:Rq:def}. For the first term we use \eqref{eq:vi:high:deriv} and \eqref{eq:dt:eta} to obtain
\begin{align}
&\norm{\partial_t^M D^N (\partial_t \eta_i \RSZ(v_i-v_{i-1})) }_{H^3} \notag\\
&\quad \les \sum_{M'=0}^M \norm{\partial_t^{M-M'+1} \eta_i}_{L^\infty} \left(\norm{\partial_t^{M'} D^N v_i}_{L^\infty(\supp (\eta_i);H^3)} + \norm{\partial_t^{M'} D^N v_{i-1}}_{L^\infty(\supp(\eta_{i-1});H^3)}\right) \notag\\
&\quad \les \sum_{M'=0}^M \tau_{q+1}^{-M+M'-1} \lambda_q^{4} \vartheta_{q+1}^{-\frac{N}{2\alpha} - M'}  \les  \tau_{q+1}^{-M-1} \lambda_q^{4} \vartheta_{q+1}^{-\frac{N}{2\alpha} }  \,,\notag
\end{align}
since $\tau_{q+1} \leq \vartheta_{q+1}$. This bound is consistent with \eqref{eq:Rq:2}. For the second term in \eqref{eq:bar:Rq:def}, since $H^3$ is an algebra we similarly obtain from \eqref{eq:vi:high:deriv}  and \eqref{eq:dt:eta} that
\begin{align}
&\norm{\partial_t^M D^N ( \chi_i (1-\chi_i) (v_i-v_{i-1}) \otimes(v_i-v_{i-1}))}_{H^3} \notag\\
&\quad \les \sum_{M'=0}^M   \tau_{q+1}^{-M+M'} \norm{\partial_t^{M'} D^N ( (v_i-v_{i-1}) \otimes(v_i-v_{i-1}))}_{L^\infty(\supp(\chi_{i-1} \chi_i);H^3)} \notag\\
&\quad \les \sum_{M'=0}^M  \tau_{q+1}^{-M+M'} \vartheta_{q+1}^{-M'} \lambda_q^{8} \vartheta_{q+1}^{-\frac{N}{2\alpha} }  \les  \tau_{q+1}^{-M-1} \lambda_q^{4} \vartheta_{q+1}^{-\frac{N}{2\alpha} }  \,,\notag
\end{align}
where we have additionally used that $\tau_{q+1} \leq \vartheta_{q+1} \leq \lambda_q^{-4}$, in view of \eqref{eq:tau:theta:cond:1}.

To conclude the proof of Proposition~\ref{prop:bar:vq:Rq}, we note that the second inequality in  \eqref{eq:vq:1} and \eqref{eq:Rq:2}, which bounds cost of a spatial derivative by $\tau_{q+1}^{-1}$, instead of $\vartheta_{q+1}^{-\sfrac{1}{(2\alpha)}}$, follows form the fact that $\alpha \in [1,\sfrac 54)$ and $1 \leq \vartheta_{q+1}^{-1} \leq \tau_{q+1}^{-1}$.

\section{Convex integration step: the perturbation}

\subsection{Intermittent jets}\label{s:intermittent}

Let us recall the following result from~\cite{DaneriSzekelyhidi17}:
\begin{lemma}\label{l:linear_algebra} For $\alpha=1,2$, there exists subsets $\Lambda_{\alpha} \subset\mathbb S^2\cap \mathbb Q^3$ and smooth  functions $\gamma_{\xi}: \mathcal N\rightarrow \mathbb R$ such that
\begin{align*}
R=\sum_{\xi\in \Lambda_{\alpha}} \gamma^2_{\xi}(R)(\xi\otimes \xi)
\end{align*}
for every symmetric matrix $R$ satisfying $\abs{R-\Id}\leq \sfrac12$.
\end{lemma}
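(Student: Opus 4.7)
The plan is to first treat the problem at $R=\Id$ with real (not necessarily rational) directions, then perturb to rational directions, and finally extend from a single matrix to a neighborhood by a linear-algebra / implicit function argument.

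\textbf{Step 1: A positive rank-one decomposition of the identity with rational directions.} The space $\mathrm{Sym}^{3\times 3}$ of symmetric $3\times 3$ matrices is $6$-dimensional, and the rank-one tensors $\xi\otimes\xi$ for $\xi\in\mathbb S^2$ span $\mathrm{Sym}^{3\times 3}$; in fact every positive-definite matrix lies in the interior of the convex cone generated by them. First I would exhibit an explicit positive decomposition $\Id=\sum_{k=1}^N c_k \,\xi_k\otimes\xi_k$ with $c_k>0$ and $\xi_k\in\mathbb S^2$ (for instance using the standard basis vectors together with a symmetric configuration along the diagonals). Since $\mathbb Q^3\cap\mathbb S^2$ is dense in $\mathbb S^2$ (the rational points on the unit sphere are dense, via stereographic projection from a rational point), and since the map $(\xi_1,\ldots,\xi_N)\mapsto\{\xi_k\otimes\xi_k\}$ together with the condition ``the $\xi_k\otimes\xi_k$ positively span a neighborhood of $\Id$'' is open, one may perturb the $\xi_k$ to rational unit vectors while retaining a strictly positive decomposition of $\Id$. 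Collect these rational unit vectors into a finite set $\Lambda\subset\mathbb S^2\cap\mathbb Q^3$.

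\textbf{Step 2: From the point $\Id$ to a neighborhood.} Choose, among $\{\xi\otimes\xi:\xi\in\Lambda\}$, a subfamily of $6$ linearly independent elements spanning $\mathrm{Sym}^{3\times 3}$; then the linear map $L:\mathbb R^{|\Lambda|}\to\mathrm{Sym}^{3\times 3}$, $L(c)=\sum_{\xi\in\Lambda}c_\xi\,\xi\otimes\xi$, is surjective. Pick a smooth (in fact linear) right inverse $L^\dagger$, and set $c_\xi(R):=\bigl(L^\dagger R\bigr)_\xi$. By Step 1 we have $c_\xi(\Id)>0$ for all $\xi\in\Lambda$ (after adding a suitable element of $\ker L$ to $L^\dagger$, which is allowed since surjections between finite-dimensional spaces admit linear sections, and translating by the explicit positive lift from Step 1). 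Strict positivity is an open condition, so there exists an open neighborhood $\mathcal N$ of $\Id$ in $\mathrm{Sym}^{3\times 3}$ on which $c_\xi(R)>0$ for every $\xi\in\Lambda$. Define $\gamma_\xi(R):=\sqrt{c_\xi(R)}$; this is smooth on $\mathcal N$ and satisfies $R=\sum_{\xi\in\Lambda}\gamma_\xi^2(R)\,\xi\otimes\xi$ on $\mathcal N$. Finally, shrink $\mathcal N$ if necessary to ensure it contains $\{R:|R-\Id|\le\tfrac12\}$; if Step 1 is done with enough freedom (e.g.\ with $c_k$ not too small relative to the reach of the perturbation) this is automatic, and otherwise one enlarges $\Lambda$ and repeats Step 1 until the construction works on the closed ball of radius $\tfrac12$.

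\textbf{Step 3: Two disjoint families $\Lambda_1,\Lambda_2$.} The existence of two such sets follows from the same argument applied twice. Since $\mathbb Q^3\cap\mathbb S^2$ is infinite, after producing $\Lambda_1$ we can pick a second set $\Lambda_2\subset(\mathbb S^2\cap\mathbb Q^3)\setminus\Lambda_1$ by the density argument of Step 1; Step 2 then produces the associated functions $\gamma_\xi$ on a (possibly smaller) common neighborhood $\mathcal N$.

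\textbf{Main obstacle.} The only subtlety is ensuring that the rational perturbation in Step 1 does not destroy the positive-spanning property and that the resulting neighborhood $\mathcal N$ covers the whole set $\{|R-\Id|\le\tfrac12\}$. Both are handled by (i) density of $\mathbb Q^3\cap\mathbb S^2$ in $\mathbb S^2$ combined with the openness of strict positivity, and (ii) freedom to enlarge $\Lambda$. Everything else is routine finite-dimensional linear algebra plus the smoothness of a linear right inverse.
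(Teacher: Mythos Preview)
The paper does not give a proof; the lemma is quoted from \cite{DaneriSzekelyhidi17}, where explicit sets $\Lambda_1,\Lambda_2\subset\mathbb S^2\cap\mathbb Q^3$ are exhibited. Your three-step outline is the standard argument and correctly produces, for two disjoint finite $\Lambda_\alpha\subset\mathbb S^2\cap\mathbb Q^3$, smooth $\gamma_\xi$ satisfying the identity on \emph{some} neighborhood of $\Id$.

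The one soft spot is upgrading ``some neighborhood'' to the specific ball $\{|R-\Id|\le\tfrac12\}$. Your proposed fix ``enlarge $\Lambda$ and repeat'' is not self-justifying: taking traces in $\Id=\sum_{\xi\in\Lambda}c_\xi\,\xi\otimes\xi$ forces $\sum_\xi c_\xi=3$, hence $\min_\xi c_\xi\le 3/|\Lambda|$, and with a fixed linear right inverse the positivity neighborhood \emph{shrinks} as $|\Lambda|$ grows. What does work is either (i) an explicit $\Lambda$ followed by a direct radius estimate (this is what the cited reference does), or (ii) the observation that the open polyhedral cones $\{\sum_{\xi\in\Lambda}c_\xi\,\xi\otimes\xi:c_\xi>0\}$ exhaust the positive-definite cone as $\Lambda$ ranges over finite subsets of the dense set $\mathbb S^2\cap\mathbb Q^3$, so by compactness some finite $\Lambda$ covers the ball; a smooth positive section $R\mapsto c(R)$ then exists because the fibers $\{c>0:L(c)=R\}$ are nonempty and convex (e.g.\ take the analytic center). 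For the purposes of this paper the precise radius is in any case immaterial: the amplitudes $a_{(\xi)}$ feed $\gamma_\xi$ the rescaled argument $\Id-\RRO/\rho_i$, and the numerical constant in $\rho_i$ can absorb any change of radius.
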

For each $\xi\in \Lambda_\alpha$, let use define $A_{\xi}\in\mathbb S^2\cap \mathbb Q^3$ to be an orthogonal vector to $\xi$. Then for each $\xi\in \Lambda_\alpha$, we have that $\{\xi,A_{\xi},\xi\times A_{\xi}\}\subset \mathbb S^2\cap \mathbb Q^3$ form an orthonormal basis for $\mathbb R^3$. Furthermore, since the index sets $\{\Lambda_\alpha \}_{\alpha=1,2}$ are finite, there exists a universal natural number $N_{\Lambda}$ such that
\begin{equation}\label{e:Mignolet}
\left\{N_{\Lambda}\xi,~N_{\Lambda}A_{\xi},~N_{\Lambda}\xi\times A_{\xi}\right\}\subset N_{\Lambda}\mathbb S^2\cap \mathbb N^3\,
\end{equation}
for every $\xi \in \Lambda_{\alpha}$.

Let $\Phi:\mathbb R^2\rightarrow \mathbb R^2$ be a smooth function with support contained in a ball of radius $1$. Moreover, suppose $\Phi$ is normalized such that if $\phi=-\Delta \Phi$ then
\begin{equation}\label{e:phi_normalize}
 \frac{1}{4\pi^2}\int \phi^2(x,y)\,dxdy=1\,.
\end{equation}
We remark that by definition $\phi$ has mean zero. Define $\psi:\mathbb R\rightarrow \mathbb R$ to be a smooth, mean zero function with support in the ball of radius $1$ satisfying
\begin{equation}\label{e:psi_normalize}
\frac{1}{2\pi}\int \psi^2(z)\,dz=1\,.
\end{equation}
Let $\phi_{\ell_{\perp}}$, $\Phi_{\ell_{\perp}}$ and $\psi_{\ell_{\|}}$ be the rescalings
\begin{align*}
\phi_{\ell_{\perp}}(x,y):=\frac{\phi\left(\frac{x}{\ell_{\perp}},\frac{y}{\ell_{\perp}}\right)}{\ell_{\perp}},\quad \Phi_{\ell_{\perp}}(x,y):=\frac{\Phi\left(\frac{x}{\ell_{\perp}},\frac{y}{\ell_{\perp}}\right)}{\ell_{\perp}}\quad\mbox{and}\quad 
\psi_{\ell_{\|}}(z):=\frac{\psi\left(\frac{z}{\ell_{\|}}\right)}{\ell_{\|}^{\sfrac12}}
\end{align*}
so that $\phi_{\ell_{\perp}}=-\ell_{\perp}^2\Delta \Phi_{\ell_{\perp}}$,
where we will assume $\ell_{\perp},\ell_{\|}>0$  to be such that
\[\ell_{\perp}\ll \ell_{\|}\ll 1\,.\]
By an abuse of notation, let us periodize $\Phi_{\ell_{\perp}}$ and $\psi_{\ell_{\|}}$ so that the functions are treated as functions defined on $\mathbb T^2$ and $\mathbb T$ respectively. For a large \emph{real number} $\lambda$ such that $\lambda\ell_{\perp}\in \mathbb N$, we define $V_{\xi,\ell_{\perp},\ell_{\|},\lambda, \mu}:\mathbb T^3\times \mathbb R\rightarrow \mathbb R$ by
\begin{align*}
V_{(\xi)}
&:=V_{\xi,\ell_{\perp},\ell_{\|},\lambda,\mu}(x,t)  \\
&:= \frac{1}{\lambda^2 N_{\Lambda}^2}\psi_{\ell_{\|}}(N_{\Lambda}\ell_{\perp}\lambda (x\cdot \xi+\mu t))\Phi_{\ell_{\perp}}(N_{\Lambda}\ell_{\perp}\lambda (x-\alpha_{\xi})\cdot A_\xi,N_{\Lambda}\ell_{\perp}\lambda (x-\alpha_{\xi})\cdot (\xi\times A_{\xi}))\xi\,. 
\end{align*}
where here $\alpha_{\xi}\in\mathbb R^3$ are shifts that ensure that the set of functions $\{V_{\xi,\ell_{\perp},\ell_{\|},\lambda,\mu}\}_{\xi}$ have mutually disjoint support. In order for such shifts $\alpha_{\xi}$ to exist, we require that $\ell_{\perp}$ to be sufficiently small, depending on the finite sets $\Lambda_{\alpha}$. 

Our {\em intermittent jet} is then defined to be 
\begin{align}
W_{(\xi)}
&:=W_{\xi,\ell_{\perp},\ell_{\|},\lambda,\mu}(x,t) \notag\\
&:= \psi_{\ell_{\|}}(N_{\Lambda}\ell_{\perp}\lambda (x\cdot \xi+\mu t))\phi_{\ell_{\perp}}(N_{\Lambda}\ell_{\perp}\lambda (x-\alpha_{\xi})\cdot A_\xi,N_{\Lambda}\ell_{\perp}\lambda (x-\alpha_\xi)\cdot (\xi\times A_{\xi}))\xi\,.
\label{eq:jet}
\end{align}
From the definition, using \eqref{e:Mignolet} and $\ell_\perp \lambda \in \N$, we have that $W_{(\xi)}$ has zero mean, and $W_{(\xi)}$ is $\left(\sfrac{\T}{\ell_{\perp} \lambda}\right)^3$-periodic. Moreover, by our choice of $\alpha_\xi$, we have that the $W_{(\xi)}$ have mutually disjoint support, i.e.\
\begin{align}
W_{(\xi)} \otimes W_{(\xi')} \equiv 0 \qquad \mbox{whenever} \qquad \xi \neq \xi' \in \cup_{\alpha \in \{1,2\}} \Lambda_\alpha\,.
\label{eq:Messi}
\end{align}
Note that the intermittent jets $W_{(\xi)}$ are not divergence free, however assuming $\ell_{\perp}\ll \ell_{\|}$ then they be corrected by a small term, such that the sum with the corrector is divergence free. To see this, let us adopt the shorthand notation
\begin{align*}
\psi_{(\xi)}&:=\psi_{\xi,\ell_{\perp},\ell_{\|},\lambda,\mu}:=\psi_{\ell_{\|}}(N_{\Lambda}\ell_{\perp}\lambda (x\cdot \xi+\mu t)),\\
\Phi_{(\xi)}&:=\Phi_{\xi,\ell_{\perp},\lambda,\mu}:=\Phi_{\ell_{\perp}}(N_{\Lambda}\ell_{\perp}\lambda (x-\alpha_{\xi})\cdot A_\xi,N_{\Lambda}\ell_{\perp}\lambda (x-\alpha_\xi) \cdot (\xi\times A_{\xi}))\\
\phi_{(\xi)}&:=\phi_{\xi,\ell_{\perp},\lambda,\mu}:=\phi_{\ell_{\perp}}(N_{\Lambda}\ell_{\perp}\lambda (x-\alpha_{\xi})\cdot A_\xi,N_{\Lambda}\ell_{\perp}\lambda (x-\alpha_\xi)\cdot (\xi\times A_{\xi}))\,.
\end{align*}
and compute
\begin{align}\label{e:div_correct_id}
\curl \curl V_{(\xi)}= W_{(\xi)}+\underbrace{\frac{1}{\lambda^2 N_{\Lambda}^2}\curl\left(\Phi_{(\xi)}\curl\left(\psi_{(\xi)}\xi\right)\right)}_{\equiv 0}+\underbrace{\frac{1}{\lambda^2 N_{\Lambda}^2}
\nabla\psi_{(\xi)}\times \curl\left(\Phi_{(\xi)}\xi\right)}_{W^{(c)}_{(\xi)}}\,.
\end{align}
Thus
\[\div\left(W_{(\xi)}+W^{(c)}_{(\xi)}\right)\equiv 0\,.\]
Moreover, so long as $\ell_{\perp}\ll \ell_{\|}$ then $W^{(c)}_{(\xi)}$ is comparatively small compared to $W_{(\xi)}$. Observe that as a consequence of the normalizations \eqref{e:phi_normalize} and \eqref{e:psi_normalize} we have
\begin{align*}
\fint_{\T^3} W_{(\xi)}(x) \otimes W_{(\xi)}(x)\,dx=\xi\otimes \xi\,. 
\end{align*}
We also note that by definition $W_{(\xi)}$ is mean zero. As a consequence, using Lemma \ref{l:linear_algebra} we have
\begin{equation}\label{e:Reynolds_cancellation}
 \sum_{\xi\in \Lambda_\alpha}\gamma^2_{\xi}(R)\fint_{\T^3}  W_{(\xi)}(x)\otimes   W_{(\xi)}(x)\,dx=R\,,
\end{equation}
for every symmetric matrix $R$ satisfying $\abs{R-\Id}\leq \sfrac12$.
By scaling and Fubini, we have the estimates
\begin{align}
\norm{\nabla^N\partial_t^M\psi_{(\xi)}}_{L^{p}}&\les \ell_{\|}^{\sfrac{1}{p}-\sfrac12}\left(\frac{\ell_{\perp}\lambda}{\ell_{\|}}\right)^N\left(\frac{\ell_{\perp}\lambda\mu}{\ell_{\|}}\right)^M\label{e:phi_Lp_bnd}\\
\norm{\nabla^N\phi_{(\xi)}}_{L^{p}}+\norm{\nabla^N\Phi_{(\xi)}}_{L^{p}}&\les \ell_{\perp}^{\sfrac{2}{p}-1}\lambda^N\label{e:psi_Lp_bnd}\\
\norm{\nabla^N\partial_t^M W_{(\xi)}}_{L^{p}}+\lambda^2\norm{\nabla^N\partial_t^M V_{(\xi)}}_{L^{p}}&\les \ell_{\perp}^{\sfrac{2}{p}-1}\ell_{\|}^{\sfrac{1}{p}-\sfrac12}\lambda^N\left(\frac{\ell_{\perp}\lambda\mu}{\ell_{\|}}\right)^M\,,\label{e:W_Lp_bnd}
\end{align}
where again here we have assumed 
\[\ell_{\|}^{-1}\ll \ell_{\perp}^{-1}\ll \lambda\,.\]

Finally, we note the essential identity
\begin{align}\label{eq:useful:2}
\div\left(W_{(\xi)}\otimes W_{(\xi)}\right) =2(W_{(\xi)}\cdot \nabla \psi_{(\xi)})\phi_{(\xi)}\xi=\frac{1}{\mu}\phi^2_{(\xi)}\partial_t \psi^2_{(\xi)}\xi\,,
\end{align}
which follows from the fact that by construction we have that $W_{(\xi)}$ is a scalar multiple of $\xi$, 
\[
(\xi \cdot \nabla) \psi_{(\xi)} = \frac{1}{\mu} \partial_t \psi_{(\xi)}\,,
\]
and $\phi_{(\xi)}$ is time-independent. 

\subsection{The perturbation}\label{sec:perturbation}
In this section we will construct the perturbation $w_{q+1}$.

\subsubsection{Stress cutoffs}
Because the Reynolds stress $\RRO$ is not spatially homogenous, we  introduce stress cutoff functions.
We let $0 \leq \tilde \chi_0, \tilde \chi \leq 1$  be bump functions adapted to the intervals $[0,4]$ and $[1/4,4]$, such that together they form a partition of unity:
\begin{align}
\tilde \chi_{0}^2(y) + \sum_{i\geq 1}  \tilde \chi^2_{i}(y) \equiv 1, \quad \mbox{where} \quad \tilde \chi_i (y) = \tilde \chi(4^{-i} y),
\label{eq:tilde:partition}
\end{align}
for any $y>0$.
We then define 
\begin{align}
\chi_{(i)}(x,t)=\chi_{i,q+1}(x,t) = \tilde \chi_{i}\left( \left\langle \frac{\RRO(x,t)}{\lambda_q^{-\sfrac{\eps_R}{4}}\delta_{q+1}}\right\rangle\right) 
\label{eq:chi:i:def}
\end{align}
for all $i\geq 0$. 
Here and throughout the paper we use the notation $\langle A \rangle = (1+ |A|^2)^{1/2}$ where $|A|$ denotes the Euclidean norm of the matrix $A$.
By definition the cutoffs $\chi_{(i)}$ form a partition of unity
\begin{align}
\sum_{i\geq 0} \chi_{(i)}^2 \equiv 1
\label{eq:chi:partition} 
\end{align}
and we will show in Lemma~\ref{lem:max:i} below that there exists an index $i_{\rm max} = i_{\rm max}(q)$, such that $\chi_{(i)} \equiv 0$ for all $i > i_{\rm max}$, and moreover that $4^{i_{\rm max}} \les \tau_{q+1}^{-1}$.

\subsubsection{The definition of the velocity increment}

Recall that from~Lemma~\ref{l:linear_algebra}, the functions $\gamma_{(\xi)}$ are well-defined and smooth in the $\sfrac 12$ neighborhood of the identity matrix. In view of \eqref{eq:chi:i:def}, this motivates introducing the parameters $\rho_i$  by 
\begin{align}\label{e:rho_i_def}
\rho_i:= \lambda_q^{-\sfrac{\eps_R}{4}} \delta_{q+1} 4^{i+2}, \qquad \mbox{for all} \qquad i \geq 0 \, ,
\end{align}
which have the property that 
\begin{align}
\frac{|\RRO|}{\rho_i} \leq \frac 14
\quad \mbox{on the support of} \quad  \chi_{(i)}\, , \quad \mbox{for all} \quad i \geq 0\,.\notag
\end{align}
As such, for $i\geq 0$ we define the coefficient function $a_{\xi,i,q+1}$ by
\begin{equation}
a_{(\xi)}:= a_{\xi,i,q+1}(x,t):=\theta(t) \;\rho_i^{\sfrac 12} \; \chi_{i,q+1}(x,t) \; \gamma_{(\xi)}\left(\Id - \frac{\RRO(x,t)}{ \rho_i}\right) \, ,
\label{eq:a:oxi:def}
\end{equation}
where $\theta \colon [0,T] \to [0,1]$ is a  smooth temporal cut-off function with the following properties: 
\begin{enumerate}[label=(\roman*)]
\item $\theta(t)=1$ for all $t$ such that $ {\rm dist}(t,\GG^{(q+1)}) \geq 2 \tau_{q+1}$.
\item $\theta(t)=0$ for all $t$ such that $ {\rm dist}(t,\GG^{(q+1)}) \leq  \tau_{q+1}$.
\item $\norm{\theta}_{C^M}\les \tau_{q+1}^{-M}$, where the implicit constant depends only on $M$.
\end{enumerate}
To see that a choice for $\theta$ with property (iii) holding is possible, recall from \eqref{eq:B:q+1:def} that the bad set $\BB^{(q+1)}$ consists of a finite disjoint union of intervals of length $5 \tau_{q+1}$. 
From the first property above and  \eqref{eq:bar:Rq:supp}, we conclude that 
\begin{align}
t\in \supp(\mathring{\overline{R}}_q)   \quad\mbox{implies} \quad\theta(t)=1\,. 
\label{eq:R_support}
\end{align}
From the second property above we further obtain that
\begin{align}\label{e:a_support}
t\in \supp(\theta)\supset \supp(a_{(\xi)})   \quad\mbox{implies} \quad {\rm dist}(t, \GG^{(q+1)}) >  \tau_{q+1} \, .
\end{align}

We note that as a consequence of \eqref{e:Reynolds_cancellation},  \eqref{eq:chi:partition}, \eqref{eq:a:oxi:def}, and \eqref{eq:R_support} we have 
\begin{align}
\sum_{i\geq 0}\sum_{\xi\in \Lambda_{(i)}}a_{(\xi)}^2 \fint_{\mathbb T^3} \mathbb W_{(\xi)}\otimes \mathbb W_{(\xi)}  dx=
\theta^2  \sum_{i\geq 0} \rho_i \chi_{(i)}^2\Id - \RRO\,,
\label{e:WW_id}
\end{align}
which justifies the definition of the amplitude functions $a_{(\xi)}$. Note that $\theta = 1$ on the support of $\chi_{(i)}$ for any $i \geq 1$. 

By a slight abuse of notation, let us now fix $\lambda, \sigma, \ell_{\perp},\ell_{\|}$, and $\mu$ for the short hand notation $W_{(\xi)}$, $V_{(\xi)}$, $\Phi_{(\xi)}$, $\phi_{(\xi)}$ and $\psi_{(\xi)}$ introduced in Section~\ref{s:intermittent}:
\begin{gather*}
W_{(\xi)}:=W_{\xi,\ell_{\perp},\ell_{\|},\lambda_{q+1},\mu}, \quad V_{(\xi)}:=V_{\xi,\ell_{\perp},\ell_{\|},\lambda_{q+1},\mu},\\
\psi_{(\xi)}:=\psi_{\xi,\ell_{\perp},\ell_{\|},\lambda_{q+1},\mu},\quad
\Phi_{(\xi)}:=\Phi_{\xi,\ell_{\perp},\lambda_{q+1},\mu},\quad\mbox{and}\quad
\phi_{(\xi)}:=\phi_{\xi,\ell_{\perp},\lambda_{q+1},\mu}
\end{gather*}
where  $\ell_{\perp},\ell_{\|}$, and $\mu$ are defined in \eqref{e:param_def}.  Importantly, we have from \eqref{e:integer_constraint} that $\lambda_{q+1}\ell_{\perp}\in\mathbb N$ which ensures the periodicity of $W_{(\xi)}$, $V_{(\xi)}$, $\Phi_{(\xi)}$, $\phi_{(\xi)}$ and $\psi_{(\xi)}$. Observe that as a consequence of our parameter choices we have the useful inequality 
\begin{align}
\label{eq:useful}
\mu^{-1} \ell_\perp^{-1} \ell_{\|}^{-\sfrac 12}  =  \lambda_{q+1}^{-\frac{5-4\alpha}{8}}  \ll 1 \, ,
\end{align}
for all $\alpha < \sfrac 54$.  

The {\em principal part of $w_{q+1}$} is defined as
\begin{equation}
w^{(p)}_{q+1}:=\sum_{i}\sum_{\xi\in \Lambda_{(i)}}a_{(\xi)} \;  W_{(\xi)}\,,
\label{eq:w:q+1:p:def}
\end{equation}
where the sum is over $0 \leq i \leq i_{\rm max}(q)$. Here we write $\Lambda_{(i)}= \Lambda_{i {\rm \, mod\, } 2}$. Note that $\abs{i-j}\geq 2$ implies $\chi_i \chi_{j} \equiv 0$ and $\xi\neq \xi'$ implies $W_{(\xi)}\otimes W_{(\xi')}\equiv 0$. This implies that the summands in \eqref{eq:w:q+1:p:def} have mutually disjoint supports. In order to fix the fact that $w_{q+1}^{(p)}$ is not divergence free, we define an {\em incompressibility corrector} by
\begin{align}
w^{(c)}_{q+1}:=&\sum_{i}\sum_{\xi\in \Lambda_{(i)}} \curl\left(\nabla a_{(\xi)}\times V_{(\xi)}
\right)+\frac{1}{\lambda_{q+1}^2 N_{\Lambda}^2}\nabla\left(a_{(\xi)}\psi_{(\xi)}\right)\times \curl\left(\Phi_{(\xi)}\xi\right)\,,
\label{eq:w:q+1:c:def}
\end{align}
so that by a similar formula to \eqref{e:div_correct_id}
\begin{align}\label{e:curl_form}
w_{q+1}^{(p)}+w_{q+1}^{(c)}=\sum_{i}\sum_{\xi\in \Lambda_{(i)}} \curl\curl(a_{(\xi)}V_{(\xi)})\,,
\end{align}
and thus
\begin{align*}
\div\left(w_{q+1}^{(p)}+w_{q+1}^{(c)}\right)\equiv 0\,.
\end{align*}

In addition to the incompressibility corrector $w_{q+1}^{(c)}$, we introduce a  {\em temporal corrector} $w_{q+1}^{(t)}$, which is defined by
\begin{equation}\label{e:temporal_corrector}
w^{(t)}_{q+1}:=-\frac{1}{ \mu}\sum_{i}\sum_{\xi\in \Lambda_{(i)}}\mathbb P_{H}\mathbb P_{\neq 0}\left(a_{(\xi)}^2\phi^2_{(\xi)} \psi^2_{(\xi)}\xi \right)\, .
\end{equation}

Finally, we define the velocity increment $w_{q+1}$ by
\begin{equation}\label{eq:w:q+1:def}
w_{q+1}:=w_{q+1}^{(p)}+w_{q+1}^{(c)}+w_{q+1}^{(t)}\,,
\end{equation}
which is by construction mean zero and divergence-free. 
The new velocity field $v_{q+1}$ is then defined as
\begin{equation}\label{eq:v:q+1:def}
v_{q+1} = \vo + w_{q+1} \, .
\end{equation}

Observe that as a consequence of \eqref{e:a_support} we have 
\begin{equation}\label{e:w_support}
t\in \supp(w_{q+1}) \quad\mbox{implies} \quad {\rm dist}(t, \GG^{(q+1)}) >  \tau_{q+1}\,.
\end{equation}
Hence $v_{q+1}=\vo$ on $\GG^{(q+1)}$ which we recall was required in Section \ref{ss:proof_prop} to deduce property \ref{eq:cond:v} of Section \ref{sec:inductive} for $\GG^{(q+1)}$. Moreover, property \ref{eq:cond:vi} also follows as a consequence of \eqref{e:w_support} and \eqref{eq:bar:Rq:supp}.

\subsubsection{Estimates of the perturbation}

This section closely mirrors~\cite[Section 4.4]{BV}, and thus we omit most details where the estimates/proofs are mutatis-mutandis those from~\cite{BV}. There is an analogy between the mollification parameter $\ell$ in \cite{BV} and the time-scale $\tau_{q+1}$ in this paper, in view of parabolic smoothing.

First, similarly to~\cite[Lemma 4.1 and 4.2]{BV} we state a useful lemma concerning the cutoffs  $\chi_{(i)}$ defined in \eqref{eq:chi:i:def}, summarizing their size and regularity: 
\begin{lemma}
\label{lem:max:i}
For $q\geq 0$, there exists $i_{\rm max}(q) \geq 0$ such that
\begin{align}
\chi_{(i)} \equiv 0 \quad \mbox{for all} \quad i > i_{\rm max} \, .
\label{eq:chi:i:vanish}
\end{align}
Moreover, for all $0 \leq i\leq i_{\rm max}$ we have that  
\begin{align}
\rho_i\leq  \lambda_1^{\beta}  4^{i_{\rm max}} \leq \tau_{q+1}^{-2} \, ,
\label{eq:i:max:bound}
\end{align}
and the bound
\begin{equation}
\sum_{i = 0}^{i_{\rm max}} \rho_i^{\sfrac12}2^{-i}\leq  \lambda_{q}^{-\sfrac{\eps_R}{16}} \delta_{q+1}^{\sfrac 12}   \,  \label{e:rho_sum}
\end{equation}
holds.
Additionally, for $0 \leq i \leq i_{\rm max}$ we have
\begin{align} 
\norm{ \chi_{(i)} }_{L^2} &\les  2^{-i} \, ,\label{eq:psi:i:L2}  \\
 \norm{ \chi_{(i)}}_{C^{N}_{x,t}} & \les  \tau_{q+1}^{-3N}  \, ,\label{eq:psi:i:CN}
\end{align}
for all $N\geq 1$, where the implicit constant only depends on $N$.
\end{lemma}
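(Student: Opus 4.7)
The plan is to derive each of the five claims directly from the pointwise, $L^1$, and $H^3$ bounds on $\RRO$ in Proposition~\ref{prop:bar:vq:Rq}, combined with the explicit formulas for $\chi_{(i)}$ and $\rho_i$. The Sobolev embedding $H^3(\T^3)\hookrightarrow L^\infty$ applied to \eqref{eq:Rq:2} with $M=N=0$ yields $\|\RRO(\cdot,t)\|_{L^\infty}\lesssim \tau_{q+1}^{-1}\lambda_q^4$. Since $\tilde\chi_i$ is supported in the interval $\{y: 4^{i-1}\leq y\leq 4^{i+1}\}$ for $i\geq 1$, the cutoff $\chi_{(i)}$ vanishes identically as soon as $4^{i-1}\lambda_q^{-\eps_R/4}\delta_{q+1}$ exceeds this $L^\infty$ bound; I take $i_{\max}$ to be the least integer with this property. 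A direct power count using $\tau_{q+1}=\lambda_q^{-7-\eps_R/4}\delta_{q+1}^{1/2}$, $\delta_{q+1}\leq\delta_1=\lambda_1^{\beta}$, and $\rho_i=\lambda_q^{-\eps_R/4}\delta_{q+1}4^{i+2}$ then yields both \eqref{eq:chi:i:vanish} and \eqref{eq:i:max:bound}, with spare powers of $\lambda_q$ to absorb implicit constants.

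Estimate \eqref{e:rho_sum} is immediate from the explicit formula: the summand $\rho_i^{1/2}2^{-i}=4\lambda_q^{-\eps_R/8}\delta_{q+1}^{1/2}$ does not depend on $i$, so the full sum equals $4(i_{\max}+1)\lambda_q^{-\eps_R/8}\delta_{q+1}^{1/2}$. Since $i_{\max}$ is polylogarithmic in $\lambda_q$, it is absorbed into the spare factor $\lambda_q^{\eps_R/16}$ once $a$ is large. For the $L^2$ estimate \eqref{eq:psi:i:L2}, I use that on $\supp\chi_{(i)}$ with $i\geq 1$ one has $|\RRO(x,t)|\gtrsim 4^i\lambda_q^{-\eps_R/4}\delta_{q+1}$, so Chebyshev applied to \eqref{eq:Rq:1} gives $|\{x\in\T^3:\chi_{(i)}(x,t)\neq 0\}|\lesssim 4^{-i}$ uniformly in $t$, and $0\leq\chi_{(i)}\leq 1$ finishes the claim; the case $i=0$ is trivial.

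The main technical step is the $C^N_{x,t}$ bound \eqref{eq:psi:i:CN}. Writing $\chi_{(i)}(x,t)=\tilde\chi\bigl(4^{-i}\langle\RRO/\rho_q\rangle\bigr)$ with $\rho_q:=\lambda_q^{-\eps_R/4}\delta_{q+1}$, I plan to apply Fa\`a di Bruno's formula to this composition. Since $|\langle y\rangle^{(k)}|\lesssim\langle y\rangle^{1-k}$, each application of a space or time derivative to the composite function is pointwise bounded by a sum of products of the form $\prod_j\rho_q^{-1}|\partial_t^{M_j}D^{N_j}\RRO|$ with $\sum_j(M_j+N_j)=N$, times the $L^\infty$-bounded derivatives of $\tilde\chi$ and $\langle\cdot\rangle$. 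Using \eqref{eq:Rq:2} and $H^3\hookrightarrow L^\infty$ we have $|\partial_t^M D^N\RRO|\lesssim\tau_{q+1}^{-M-N-1}\lambda_q^4$. The decisive algebraic inequality, obtained by direct substitution of the definitions of $\tau_{q+1}$ and $\delta_{q+1}$, is
\[
\frac{\lambda_q^4}{\rho_q}=\lambda_q^{4+\eps_R/4}\delta_{q+1}^{-1}\lesssim \lambda_q^{-3}\tau_{q+1}^{-1},
\]
so each derivative costs at most $\tau_{q+1}^{-1}\cdot\lambda_q^4/\rho_q\lesssim\tau_{q+1}^{-3}$, with a spare $\lambda_q^{-3}$ to absorb constants. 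Hence each factor satisfies $\rho_q^{-1}|\partial_t^{M_j}D^{N_j}\RRO|\lesssim\tau_{q+1}^{-M_j-N_j-2}$, and the full product is bounded by $\tau_{q+1}^{-N-2k}\leq\tau_{q+1}^{-3N}$ (since $k\leq N$).

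The main obstacle is organizing the Fa\`a di Bruno bookkeeping so that the $4^{-ik}$ gains from derivatives of $\tilde\chi_i$ and the $\langle\cdot\rangle^{1-k}\lesssim 4^{i(1-k)}$ losses on $\supp\chi_{(i)}$ precisely compensate, producing an $i$-uniform constant in \eqref{eq:psi:i:CN}. This cancellation uses that $\langle\RRO/\rho_q\rangle\sim 4^i$ on $\supp\chi_{(i)}$, and is the only point in the proof where the precise shape of the cutoffs $\tilde\chi_i$ enters nontrivially; the rest is routine.
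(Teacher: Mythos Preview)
Your approach is essentially the same as the paper's, which also proves the $C^N_{x,t}$ bound via a Fa\`a di Bruno composition estimate (the paper cites \cite[Proposition~C.1]{BDLISZ15}) and handles the remaining claims exactly as you do. The only substantive difference is that the paper uses the sharper Gagliardo--Nirenberg interpolation $\|\RRO\|_{L^\infty}\lesssim\|\RRO\|_{L^1}^{1/3}\|\RRO\|_{H^3}^{2/3}\leq\lambda_q^8$ to bound $i_{\max}$, whereas your direct Sobolev embedding gives the cruder $\|\RRO\|_{L^\infty}\lesssim\tau_{q+1}^{-1}\lambda_q^4$; both suffice for \eqref{eq:i:max:bound}.

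There is, however, an arithmetic slip in your ``decisive algebraic inequality.'' You claim $\lambda_q^4/\rho_q\lesssim\lambda_q^{-3}\tau_{q+1}^{-1}$, but direct substitution gives
\[
\frac{\lambda_q^4/\rho_q}{\lambda_q^{-3}\tau_{q+1}^{-1}}
=\frac{\lambda_q^{4+\eps_R/4}\,\delta_{q+1}^{-1}}{\lambda_q^{4+\eps_R/4}\,\delta_{q+1}^{-1/2}}
=\delta_{q+1}^{-1/2}>1
\]
for $q\geq 1$, so the displayed inequality is false. What you actually need, and what does hold, is the weaker bound $\lambda_q^4/\rho_q\leq\tau_{q+1}^{-1}$, which is equivalent to $\delta_{q+1}^{-1/2}\leq\lambda_q^3$; this follows from $\delta_{q+1}\geq\lambda_{q+1}^{-2\beta}=\lambda_q^{-2\beta b}$ and $\beta b\ll 1$. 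With this correction your chain $\rho_q^{-1}|\partial_t^{M_j}D^{N_j}\RRO|\lesssim\tau_{q+1}^{-M_j-N_j-2}$ is valid and the rest of the argument goes through; this is exactly the inequality the paper records in the last line of its proof.

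One further simplification: the ``cancellation'' you describe between the $4^{-ik}$ gains from $\tilde\chi_i^{(k)}$ and the $\langle\cdot\rangle^{1-k}$ factors is not actually needed. Both $\tilde\chi_i$ and $\langle\cdot\rangle$ have derivatives uniformly bounded in $L^\infty$ (for $\tilde\chi_i$ simply because $4^{-ik}\leq 1$; for $\langle\cdot\rangle=\sqrt{1+|\cdot|^2}$ because all its derivatives are globally bounded), so the Fa\`a di Bruno bound is automatically $i$-uniform without any delicate balancing on $\supp\chi_{(i)}$.
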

\begin{proof}[Proof of Lemma~\ref{lem:max:i}]
The existence of $i_{\rm max}$ follows as a consequence of the bound
\begin{align}
\norm{\RRO}_{L^\infty}   \leq \lambda_q^{8} \, .
\label{eq:Rq:Linfinity}
\end{align}
The bound \eqref{eq:Rq:Linfinity} follows from \eqref{eq:Rq:1}--\eqref{eq:Rq:2} and the Gagliardo-Nirenberg inequality $\norm{f}_{L^\infty} \les   \norm{f}_{L^1}^{\sfrac 13} \norm{f}_{\dot{H}^3}^{\sfrac 23}$, which holds for any zero-mean periodic function $f \in H^3$, and  the definition of $\tau_{q+1}$ in \eqref{eq:tau:q:def}, and the fact that  $\eps_R \leq 1$. Indeed, we have
\begin{align*}
\norm{\RRO}_{L^\infty}   \les  \left(\lambda_q^{-\sfrac{\eps_R}{4}} \delta_{q+1}\right)^{\sfrac 13} \left( \tau_{q+1}^{-1} \lambda_q^{4}\right)^{\sfrac 23} = \left(\lambda_q^{-\sfrac{\eps_R}{4}} \delta_{q+1}\right)^{\sfrac 13} \left( \lambda_q^{7+ \sfrac{\eps_R}{4}} \delta_{q+1}^{-\sfrac 12} \lambda_q^{4}\right)^{\sfrac 23} 
 =   \lambda_q^{\sfrac{\eps_R}{12} + \sfrac{22}{3} }
\end{align*}
and the remaining power  of $\lambda_q^{-\sfrac 23}$ may be used to absorb $\lambda_q^{\sfrac{\eps_R}{12}} $ and the implicit universal constant.

The first bound expressed in \eqref{eq:i:max:bound} follows from the definition of $\rho_i$ in \eqref{e:rho_i_def}, the fact that by \eqref{eq:delta:q:def} we have $\delta_{q+1} \leq \lambda_1^\beta$, and the fact that $a$ may be chosen sufficiently large to ensure that $4^5 \lambda_q^{-\sfrac{\eps_R}{4}} \leq 1$.
 Next, we note that in view of the definition of $\chi_{(i)}$, for any $i \geq 1$,  if $(x,t)$ is such that 
\begin{align*}
\left\langle   \lambda_q^{\sfrac{\eps_R}{4}} \delta_{q+1}^{-1}  \RRO(x,t) \right \rangle < 4^{i-1}
\end{align*}
then $\chi_{(i)}(x,t) = 0$. Therefore, by the bound \eqref{eq:Rq:Linfinity} and the fact that $\beta b \leq \sfrac 14$, if $i\geq 1$ is such that 
\begin{align*}
\left\langle  \lambda_q^{\sfrac{\eps_R}{4}} \delta_{q+1}^{-1} \lambda_q^{8}  \right \rangle \leq  \lambda_q^{9} < 4^{i-1}
\end{align*}
then $\chi_{(i)} \equiv 0$.  Therefore, in view of the parameter inequality
\[
 \lambda_q^{9} \leq 4^{-2} \lambda_1^{-\beta} \tau_{q+1}^{-2} \, ,
\]
which holds in view of \eqref{eq:tau:q:def} and the fact that $\beta b \leq \sfrac 14$, upon taking $a$ to be sufficiently large, we may thus define 
\begin{align*}
i_{\rm max}(q) = \max \left\{i\geq 0 \colon  \lambda_1^\beta \; 4^{i}  \leq   \tau_{q+1}^{-2}\right\}. 
\end{align*}
With this choice of $i_{\rm max}$ it from the above argument it follows that \eqref{eq:chi:i:vanish} holds.
The bound on $i_{\rm max}$ claimed in the second inequality in \eqref{eq:i:max:bound} then follows from the above definition.

The bound \eqref{e:rho_sum} follows from the second estimate in \eqref{eq:i:max:bound} which gives an upper bound on $i_{\rm max}$, the definition \eqref{e:rho_i_def},   and using  that $\lambda_q^{-\sfrac{\eps_R}{16}} \log_4 (\tau_{q+1}^{-2}) \leq 8\lambda_q^{-\sfrac{\eps_R}{16}} \log_4 (\lambda_q)$ can be made arbitrarily small if $a$ is chosen sufficiently large, depending on $\eps_R$. 

For $i= 0,1$, the bound \eqref{eq:psi:i:L2} follows from the fact that $\tilde \chi_0, \tilde \chi \leq 1$.  For $i \geq 2$, we appeal to the definition of $\chi_{(i)}$, Chebyshev's inequality, and the $L^1$ estimate on $\RRO$ in \eqref{eq:Rq:1}, to obtain that $\norm{\chi_{(i)}}_{L^1} \les 4^{-i}$. The bound \eqref{eq:psi:i:L2} follows by interpolation.

 Estimate \eqref{eq:psi:i:CN} is a consequence of \eqref{eq:Rq:2}   and~\cite[Proposition C.1]{BDLISZ15}, applied to  the composition with the smooth functions $\gamma_\xi(\cdot)$ and $\langle \cdot \rangle = \sqrt{1+ (\cdot)^2}$.  Indeed, for any $i \geq 0$ we obtain
\begin{align*}
\norm{ \chi_{(i)}}_{C^N_{x,t}} 
&\les \norm{\langle  \lambda_q^{\sfrac{\eps_R}{4}} \delta_{q+1}^{-1} \RRO\rangle}_{C^N_{x,t}} + \norm{\langle  \lambda_q^{\sfrac{\eps_R}{4}} \delta_{q+1}^{-1} \RRO\rangle}_{C^1_{x,t}}^N\\
&\les 1 + \lambda_q^{\sfrac{\eps_R}{4}} \delta_{q+1}^{-1} \norm{  \RRO }_{C^N_{x,t}} + \left( \lambda_q^{\sfrac{\eps_R}{4}} \delta_{q+1}^{-1} \norm{  \RRO }_{C^1_{x,t}}\right)^N \\
&\les 1 + \lambda_q^{\sfrac{\eps_R}{4}} \delta_{q+1}^{-1} \tau_{q+1}^{-N-1} \lambda_q^{4} + \left( \lambda_q^{\sfrac{\eps_R}{4}} \delta_{q+1}^{-1}  \tau_{q+1}^{-2} \lambda_q^{4} \right)^N \\
&\les 1+ \tau_{q+1}^{-N-2} + \tau_{q+1}^{-3N} \les \tau_{q+1}^{-3N}.
\end{align*}
Here we have used \eqref{eq:tau:q:def} to show that $\tau_{q+1} \leq 1$, and that $\lambda_q^{\sfrac{\eps_R}{4}} \delta_{q+1}^{-1} \lambda_q^{4} \leq   \lambda_q^{7 + \sfrac{\eps_R}{4}} \delta_{q+1}^{-\sfrac 12} =  \tau_{q+1}^{-1}$.
\end{proof}

Next, we recall from \cite[Lemma 4.3]{BV} the following bounds on the coefficients $a_{(\xi)}$.
\begin{lemma}
\label{lem:many:bounds}
The bounds
\begin{align}
\norm{a_{(\xi)}}_{L^2} &\les \rho_i^{\sfrac 12} 2^{-i}\les \delta_{q+1}^{\sfrac 12}  \, ,\label{e:a_est_L2} \\
\norm{a_{(\xi)}}_{L^\infty} &\les \rho_i^{\sfrac 12}  \les \delta_{q+1}^{\sfrac 12} 2^i \, ,\label{e:a_est} \\
\norm{a_{(\xi)}}_{C^{N}_{x,t}} &\les  \tau_{q+1}^{-3N- 1}  \label{e:a_est_CN}
\end{align}
hold for all $0 \leq i \leq i_{\rm max}$ and $N \geq 1$. 
\end{lemma}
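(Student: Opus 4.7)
The plan is to establish the three bounds one at a time by exploiting the factorization
\[
a_{(\xi)} = \theta(t)\, \rho_i^{1/2}\, \chi_{(i)}(x,t)\, \gamma_{(\xi)}\!\bigl(\Id - \RRO/\rho_i\bigr)
\]
together with the key observation that on $\supp \chi_{(i)}$ the argument of $\gamma_{(\xi)}$ lies in the $\tfrac12$-neighborhood of $\Id$ (directly from the choice of $\rho_i$ in \eqref{e:rho_i_def}), the region where $\gamma_{(\xi)}$ is smooth and uniformly bounded by Lemma~\ref{l:linear_algebra}. Before starting, I would smoothly extend $\gamma_{(\xi)}$ to a globally defined, compactly supported function on $\mathbb R^{3\times 3}$ by multiplying by a bump in the $\tfrac12$-neighborhood of $\Id$; this extension does not affect the product $\chi_{(i)}\gamma_{(\xi)}(\Id-\RRO/\rho_i)$ and gives $\|D^k\gamma_{(\xi)}\|_{L^\infty} \lesssim_k 1$ for all $k \geq 0$.

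The $L^\infty$ bound \eqref{e:a_est} is then immediate from $\|\theta\|_\infty, \|\chi_{(i)}\|_\infty \leq 1$ and the uniform pointwise bound on $\gamma_{(\xi)}$, giving $\|a_{(\xi)}\|_{L^\infty}\lesssim \rho_i^{1/2}$; the secondary inequality follows from $\rho_i^{1/2} = 2^{i+2}(\lambda_q^{-\eps_R/4}\delta_{q+1})^{1/2}\leq 4\delta_{q+1}^{1/2}2^i$ (using $a\geq 1$). For the $L^2$ bound \eqref{e:a_est_L2}, I would multiply the pointwise estimate by $\|\chi_{(i)}\|_{L^2}\lesssim 2^{-i}$ from \eqref{eq:psi:i:L2}.

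For the $C^N_{x,t}$ bound \eqref{e:a_est_CN}, the plan is to apply the Leibniz rule:
\[
\|a_{(\xi)}\|_{C^N_{x,t}} \lesssim \rho_i^{1/2}\sum_{k_1+k_2+k_3=N} \|\theta\|_{C^{k_1}}\,\|\chi_{(i)}\|_{C^{k_2}_{x,t}}\,\|\gamma_{(\xi)}(\Id-\RRO/\rho_i)\|_{C^{k_3}_{x,t}}.
\]
The first two factors are controlled by $\tau_{q+1}^{-k_1}$ (property (iii) of $\theta$) and $\tau_{q+1}^{-3k_2}$ (Lemma~\ref{lem:max:i}). For the composition factor I would invoke the chain-rule estimate \cite[Proposition C.1]{BDLISZ15} to obtain
\[
\|\gamma_{(\xi)}(\Id-\RRO/\rho_i)\|_{C^{k_3}_{x,t}} \lesssim 1 + \rho_i^{-1}\|\RRO\|_{C^{k_3}_{x,t}} + \bigl(\rho_i^{-1}\|\RRO\|_{C^1_{x,t}}\bigr)^{k_3}.
\]
Using \eqref{eq:Rq:2} with $H^3\hookrightarrow C^0$ gives $\|\RRO\|_{C^M_{x,t}}\lesssim \tau_{q+1}^{-M-1}\lambda_q^{4}$, while the lower bound $\rho_i\geq 16\lambda_q^{-\eps_R/4}\delta_{q+1}$ combined with $\tau_{q+1}^{-1}=\lambda_q^{7+\eps_R/4}\delta_{q+1}^{-1/2}$ yields $\rho_i^{-1}\lambda_q^{4}\lesssim \tau_{q+1}^{-1}$, and hence $\rho_i^{-1}\|\RRO\|_{C^{k_3}_{x,t}}\lesssim \tau_{q+1}^{-k_3-2}\lesssim \tau_{q+1}^{-3k_3}$ for $k_3\geq 1$. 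Each summand is then bounded by $\tau_{q+1}^{-3N}$, and combining with $\rho_i^{1/2}\lesssim \tau_{q+1}^{-1}$ from \eqref{eq:i:max:bound} delivers \eqref{e:a_est_CN}.

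The main obstacle is the bookkeeping in the last step: one must verify that the lower bound on $\rho_i$ coming from $i\geq 0$ is strong enough, when combined with the explicit value of $\tau_{q+1}$ from \eqref{eq:tau:q:def} and the inductive $C^N$ bound \eqref{eq:Rq:2} on $\RRO$, to absorb the factors of $\rho_i^{-1}$ appearing in the composition estimate. Everything else reduces to the product rule and the previously established Lemma~\ref{lem:max:i}.
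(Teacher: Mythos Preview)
Your proposal is correct and follows essentially the same approach as the paper: the $L^\infty$ and $L^2$ bounds come directly from the boundedness of $\theta$, $\chi_{(i)}$, $\gamma_{(\xi)}$ together with \eqref{eq:psi:i:L2}, while the $C^N_{x,t}$ bound is obtained via the Leibniz rule, the chain-rule estimate \cite[Proposition~C.1]{BDLISZ15} applied to $\gamma_{(\xi)}(\Id-\RRO/\rho_i)$, the bounds on $\chi_{(i)}$ and $\RRO$ from Lemma~\ref{lem:max:i} and \eqref{eq:Rq:2}, and the absorption of $\rho_i^{1/2}$ into $\tau_{q+1}^{-1}$ via \eqref{eq:i:max:bound}. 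Your explicit verification that $\rho_i^{-1}\lambda_q^4 \lesssim \tau_{q+1}^{-1}$ is exactly the parameter inequality the paper uses (implicitly, having already recorded it in the proof of Lemma~\ref{lem:max:i}).
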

\begin{proof}[Proof of Lemma~\ref{lem:many:bounds}]
The bound \eqref{e:a_est} follows directly from the definitions \eqref{eq:a:oxi:def} and \eqref{e:rho_i_def},  the boundedness of the functions $\theta$, $\chi_{(i)}$, and $\gamma_{(\xi)}$. Using also \eqref{eq:psi:i:L2}, the estimate \eqref{e:a_est_L2} follows similarly.
In order to prove \eqref{e:a_est_CN}, we apply derivatives to \eqref{eq:a:oxi:def}, use the bounds previously established in Lemma~\ref{lem:max:i}, use~\cite[Proposition C.1]{BDLISZ15} and the bound \eqref{eq:Rq:2} for $\RRO$, combined with property $ \norm{\theta}_{C^M}\les \tau_{q+1}^{-M}$. The additional factor of $\tau_{q+1}^{-1}$ when compared to \eqref{eq:psi:i:CN} is to absorb the factor  of $\rho_i^{1/2}$ via \eqref{eq:i:max:bound}.   
\end{proof}

As a consequence of Lemma~\ref{lem:many:bounds} and the definitions \eqref{eq:w:q+1:p:def}, \eqref{eq:w:q+1:c:def}, and \eqref{e:temporal_corrector}, we obtain the following bounds:

\begin{proposition}
\label{prop:perturbation}
The principal part and of the velocity perturbation, the incompressibility, and the temporal correctors obey the bounds 
\begin{align}
\norm{w_{q+1}^{(p)}}_{L^2}  
&\leq \frac{1}{2} \delta_{q+1}^{\sfrac 12} \label{e:wp_est}\\
\norm{w_{q+1}^{(p)}}_{W^{N,p}}
&\les \tau_{q+1}^{-2} \ell_{\perp}^{\sfrac{2}{p}-1}\ell_{\|}^{\sfrac{1}{p}-\sfrac12} \lambda_{q+1}^N\label{e:wp_N_est}\\
\norm{w_{q+1}^{(c)}}_{W^{N,p}}+\norm{w_{q+1}^{(t)}}_{W^{N,p}}
&\les   \mu^{-1} \tau_{q+1}^{-3}     \ell_{\perp}^{\sfrac{2}{p}-2}\ell_{\|}^{\sfrac{1}{p}-1} \lambda_{q+1}^N \les \lambda_{q+1}^{- \frac{5-4\alpha}{16}}   \ell_{\perp}^{\sfrac{2}{p}-1}\ell_{\|}^{\sfrac{1}{p}-\sfrac12} \lambda_{q+1}^N
\label{e:wct_N_est}
\end{align}
for $N\in\mathbb N$, and $p>1$.  
\end{proposition}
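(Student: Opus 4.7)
My plan is to combine the amplitude bounds of Lemma~\ref{lem:many:bounds}, the building-block estimates \eqref{e:phi_Lp_bnd}--\eqref{e:W_Lp_bnd}, the disjoint support property \eqref{eq:Messi}, and the anticipated $L^p$ de-correlation principle (Lemma~\ref{lem:Lp:independence}) asserting that when $a$ oscillates much slower than $W$, one has $\|a W\|_{L^p}\lesssim \|a\|_{L^p}\|W\|_{L^p}$ up to negligible errors. For the $L^2$ bound \eqref{e:wp_est}, the disjoint support property \eqref{eq:Messi} yields
\[
\|w_{q+1}^{(p)}\|_{L^2}^2=\sum_{i,\xi}\int_{\T^3} a_{(\xi)}^2 |W_{(\xi)}|^2\,dx.
\]
Since $|W_{(\xi)}|^2=\psi_{(\xi)}^2\phi_{(\xi)}^2$ has mean one and oscillates at frequency $\sim N_\Lambda\lambda_{q+1}\ell_{\perp}$, I decompose $|W_{(\xi)}|^2=1+\Proj_{\neq 0}|W_{(\xi)}|^2$. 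The main contribution $\sum_{i,\xi}\|a_{(\xi)}\|_{L^2}^2$ is bounded via \eqref{e:a_est_L2} and the explicit form of $\rho_i$ in \eqref{e:rho_i_def} by $\sum_i\rho_i 4^{-i}\lesssim \lambda_q^{-\eps_R/4}\delta_{q+1}(i_{\max}+1)\lesssim \lambda_q^{-\eps_R/8}\delta_{q+1}$, using \eqref{eq:i:max:bound} and taking $a$ large. The oscillatory error is handled by Lemma~\ref{lem:Lp:independence} applied with slow factor $a_{(\xi)}^2$ and high-frequency factor $\Proj_{\neq 0}|W_{(\xi)}|^2$, producing a small loss factor of the form $(\lambda_{q+1}\ell_\perp)^{-s}$ which is negligible for $a$ sufficiently large.

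For \eqref{e:wp_N_est} I apply the Leibniz rule:
\begin{align*}
\|a_{(\xi)}W_{(\xi)}\|_{W^{N,p}}\lesssim \sum_{k=0}^{N}\|a_{(\xi)}\|_{C^{N-k}}\|\nabla^k W_{(\xi)}\|_{L^p}.
\end{align*}
Each derivative on $a_{(\xi)}$ costs $\tau_{q+1}^{-3}$ by \eqref{e:a_est_CN}, while each derivative on $W_{(\xi)}$ costs only $\lambda_{q+1}\gg \tau_{q+1}^{-3}$ by \eqref{e:W_Lp_bnd}, so the dominant term is $k=N$. Bounding $\|a_{(\xi)}\|_{L^\infty}\lesssim \rho_i^{1/2}\lesssim \tau_{q+1}^{-1}$ from \eqref{e:a_est} and \eqref{eq:i:max:bound}, summing over $i\leq i_{\max}\lesssim \log\tau_{q+1}^{-1}$, and absorbing the logarithm into an additional factor $\tau_{q+1}^{-1}$ produces \eqref{e:wp_N_est}.

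For the corrector bounds \eqref{e:wct_N_est} I treat the two pieces separately. For the temporal corrector \eqref{e:temporal_corrector}, the boundedness of $\Proj_H$ and $\Proj_{\neq 0}$ on $L^p$ for $1<p<\infty$ reduces matters to estimating $\|a_{(\xi)}^2\phi_{(\xi)}^2\psi_{(\xi)}^2\|_{W^{N,p}}$; a Leibniz expansion using \eqref{e:phi_Lp_bnd}--\eqref{e:psi_Lp_bnd} (applied to $\phi^2$ on $\T^2$ and $\psi^2$ on $\T$) gives $\ell_{\perp}^{2/p-2}\ell_{\|}^{1/p-1}\lambda_{q+1}^N$, and $\|a_{(\xi)}^2\|_{L^\infty}\lesssim \rho_i\lesssim \tau_{q+1}^{-2}$; combined with the explicit $\mu^{-1}$ prefactor this yields the first inequality in \eqref{e:wct_N_est}. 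For the incompressibility corrector \eqref{eq:w:q+1:c:def}, both summands gain an extra factor $\lambda_{q+1}^{-2}$ relative to $W_{(\xi)}$ (from the $\lambda_{q+1}^{-2}$ embedded in $V_{(\xi)}$ for the first term, and the explicit prefactor for the second); Leibniz expansion combined with Lemma~\ref{lem:many:bounds} and \eqref{e:phi_Lp_bnd}--\eqref{e:W_Lp_bnd} produces the same bound, the extra factor $\tau_{q+1}^{-1}$ (relative to $w_{q+1}^{(p)}$) arising from the one or two derivatives on $a_{(\xi)}$ forced by the curl structure of \eqref{eq:w:q+1:c:def}. The second inequality in \eqref{e:wct_N_est} is immediate from \eqref{eq:useful}.

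The main obstacle will be the bookkeeping of $\tau_{q+1}$ powers: every derivative on an amplitude $a_{(\xi)}$ costs $\tau_{q+1}^{-3}$ and each summation over $i\leq i_{\max}$ contributes an extra logarithm, so one must verify the combined cost never exceeds the advertised $\tau_{q+1}^{-2}$ (for $w_{q+1}^{(p)}$) or $\tau_{q+1}^{-3}$ (for the correctors). This hinges on $\tau_{q+1}^{-1}=\lambda_q^{7+\eps_R/4}\delta_{q+1}^{-1/2}$ being much smaller than $\lambda_{q+1}^{O(1/b)}$, which follows from choosing $b$ large enough and is consistent with the standing assumption $b(5-4\alpha)\geq 1000$. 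A secondary issue is the de-correlation step used in \eqref{e:wp_est}: one must verify that the slow frequency $\sim \tau_{q+1}^{-3}$ of $a_{(\xi)}^2$ is dominated by the oscillation frequency $\sim \lambda_{q+1}\ell_{\perp}$ of $\Proj_{\neq 0}|W_{(\xi)}|^2$, which again reduces to a parameter check given \eqref{e:param_def}.
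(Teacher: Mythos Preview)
Your proposal is essentially correct and follows the same strategy as the paper: Leibniz expansion combined with the amplitude bounds of Lemma~\ref{lem:many:bounds} and the building-block estimates \eqref{e:phi_Lp_bnd}--\eqref{e:W_Lp_bnd}, together with the de-correlation Lemma~\ref{lem:Lp:independence} for the $L^2$ bound. The treatment of \eqref{e:wp_N_est} and \eqref{e:wct_N_est} is the same as the paper's, modulo bookkeeping (note that the second inequality in \eqref{e:wct_N_est} needs not only \eqref{eq:useful} but also the bound $\tau_{q+1}^{-1}\lesssim \lambda_{q+1}^{(5-4\alpha)/100}$ from \eqref{eq:useful:too}).

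The only substantive difference---and the one place your argument has a gap---is your treatment of \eqref{e:wp_est}. You expand the square using disjoint supports and split $|W_{(\xi)}|^2 = 1 + \Proj_{\neq 0}|W_{(\xi)}|^2$; this is fine, but your handling of the oscillatory error misapplies Lemma~\ref{lem:Lp:independence}. That lemma only asserts $\|fg\|_{L^p}\lesssim C_f\|g\|_{L^p}$, with \emph{no} decay factor of the form $(\lambda_{q+1}\ell_\perp)^{-s}$. Applied with $f = a_{(\xi)}^2$ and $g = \Proj_{\neq 0}|W_{(\xi)}|^2$ in $L^1$, it would give a bound of the same order as the main term, not something negligible. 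The paper avoids this issue by applying Lemma~\ref{lem:Lp:independence} directly to $f = a_{(\xi)}$ and $g = W_{(\xi)}$ in $L^2$ (after verifying the separation condition $\tau_{q+1}^{-5}\ll \lambda_{q+1}\ell_\perp$ in \eqref{eq:separation}), yielding $\|a_{(\xi)}W_{(\xi)}\|_{L^2}\lesssim \rho_i^{1/2}2^{-i}\|W_{(\xi)}\|_{L^2}\lesssim \rho_i^{1/2}2^{-i}$ in a single step, then summing via \eqref{e:rho_sum}. Your route can be salvaged by replacing the appeal to Lemma~\ref{lem:Lp:independence} with a direct integration-by-parts (or Fourier projection) argument on $\int a_{(\xi)}^2\,\Proj_{\neq 0}|W_{(\xi)}|^2\,dx$, which does produce the $(\tau_{q+1}^{-3}/(\lambda_{q+1}\ell_\perp))^M$ decay you claim; this is the mechanism underlying the \emph{proof} of Lemma~\ref{lem:Lp:independence}, but not its statement.
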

From the second estimate in \eqref{e:wct_N_est} it is clear that the incompressibility and temporal correctors obey better estimates than the principal corrector. 

In order to establish the bound \eqref{e:wp_est}, it is essential to use the fact that $a_{(\xi)}$ oscillates at a frequency which is much smaller than that of $W_{(\xi)}$, which allows us to appeal to the $L^p$ de-correlation lemma~\cite[Lemma 3.6]{BV}, which we recall here for convenience:
\begin{lemma}
\label{lem:Lp:independence}
Fix integers $M,\kappa,\lambda \geq 1$ such that
\begin{align}
\label{eq:Lp:independence:assume}
\frac{2\pi\sqrt{3} \lambda}{\kappa} \leq \frac{1}{3} \qquad \mbox{and} \qquad \lambda^{4} \frac{(2\pi\sqrt{3} \lambda)^M}{\kappa^M}  \leq 1\,.
\end{align}
Let $p \in \{1,2\}$, and let $f$ be a $\T^3$-periodic function such that there exists a constants $C_f$ such that
\begin{align*}
\|D^j f\|_{L^p} \leq C_f\lambda^j \,,
\end{align*}
for  all $1 \leq j \leq M+4$. 
In addition, let $g$ be a $(\sfrac{\T}{\kappa})^{3}$-periodic function. Then we have that 
\begin{align*}
 \|f g \|_{L^p} \lesssim C_f \|g\|_{L^p} \,,
\end{align*}
holds, where the implicit constant is universal.
\end{lemma}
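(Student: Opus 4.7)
The plan is to handle $p=2$ first via Fourier analysis, exploiting that since $g$ is $(\T/\kappa)^3$-periodic, the function $h := g^2 = |g|^2$ has Fourier coefficients $\hat h(n)$ vanishing outside the sublattice $\kappa\Z^3\subset\Z^3$. The $p=1$ case will then follow from the $p=2$ case by a Cauchy--Schwarz reduction that exploits the fact that $\sqrt{|g|}$ inherits the periodicity of $g$.

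For $p=2$, Parseval gives
\begin{align*}
\|fg\|_{L^2}^2 = \int_{\T^3} f^2 h\,dx = (2\pi)^3\sum_{n \in \kappa\Z^3}\widehat{f^2}(n)\,\overline{\hat h(n)}.
\end{align*}
I would split this sum into $n=0$ and $n\neq 0$. The zero mode yields $(2\pi)^{-3}\|f\|_{L^2}^2\|g\|_{L^2}^2\le (2\pi)^{-3}C_f^2\|g\|_{L^2}^2$, interpreting $C_f$ as also controlling $\|f\|_{L^2}$ (the $j=0$ endpoint implicit in the hypothesis). For $n \in \kappa\Z^3\setminus\{0\}$, integrating by parts $M$ times in a coordinate direction $i$ chosen so that $|n_i|\ge |n|/\sqrt{3}$ gives $|\widehat{f^2}(n)|\lesssim (\sqrt{3}/|n|)^M\|D^M(f^2)\|_{L^1}$. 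By Leibniz and Cauchy--Schwarz combined with $\|D^j f\|_{L^2}\le C_f\lambda^j$, we have $\|D^M(f^2)\|_{L^1}\lesssim C_f^2\lambda^M$, while $|\hat h(n)|\le (2\pi)^{-3}\|g\|_{L^2}^2$. Summing over $n \in \kappa\Z^3\setminus\{0\}$ (where $|n|\ge\kappa$), the nonzero-mode contribution is bounded by a Dirichlet-type lattice sum
\begin{align*}
\lesssim C_f^2\|g\|_{L^2}^2\sum_{m\in\Z^3\setminus\{0\}} \Bigl(\frac{2\pi\sqrt{3}\,\lambda}{\kappa|m|}\Bigr)^{\!M},
\end{align*}
which converges once $M\ge 4$. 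The first hypothesis $2\pi\sqrt{3}\,\lambda/\kappa\le 1/3$ ensures geometric decay and tames the universal constants from the sum, while the second hypothesis $\lambda^4(2\pi\sqrt{3}\,\lambda/\kappa)^M\le 1$ bounds the total error by $\lesssim \lambda^{-4}C_f^2\|g\|_{L^2}^2$, which is dominated by the main term.

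For $p=1$, observe that $|g|$, and hence $\sqrt{|g|}$, is also $(\T/\kappa)^3$-periodic. Cauchy--Schwarz then gives
\begin{align*}
\|fg\|_{L^1} = \int_{\T^3} |f|\sqrt{|g|}\cdot\sqrt{|g|}\,dx \le \bigl\|f\sqrt{|g|}\bigr\|_{L^2}\,\bigl\|\sqrt{|g|}\bigr\|_{L^2}.
\end{align*}
Applying the just-established $p=2$ bound with $g$ replaced by $\sqrt{|g|}$ (legitimate since this function has the required periodicity) yields $\|f\sqrt{|g|}\|_{L^2}\lesssim C_f\|\sqrt{|g|}\|_{L^2}$; since $\|\sqrt{|g|}\|_{L^2}^2=\|g\|_{L^1}$, we conclude $\|fg\|_{L^1}\lesssim C_f\|g\|_{L^1}$.

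The main obstacle is the quantitative bookkeeping in the $p=2$ case: one must balance the number $M$ of integrations by parts against the polynomial growth in $\lambda$ of $\|D^M(f^2)\|_{L^1}$, the convergence rate of $\sum_{m\ne 0}|m|^{-M}$ in $\Z^3$, and the universal constants $\sqrt{3}$ (from selecting a maximal coordinate of $n$) and $2\pi$ (from the torus period scaling). The two hypotheses are precisely calibrated so that these contributions combine to produce an error strictly dominated by the main term $(2\pi)^{-3}\|f\|_{L^2}^2\|g\|_{L^2}^2$, independently of $M$, $\lambda$, and $\kappa$.
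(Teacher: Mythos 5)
Your $p=2$ argument is essentially sound (Plancherel, support of $\widehat{|g|^2}$ on $\kappa\Z^3$, integration by parts on the nonzero modes), but the reduction of $p=1$ to $p=2$ is a genuine gap. The lemma's hypothesis is an $L^p$ bound with the \emph{same} $p$ as the conclusion: in the $p=1$ case you are only given $\|D^j f\|_{L^1}\le C_f\lambda^j$, whereas your Cauchy--Schwarz step applies the already-proved $p=2$ statement to the pair $(f,\sqrt{|g|})$, which requires $\|D^j f\|_{L^2}\le C_f\lambda^j$ --- not available. Nor can you upgrade $L^1$ to $L^2$ without cost: via $W^{3,1}(\T^3)\subset L^\infty$ and interpolation one only gets $\|D^jf\|_{L^2}\lesssim C_f\lambda^{j+\sfrac32}$, and the resulting loss $\lambda^{\sfrac32}$ in $C_f$ ruins the estimate. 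Plancherel is intrinsically an $L^2$ tool, and $|f|$ is not smooth, so the Fourier route does not directly extend to $p=1$ --- which is precisely the case the paper needs (the stress is measured in $L^1$). The paper does not prove the lemma but quotes it from \cite[Lemma 3.6]{BV}; the proof there is a physical-space argument: decompose $\T^3$ into cubes of side $2\pi/\kappa$, Taylor-expand $f$ on each cube, use the exact equidistribution of $\int_Q|g|^p=\kappa^{-3}\|g\|_{L^p}^p$ furnished by the periodicity of $g$, and control the Taylor data by Riemann-sum/Sobolev estimates; this treats $p=1$ and $p=2$ uniformly with only $L^p$ information, and it is where the extra four derivatives and the factor $\lambda^4$ in \eqref{eq:Lp:independence:assume} are actually spent.

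Two smaller points in the $p=2$ half. First, your lattice sum $\sum_{m\neq0}|m|^{-M}$ diverges for $M\in\{1,2,3\}$, while the lemma allows any $M\ge1$; the fix is to integrate by parts $M+4$ times (the hypothesis provides exactly those derivatives), giving the factor $\bigl(2\sqrt3\lambda/\kappa\bigr)^{M+4}\sum_{m\neq0}|m|^{-(M+4)}$, which is universally bounded by \eqref{eq:Lp:independence:assume}; note also that the Leibniz factor $2^M$ in $\|D^M(|f|^2)\|_{L^1}$ is not universal on its own and must be absorbed using $2\pi\sqrt3\lambda/\kappa\le\sfrac13$. Second, you correctly observe that a zeroth-order bound $\|f\|_{L^p}\le C_f$ is indispensable (a large constant $f$ is a counterexample to the statement with only $j\ge1$); this is a defect of the statement as written, and in the application $C_f\sim\rho_i^{\sfrac12}2^{-i}$ does control $\|a_{(\xi)}\|_{L^2}$, so your reading matches the intended use --- but it does not repair the $p=1$ reduction, which is where your proposal fails.
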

Using Lemma~\ref{lem:Lp:independence}, the proofs of  bounds \eqref{e:wp_est}--\eqref{e:wct_N_est} follow in the same spirit as that of \cite[Proposition 4.5]{BV}. 
\begin{proof}[Proof of Proposition~\ref{prop:perturbation}]
In order to prove \eqref{e:wp_est}, 
we use \eqref{e:a_est_L2} when $N=0$, and \eqref{e:a_est_CN} with $\lambda_q^{\sfrac{\eps_R}{8}} \delta_{q+1}^{-\sfrac 12} \leq \tau_{q+1}^{-1}$ for $N\geq 1$, to conclude that
\begin{align}
\norm{D^N a_{(\xi) }}_{L^2} \les\rho_i^{\sfrac 12} 2^{-i}   \tau_{q+1}^{-5N}\, ,
\label{eq:a:HN}
\end{align}
where the implicit constant depends only on $N$. 
Since $\mathbb W_{(\xi)}$ is $(\sfrac{\T}{\lambda_{q+1}\ell_{\perp}})^{3}$ periodic, in order to apply Lemma~\ref{lem:Lp:independence} with $\lambda = \tau_{q+1}^{-5}$ and $\kappa = \lambda_{q+1} \ell_\perp$, we first note that by \eqref{eq:tau:q:def} and \eqref{e:param_def}, we have 
\begin{align}
2 \pi \sqrt{3} \lambda \kappa^{-1} = 2 \pi \sqrt{3} \tau_{q+1}^{-5} \lambda_{q+1}^{-1} \ell_\perp^{-1} = 2 \pi \sqrt{3} \lambda_q^{35 +  \frac{5 \eps_R}{4}} \delta_{q+1}^{-\sfrac52} \lambda_{q+1}^{- \frac{5(5-4\alpha)}{24}} \leq \lambda_q^{36}  \lambda_{q+1}^{- \frac{5(5-4\alpha)}{24} + 5\beta} \leq \lambda_{q+1}^{-\frac{5-4\alpha}{6}}\, ,
\label{eq:separation}
\end{align}
by using that $\beta$ is sufficiently small and $b$ is sufficiently large, depending on $\alpha$. For instance, we may take 
\begin{align}
5 \beta \leq \frac{5-4\alpha}{50}  \qquad \mbox{and}  \qquad  \frac{36}{b}  \leq \frac{5-4\alpha}{50}\, .
\label{eq:new:beta:b}
\end{align}
In \eqref{eq:separation} we also used that $\lambda_1^{- \frac{15\beta}{2}} 2 \pi \sqrt{3} \leq 1$, once $a$ is chosen sufficiently large.
Therefore, after a short computation we see that the assumptions of Lemma~\ref{lem:Lp:independence} hold with the aforementioned $\kappa$ and $\lambda$, with {$M = 4$} in \eqref{eq:Lp:independence:assume}. Therefore, we only care about $N\leq 4$ in \eqref{eq:a:HN}, which also fixes the implicit constant in this inequality, and we may thus take $C_f$ to be proportional to $\rho_i^{\sfrac 12} 2^{-i}$. It thus follows from Lemma~\ref{lem:Lp:independence} and estimate \eqref{e:W_Lp_bnd}  with $M=N=0$ and $p=2$ that 
\begin{align}
\norm{a_{(\xi)} W_{(\xi)} }_{L^2}&
\lesssim \rho_i^{\sfrac 12} 2^{-i} \norm{ W_{(\xi)}}_{L^2} 
\lesssim \rho_i^{\sfrac 12} 2^{-i} \,.\notag
\end{align}
Upon summing over $i \in \{0,\ldots,i_{\rm max}\}$, and appealing to \eqref{e:rho_sum}, we obtain that 
\begin{align}
\norm{w_{q+1}^{(p)}}_{L^2} \les \delta_{q+1}^{\sfrac 12} \lambda_q^{-\sfrac{\eps_R}{16}} \leq \frac 12 \delta_{q+1}^{\sfrac 12}\,,\notag
\end{align}
by using the small negative power of $\lambda_q$ to absorb the implicit constants in the first inequality.

Consider the estimate \eqref{e:wp_N_est}. Observe that by definition~\eqref{eq:w:q+1:p:def}, estimate \eqref{e:W_Lp_bnd}  with $M=0$, and the bound \eqref{e:a_est_CN}, we have 
\begin{align}
\norm{w_{q+1}^{(p)}}_{W^{N,p}}
&\lesssim \sum_{i}\sum_{\xi\in \Lambda_{(i)}} \sum_{N'=0}^{N} \norm{a_{(\xi)}}_{C^{N-N'}}\norm{ W_{(\xi)}}_{W^{N',p}}\notag\\
&\lesssim \sum_{i=0}^{i_{\rm max}} \sum_{\xi\in \Lambda_{(i)}} \sum_{N'=0}^{N} \tau_{q+1}^{-3(N-N')-1}\ell_{\perp}^{\sfrac{2}{p}-1}\ell_{\|}^{\sfrac{1}{p}-\sfrac12}\lambda_{q+1}^{N'}
\notag\\
&\les \tau_{q+1}^{-2}\ell_{\perp}^{\sfrac{2}{p}-1}\ell_{\|}^{\sfrac{1}{p}-\sfrac12}\lambda_{q+1}^N \, .
\label{eq:tilde:R:3}
\end{align}
Here  we have again used \eqref{eq:i:max:bound} in order to sum over $i$, and have used the bound $\tau_{q+1}^{-3} \leq \lambda_{q+1}$ which holds since $\beta$ is small and $b$ is large. 

For the analogous bound on $w_{q+1}^{(c)}$, by  \eqref{e:phi_Lp_bnd}--\eqref{e:W_Lp_bnd}, estimate \eqref{e:a_est_CN}, the parameter estimates $\tau_{q+1}^{-3} \leq \lambda_{q+1}$ and $\ell_\perp \leq \ell_\|$, and Fubini (recall that $\psi_{(\xi)}$ and $\Phi_{(\xi)}$ are functions of one and respectively two variables which are orthogonal to each other), we have
\begin{align*}
&\norm{ \curl\left(\nabla a_{(\xi)}\times V_{(\xi)}
\right)+\frac{1}{\lambda_{q+1}^2 N_{\Lambda}^2}\nabla\left(a_{(\xi)}\psi_{(\xi)}\right)\times \curl\left(\Phi_{(\xi)}\xi\right)}_{W^{N,p}}\\
&\les \sum_{N'=0}^{N+1} \norm{a_{(\xi)}}_{C^{N+2-N'}} \norm{V_{(\xi)}}_{W^{N',p}}  + \frac{1}{\lambda_{q+1}^2} \sum_{N'=0}^N \sum_{N''= 0}^{N-N'+1} \norm{a_{(\xi)}}_{C^{N-N'+1-N''}}  \norm{\psi_{(\xi)}}_{W^{N'',p}} \norm{\Phi_{(\xi)}}_{W^{N'+1,p}}\\
&\les \sum_{N'=0}^{N+1} \tau_{q+1}^{-3(N+2-N') - 1}\lambda_{q+1}^{N'-2} + \sum_{N'=0}^N \sum_{N''= 0}^{N-N'+1} \tau_{q+1}^{-3(N+1-N'-N'') - 1} \ell_{\|}^{\sfrac 1p - \sfrac 12} \left(\frac{\ell_\perp \lambda_{q+1}}{\ell_{\|}}\right)^{N''} \ell_\perp^{\sfrac 2p - 1} \lambda_{q+1}^{N'-1}\\
&\les \tau_{q+1}^{-1} \ell_{\|}^{\sfrac 1p - \sfrac 12} \ell_\perp^{\sfrac 2p - 1} \lambda_{q+1}^{N} \left( \tau_{q+1}^{-3} \lambda_{q+1}^{-1}    \right)  \,.
\end{align*}
Summing over $0 \leq i \leq i_{\rm \max}$ loses an additional factor of $\tau_{q+1}^{-1}$, which yields the desired bound for the first term on the left of \eqref{e:wct_N_est}.  Similarly, to estimate the summands in the definition \eqref{e:temporal_corrector} of $w_{q+1}^{(t)}$ we use \eqref{e:phi_Lp_bnd}, \eqref{e:psi_Lp_bnd}, \eqref{e:a_est_CN}, the aforementioned parameter inequalities, and Fubini to obtain
\begin{align*}
\norm{\mu^{-1} \mathbb P_{H}\mathbb P_{\neq 0}\left(a_{(\xi)}^2\phi^2_{(\xi)} \psi^2_{(\xi)}\xi \right)}_{W^{N,p}}
&\les \mu^{-1} \sum_{N'=0}^{N} \sum_{N''=0}^{N'}  \norm{a_{(\xi)}^2}_{C^{N-N'}} \norm{\phi_{(\xi)}^2}_{W^{N'',p}} \norm{\psi_{(\xi)}^2}_{W^{N'-N'',p}} \\
&\les \mu^{-1} \tau_{q+1}^{-3(N-N')-2} \ell_\perp^{\sfrac{2}{p}-2} \lambda_{q+1}^{N''} \ell_{\|}^{\sfrac{1}{p} -1} \left( \frac{\ell_\perp \lambda_{q+1}}{\ell_{\|}}\right)^{N'-N''}\\
&\les \mu^{-1} \tau_{q+1}^{-2} \ell_\perp^{\sfrac{2}{p}-2}  \ell_{\|}^{\sfrac{1}{p} -1 }  \lambda_{q+1}^{N}\, .
\end{align*}
Summing in $i$ loses a factor of $\tau_{q+1}^{-1}$ cf.~\eqref{eq:i:max:bound}, and we obtain the bound for the second term on the left of  \eqref{e:wct_N_est}.

For the proof of \eqref{e:wct_N_est}, we additionally note that  \eqref{eq:useful}, and \eqref{eq:new:beta:b}  imply the parameter inequalities  
\begin{align}
\label{eq:useful:too}
\tau_{q+1}^{-1} \leq \lambda_q^8 \leq \lambda_{q+1}^{\frac{5-4\alpha}{100}}\, , \qquad \tau_{q+1}^{-3} \lambda_{q+1}^{-1} \leq \mu^{-1} \tau_{q+1}^{-1} \ell_\perp^{-1} \ell_{\|}^{-\sfrac 12} \, , \qquad \mbox{and} \qquad \mu^{-1} \tau_{q+1}^{-3} \ell_\perp^{-1} \ell_{\|}^{-\sfrac 12} \leq \lambda_{q+1}^{- \frac{5-4\alpha}{16}} \, ,
\end{align}
which concludes the proof of the proposition.
\end{proof}

The following bound shows that \eqref{e:interative_v} holds, and collects a number of useful bounds for the cumulative velocity increment $w_{q+1}$, which in turn imply that \eqref{eq:v:L2:x} and \eqref{eq:v:H3:x} hold at level $q+1$.

\begin{proposition}
\label{p:perturbation}
The bounds
\begin{align}
\norm{w_{q+1}}_{L^2}  &\leq \frac{3}{4} \delta_{q+1}^{\sfrac12}\label{eq:w:q+1:L2}\\
\norm{v_{q+1} - v_q}_{L^2} &\leq   \delta_{q+1}^{\sfrac 12}\label{eq:increment:L2} \\
\norm{w_{q+1}}_{W^{s,p}}
&\les\tau_{q+1}^{-2}\ell_{\perp}^{\sfrac{2}{p}-1}\ell_{\|}^{\sfrac{1}{p}-\sfrac12} \lambda_{q+1}^{s}\label{e:w_est_deriv}\\
\norm{\partial_t w_{q+1}}_{H^2} 
&\les \ \lambda_{q+1}^{5}\label{e:w_t_est_deriv}
\end{align}
hold for $1 < p\leq \infty$ and $s \geq 0$.
\end{proposition}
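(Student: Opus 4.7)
The plan is to derive all four estimates from the detailed bounds on the principal, incompressibility, and temporal parts of $w_{q+1}$ already recorded in Proposition~\ref{prop:perturbation}, combined with the bound on $\bar v_q - v_q$ from Proposition~\ref{prop:bar:vq:Rq}. The first three bounds are essentially immediate, while \eqref{e:w_t_est_deriv} requires one additional layer of bookkeeping and crucially exploits the choice $\mu = \lambda_{q+1}^{2\alpha-1}\ell_{\|}/\ell_{\perp}$ from \eqref{e:param_def}.

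For \eqref{eq:w:q+1:L2}, I would decompose $w_{q+1} = w_{q+1}^{(p)} + w_{q+1}^{(c)} + w_{q+1}^{(t)}$ and apply the triangle inequality. The principal part is bounded by $\frac12 \delta_{q+1}^{\sfrac12}$ via \eqref{e:wp_est}, while the two correctors are controlled by \eqref{e:wct_N_est} at $p=2$ and $N=0$: here $\ell_\perp^{\sfrac{2}{p}-1}\ell_\|^{\sfrac{1}{p}-\sfrac12}$ reduces to $1$ and the gain $\lambda_{q+1}^{-(5-4\alpha)/16}$ absorbs all implicit constants once $a$ is taken large, contributing at most $\frac14 \delta_{q+1}^{\sfrac12}$. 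Then \eqref{eq:increment:L2} follows by writing $v_{q+1} - v_q = w_{q+1} + (\bar v_q - v_q)$ and combining \eqref{eq:w:q+1:L2} with $\|\bar v_q - v_q\|_{L^2} \leq \frac14 \delta_{q+1}^{\sfrac12}$ from \eqref{eq:vq:vell}. The estimate \eqref{e:w_est_deriv} for integer $s$ is immediate by triangle inequality from \eqref{e:wp_N_est} and \eqref{e:wct_N_est} (the principal part dominates, using the parameter inequalities in \eqref{eq:useful:too}), and extends to all real $s \geq 0$ by standard interpolation in the Sobolev scale.

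The main technical step is \eqref{e:w_t_est_deriv}. I would compute $\partial_t w_{q+1}$ term by term using the definitions \eqref{eq:w:q+1:p:def}, \eqref{eq:w:q+1:c:def}, and \eqref{e:temporal_corrector}, applying the Leibniz rule and exploiting that $\phi_{(\xi)}$ and $\Phi_{(\xi)}$ are time-independent. The worst contribution comes from the principal piece through $a_{(\xi)}\partial_t W_{(\xi)}$: by \eqref{e:W_Lp_bnd} with $M=1$, the cost of one time derivative on $W_{(\xi)}$ is precisely the factor $\ell_\perp \lambda_{q+1} \mu / \ell_\| = \lambda_{q+1}^{2\alpha}$, so that $\|D^N \partial_t W_{(\xi)}\|_{L^2} \lesssim \lambda_{q+1}^{N+2\alpha}$ at $p=2$. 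Using that $H^2(\T^3)$ is an algebra, combined with the $C^2$ bound \eqref{e:a_est_CN} on $a_{(\xi)}$ and summing the at most $i_{\max} + 1 \lesssim \log \tau_{q+1}^{-1}$ active indices via Lemma~\ref{lem:max:i}, each principal summand contributes at most $\tau_{q+1}^{-M_0}\lambda_{q+1}^{2+2\alpha}$ for some fixed $M_0$. For $w_{q+1}^{(c)}$ the extra $\lambda_{q+1}^{-2}$ built into the definition of $V_{(\xi)}$ gives a strictly smaller bound, and for $w_{q+1}^{(t)}$ the prefactor $\mu^{-1}$ cancels precisely the $\mu$-factor produced by $\partial_t \psi_{(\xi)}^2$, reducing its size to that of $w_{q+1}^{(p)}$. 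The main obstacle is keeping careful track of the various $\tau_{q+1}^{-1}$ losses.

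To close \eqref{e:w_t_est_deriv}, I would invoke the parameter inequalities in \eqref{eq:useful:too}: since $\tau_{q+1}^{-1} \leq \lambda_q^{8} = \lambda_{q+1}^{8/b}$ and $2\alpha < \sfrac 52$, choosing $b$ sufficiently large (consistent with \eqref{eq:new:beta:b}) ensures that $\tau_{q+1}^{-M_0}\lambda_{q+1}^{2+2\alpha} \leq \lambda_{q+1}^{5}$. Summing the contributions of the three pieces of $w_{q+1}$ yields \eqref{e:w_t_est_deriv}, completing the proof of Proposition~\ref{p:perturbation}.
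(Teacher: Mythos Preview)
Your proposal is correct and follows essentially the same route as the paper: the first three bounds are read off directly from Proposition~\ref{prop:perturbation} together with \eqref{eq:vq:vell}, and for \eqref{e:w_t_est_deriv} you correctly identify the dominant contribution as $a_{(\xi)}\partial_t W_{(\xi)}$ with cost $\ell_\perp\lambda_{q+1}\mu/\ell_\| = \lambda_{q+1}^{2\alpha}$, then close using $\tau_{q+1}^{-1}\leq \lambda_{q+1}^{(5-4\alpha)/100}$ from \eqref{eq:useful:too}. The only places to tighten are minor bookkeeping: you will need the $C^3_{x,t}$ (not $C^2$) bound on $a_{(\xi)}$ to control two spatial and one temporal derivative on the product, and the $\partial_t w_{q+1}^{(t)}$ term requires a short separate computation (the $\mu^{-1}$ does cancel the $\mu$ from $\partial_t\psi_{(\xi)}^2$, but the resulting expression $\ell_\|^{-3/2}\lambda_{q+1}^3$ is not literally identical to the principal bound, merely no larger).
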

Before turning to the proof of the proposition, we note that estimate \eqref{eq:increment:L2} and the inductive assumption \eqref{eq:v:L2:x} at level $q$ imply that 
\begin{align}
\norm{v_{q+1}}_{L^2} \leq 2 \delta_0^{\sfrac 12} - \delta_q^{\sfrac 12} + \delta_{q+1}^{\sfrac 12} \leq 2 \delta_0^{\sfrac 12} - \delta_{q+1}^{\sfrac 12}\,,
\label{eq:T*}
\end{align}
which is a consequence of $2 \lambda_q^\beta \leq \lambda_{q+1}^{\beta}$. Thus, \eqref{eq:v:L2:x} holds at level $q+1$. 
Similarly, from \eqref{eq:vq:global:2} and \eqref{e:w_est_deriv} with $s=3$ and $p=2$, and the parameter inequality \eqref{eq:useful:too}, we conclude 
\begin{align}
\norm{v_{q+1}}_{H^3} \les \lambda_q^{4} +\tau_{q+1}^{-2} \lambda_{q+1}^3 \les \lambda_{q+1}^{\frac{4}{b} + 3 + \frac{5-4\alpha}{50}} \les \lambda_{q+1}^{\frac 72} \leq \lambda_{q+1}^{4} \, ,
\label{eq:T*T*}
\end{align}
where we have used that $b$ is large and that $\alpha \in [1,\sfrac 54)$. The remaining power of $\lambda_{q+1}^{-\sfrac{1}{2}}$ may be used to absorb the implicit constant, and thus \eqref{eq:v:H3:x} holds also at level $q+1$.

Similarly to \eqref{eq:T*T*}, we establish two bounds which will be useful  in Section \ref{sec:stress} for  the proof of Corollary~\ref{cor:stress:main}.
First, from \eqref{e:w_est_deriv} with $s= \sfrac 92$ and $p=2$, and \eqref{eq:vq:1} with $M=0$ and $N=2$,  it follows that
\begin{align}
\norm{v_{q+1}}_{L^\infty(\sfrac T3, \sfrac{2T}{3};H^{\sfrac 92})} \leq \norm{w_{q+1}}_{H^{\sfrac 92}} + \norm{\overline v_{q}}_{L^\infty(\sfrac T3, \sfrac{2T}{3};H^{5})} \les  \tau_{q+1}^{-2}  \lambda_{q+1}^{\sfrac 92} + \tau_{q+1}^{-2}\lambda_q^{4} \les \lambda_{q+1}^{5}\,.
\label{eq:T*T*T*}
\end{align}
Here we have also used the parameter inequality \eqref{eq:useful:too}. Similarly, by \eqref{e:w_t_est_deriv} and the bound \eqref{eq:vq:1} with $M=1$ and $N=0$ we obtain 
\begin{align}
\norm{\partial_t v_{q+1}}_{L^\infty(\sfrac T3, \sfrac{2T}{3};H^{2})}  \leq \norm{\partial_t w_{q+1}}_{H^2} +  \norm{\partial_t \overline v_q}_{L^\infty(\sfrac T3, \sfrac{2T}{3};H^{3})} \les \lambda_{q+1}^{5} +  \tau_{q+1}^{-1} \lambda_q^{4} \les \lambda_{q+1}^{5}
\, .
\label{eq:T*T*T*T*}
\end{align}

\begin{proof}[Proof of Proposition~\ref{p:perturbation}]
The estimates \eqref{eq:w:q+1:L2} and \eqref{eq:increment:L2} are a direct consequence of the already established bounds and the definitions \eqref{eq:w:q+1:def}  and \eqref{eq:v:q+1:def}. Indeed, combining \eqref{e:wp_est} with \eqref{e:wct_N_est} with $p=2$ and $N=0$, we conclude that 
\[
\norm{w_{q+1}}_{L^2} \delta_{q+1}^{-\sfrac 12} \leq \frac 12 + \lambda_{q+1}^{\beta - \frac{5-4\alpha}{16}} \leq \frac 34\,,
\] 
since $\beta$ is sufficiently small (see~\eqref{eq:new:beta:b}). From \eqref{eq:w:q+1:L2} and \eqref{eq:vq:vell} we obtain 
\[
\norm{v_{q+1} - v_q}_{L^2} \leq \norm{\overline v_q - v_q}_{L^2} + \norm{w_{q+1}}_{L^2} \leq \delta_{q+1}^{\sfrac 12} \, ,
\] 
as desired.
The estimate \eqref{e:w_est_deriv} with non-integer values of $s$ follows by interpolation from the case $s  \in \N$.   Comparing \eqref{e:wp_N_est} with the second inequality in \eqref{e:wct_N_est}, we see that the bound for the principal corrector is the worst, since $\lambda_{q+1}^{-\frac{5-4\alpha}{16}} \leq 1 \leq \tau_{q+1}^{-1}$, and thus \eqref{e:w_est_deriv} follows directly. 

Thus it is left to prove \eqref{e:w_t_est_deriv}. An estimate on $\partial_t w_{q+1}^{(p)}$ will clearly dominate an estimate on $\partial_t w_{q+1}^{(c)}$. Hence it suffices to estimate $\partial_t w_{q+1}^{(p)}$ and $\partial_t w_{q+1}^{(c)}$. First consider $\partial_t w_{q+1}^{(p)}$. From the bound \eqref{e:W_Lp_bnd}  with $N=2, M=1, p=2$, estimate \eqref{e:a_est_CN} with $N=3$, and the definition \eqref{e:param_def} of $\mu$, we obtain
\begin{align*}
\norm{\partial_t w_{q+1}^{(p)}}_{H^2}  
&\lesssim \sum_{i}\sum_{\xi\in \Lambda_{(i)}}\norm{a_{(\xi)}}_{C^{3}_{x,t}}\norm{\partial_t   W_{(\xi)}}_{H^2}  \\
&\lesssim \sum_{i=0}^{i_{\rm max}} \tau_{q+1}^{-10} \ell_{\perp}\ell_{\|}^{-1}\lambda_{q+1}^3\mu\\
&\les  \tau_{q+1}^{-11} \lambda_{q+1}^{2\alpha+2}\\
&\les  \lambda_{q+1}^{5}
\, .
\end{align*}
where in the last inequality we used that \eqref{eq:useful:too} provides an upper bound for $\tau_{q+1}^{-1}$, and that $\alpha < \sfrac 54$. In order to estimate $\partial_t w_{q+1}^{(t)}$ we use estimates \eqref{e:phi_Lp_bnd} and \eqref{e:psi_Lp_bnd}, Fubini, and the bound \eqref{e:a_est_CN} to obtain
\begin{align*}
\norm{\partial_t w_{q+1}^{(t)}}_{H^2} 
&\lesssim \mu^{-1} \sum_{i}\sum_{\xi\in \Lambda_{(i)}} \left( \norm{a_{(\xi)}^2}_{C^{2}_{x,t}}  \norm{\phi^2_{(\xi)} \partial_t \psi^2_{(\xi)}}_{H^2} + \norm{a_{(\xi)}^2}_{C^{3}_{x,t}} \norm{\phi^2_{(\xi)} \psi^2_{(\xi)}}_{H^2} \right) \\
&\lesssim \mu^{-1}  \sum_{i=0}^{i_{\rm max}}\sum_{\xi\in \Lambda_{(i)}} \sum_{N=0}^{2} \left( \tau_{q+1}^{-8}  \norm{\phi^2_{(\xi)}}_{H^N} \norm{\partial_t \psi^2_{(\xi)}}_{H^{2-N}} +  \tau_{q+1}^{-11} \norm{\phi^2_{(\xi)}}_{H^N} \norm{\psi^2_{(\xi)}}_{H^{N-2}} \right) \\
&\lesssim \mu^{-1} \sum_{i=0}^{i_{\rm max}} \left( \tau_{q+1}^{-8} \ell_{\perp}^{-1}\ell_{\|}^{-\sfrac12}\left(\frac{\ell_{\perp}\lambda_{q+1}\mu}{\ell_{\|}}\right)\lambda_{q+1}^2 +\tau_{q+1}^{-11} \ell_{\perp}^{-1}\ell_{\|}^{-\sfrac12} \lambda_{q+1}^2 \right) \\
&\les  \tau_{q+1}^{-9}  \ell_{\|}^{-\sfrac32} \lambda_{q+1}^3 +\tau_{q+1}^{-12} \mu^{-1} \ell_{\perp}^{-1}\ell_{\|}^{-\sfrac12} \lambda_{q+1}^2 \\
& \lesssim \lambda_{q+1}^{5}  \, .
\end{align*}
Here we have used explicitly the parameter choice~\eqref{e:param_def}, the parameter inequality~\eqref{eq:useful}, the first bound in \eqref{eq:useful:too}, the bound $\ell_{\|}^{-1}\leq \ell_{\perp}^{-1}\leq \lambda_{q+1}$, and the inequality $i_{\rm max} \les \tau_{q+1}^{-1}$.
\end{proof}

\section{Convex integration step: the Reynolds stress}\label{sec:stress}

The main result of this section may be summarized as:
\begin{proposition}
\label{prop:stress:main}
There exists an $\eps_R>0$ sufficiently small, and a  {parameter} $p>1$ sufficiently close to $1$, depending only on $\alpha$, $b$, and $\beta$, such that the following holds: There exists a traceless symmetric $2$ tensor $\widetilde R$ and a scalar pressure field $\widetilde p$, defined implicitly in \eqref{eq:tilde:R:1} below, satisfying
\begin{subequations}\label{eq:tilde:R:def}
\begin{align}
\partial_t v_{q+1}+\div\left(v_{q+1}\otimes v_{q+1}\right)+\nabla \widetilde p +(-\Delta )^{\alpha}v_{q+1} &=\div \widetilde R \, ,\\
\div v_{q+1} & = 0 \, .
\end{align}
\end{subequations}
Moreover $\tilde R$ obeys the bound 
\begin{align}
\norm{\widetilde R}_{L^p}\lesssim \lambda_{q+1}^{- 2 \eps_R}\delta_{q+2}
\label{eq:tilde:R:Lp}\,,
\end{align}
where the constant depends on the choice of $p$ and $\eps_R$, but is independent of $q$, and $\tilde R$ has the support property
\begin{align}
\supp \tilde R  \subset \T^3 \times \left \{ t\in [0,T] \colon {\rm dist}(t, \GG^{(q+1)}) > \tau_{q+1} \right\}.
\label{eq:tilde:R:supp}
\end{align}
\end{proposition}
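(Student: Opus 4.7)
The plan is to substitute $v_{q+1} = \vo + w_{q+1}$ into the Navier-Stokes equation, use that $(\vo, \mathring{\overline R}_q)$ solves \eqref{eq:NSE:Reynolds} by Proposition~\ref{prop:bar:vq:Rq}, and decompose the resulting error into the four standard convex-integration pieces:
\begin{align*}
\div\tilde R - \nabla \tilde p &= \underbrace{(-\Delta)^\alpha w_{q+1} + \partial_t(w_{q+1}^{(p)} + w_{q+1}^{(c)})}_{\text{linear error}} + \underbrace{\div(\vo \otimes w_{q+1} + w_{q+1} \otimes \vo)}_{\text{Nash error}} \\
&\quad + \underbrace{\div(w_{q+1}^{(p)} \otimes w_{q+1}^{(p)} + \mathring{\overline R}_q) + \partial_t w_{q+1}^{(t)}}_{\text{oscillation error}} + \underbrace{\div\bigl(w_{q+1}^{(p)} \otimes w_{q+1}^{\mathrm{cc}} + w_{q+1}^{\mathrm{cc}} \otimes w_{q+1}\bigr)}_{\text{corrector error}},
\end{align*}
where $w_{q+1}^{\mathrm{cc}} := w_{q+1}^{(c)} + w_{q+1}^{(t)}$. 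Each piece will be written as $\div$ of a symmetric trace-free tensor (plus a pressure gradient) via the inverse divergence operator $\RSZ$ of \eqref{eq:RSZ}, whose composition with $|\nabla|$ is a Calder\'on--Zygmund operator.

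The heart of the argument is the oscillation error. The disjoint-support property \eqref{eq:Messi} combined with the identity \eqref{e:WW_id} gives
\[
w_{q+1}^{(p)} \otimes w_{q+1}^{(p)} + \mathring{\overline R}_q = \theta^2 \sum_i \rho_i \chi_{(i)}^2 \Id + \sum_{i,\xi} a_{(\xi)}^2 \Proj_{\neq 0}(W_{(\xi)} \otimes W_{(\xi)}),
\]
whose first summand is a pure pressure. For the second, the Leibniz rule together with \eqref{eq:useful:2}, the definition \eqref{e:temporal_corrector} of $w_{q+1}^{(t)}$, and the observation that $(I - \Proj_H \Proj_{\neq 0})g = \fint g + \nabla\Delta^{-1}\div g$ is a gradient of a scalar, will reduce the oscillation error modulo pressure to
\[
R_{\text{osc}} = \RSZ\Bigl(\sum_{i,\xi} \nabla(a_{(\xi)}^2) \cdot \Proj_{\neq 0}(W_{(\xi)} \otimes W_{(\xi)})\Bigr) - \tfrac{1}{\mu}\RSZ\Bigl(\sum_{i,\xi} \Proj_{\neq 0}\bigl(\partial_t(a_{(\xi)}^2)\, \phi_{(\xi)}^2 \psi_{(\xi)}^2 \xi\bigr)\Bigr).
\]
Every summand is now mean-free and high-frequency in space, so $|\nabla|^{-1}$ gains a factor of roughly $(\lambda_{q+1} \ell_\perp)^{-1}$.

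Each of the four error pieces is then estimated in $L^p$ for $p = 1 + \eta$ with $\eta = \eta(\alpha, b, \beta) > 0$ sufficiently small. The slow coefficient factors $a_{(\xi)}, \nabla a_{(\xi)}^2, \partial_t a_{(\xi)}^2$ are controlled by powers of $\tau_{q+1}^{-1}$ via \eqref{e:a_est}--\eqref{e:a_est_CN}, while the high-frequency factors $W_{(\xi)}, \phi_{(\xi)}, \psi_{(\xi)}$ are bounded by \eqref{e:phi_Lp_bnd}--\eqref{e:W_Lp_bnd}; the $L^p$ decorrelation Lemma~\ref{lem:Lp:independence} separates the two, with applicability guaranteed by an analog of \eqref{eq:separation}. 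The parameter choice \eqref{e:param_def} is designed so that the hyperdissipative and linear-time-derivative errors balance at size $\tau_{q+1}^{-2} \ell_\perp^{2/p-1} \ell_\|^{1/p - 1/2} \lambda_{q+1}^{2\alpha - 1}$; the corrector error is a factor $\lambda_{q+1}^{-(5-4\alpha)/16}$ smaller by \eqref{e:wct_N_est}; and the oscillation error enjoys the additional $|\nabla|^{-1}$ gain together with $\mu^{-1} \ell_\perp^{-1} \ell_\|^{-1/2} = \lambda_{q+1}^{-(5-4\alpha)/8}$ from \eqref{eq:useful}.

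The support property \eqref{eq:tilde:R:supp} is automatic: each term of $\tilde R$ involves either $w_{q+1}$ (or a derivative or corrector thereof) or $\mathring{\overline R}_q$, all of which vanish on the $\tau_{q+1}$-neighborhood of $\GG^{(q+1)}$ by \eqref{e:w_support} and \eqref{eq:bar:Rq:supp}. The main obstacle I anticipate is the careful bookkeeping of the competing powers of $\lambda_{q+1}, \tau_{q+1}, \mu, \ell_\perp, \ell_\|$ across the oscillation and hyperdissipative pieces, and choosing $p > 1$ and $\eps_R > 0$ so that the combined estimate closes as $\|\tilde R\|_{L^p} \les \lambda_{q+1}^{-2\eps_R} \delta_{q+2}$. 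The threshold $\alpha < 5/4$ is decisive here: it is exactly what renders $\lambda_{q+1}^{2\alpha - 1} \ell_\perp^{2/p - 1} \ell_\|^{1/p - 1/2}$ smaller than $\delta_{q+2}$ by a positive power of $\lambda_{q+1}$ once $p$ is taken close enough to $1$ and $\beta b$ small enough.
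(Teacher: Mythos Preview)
Your decomposition, the handling of the corrector error, the second oscillation term, and the support property are all correct and match the paper. Two points, however, are genuine gaps rather than missing detail.

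First, for the oscillation term $\RSZ\bigl(\sum_{i,\xi}\nabla(a_{(\xi)}^2)\,\Proj_{\neq 0}(W_{(\xi)}\otimes W_{(\xi)})\bigr)$ you claim the summand is ``high-frequency'' so that $|\nabla|^{-1}$ gains $(\lambda_{q+1}\ell_\perp)^{-1}$, and you cite Lemma~\ref{lem:Lp:independence}. But that lemma is a pure product estimate $\|fg\|_{L^p}\les C_f\|g\|_{L^p}$ with no inverse-derivative content, and the product $\nabla(a_{(\xi)}^2)\cdot\Proj_{\geq\lambda_{q+1}\ell_\perp/2}(W_{(\xi)}\otimes W_{(\xi)})$ is \emph{not} Fourier-supported above $\lambda_{q+1}\ell_\perp/2$, since multiplication by the slow amplitude smears the spectrum down. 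The paper instead invokes \cite[Lemma~B.1]{BV}, which is specifically a bound for $|\nabla|^{-1}\Proj_{\neq 0}(a\,\Proj_{\geq\kappa}F)$ in $L^p$ in terms of $\kappa^{-1}C_a\|F\|_{L^p}$, under a scale-separation hypothesis on $a$; this is the tool that actually produces the $(\lambda_{q+1}\ell_\perp)^{-1}$ factor. Lemma~\ref{lem:Lp:independence} plays no role in the proof of this proposition.

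Second, for the linear time-derivative piece $\RSZ\partial_t(w_{q+1}^{(p)}+w_{q+1}^{(c)})$, a direct estimate picks up $\|\partial_t W_{(\xi)}\|_{L^p}\sim\ell_\perp\lambda_{q+1}\mu/\ell_\|$, which after inserting $\mu=\lambda_{q+1}^{2\alpha-1}\ell_\|/\ell_\perp$ gives a contribution of order $\lambda_{q+1}^{2\alpha}$ rather than $\lambda_{q+1}^{2\alpha-1}$, one power too large to balance the dissipative term. The paper rewrites this piece via \eqref{e:curl_form} as $\sum_{i,\xi}\RSZ\partial_t\curl\curl(a_{(\xi)}V_{(\xi)})$ and uses that $\RSZ\curl$ is Calder\'on--Zygmund of order zero, reducing to $\|\partial_t\curl(a_{(\xi)}V_{(\xi)})\|_{L^p}$; the built-in $\lambda_{q+1}^{-2}$ in $V_{(\xi)}$, offset by one remaining $\curl$, recovers exactly the missing $\lambda_{q+1}^{-1}$. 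Without this step your asserted balance does not hold.
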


An immediate consequence of Proposition~\ref{prop:stress:main} is that the desired inductive estimates \eqref{eq:Rq:L1} and \eqref{eq:Rq:H3:x} and the support property \eqref{eq:Rq:time:support} hold for the Reynolds stress $\RR_{q+1}$, which is defined as follows.
\begin{corollary}
\label{cor:stress:main}
There exists a traceless symmetric $2$ tensor $\mathring R_{q+1}$ and a scalar pressure field $p_{q+1}$ such that the triple
$(v_{q+1},p_{q+1},\RR_{q+1})$ solves the Navier-Stokes-Reynolds system \eqref{eq:NSE:Reynolds} at level $q+1$.
Moreover, the following bounds hold
\begin{subequations}
\begin{align}
\norm{\mathring R_{q+1}}_{L^1}&\leq \lambda_{q+1}^{- \eps_R}\delta_{q+2} \, ,
\label{e:R_final_L1}\\
\norm{\mathring R_{q+1}}_{H^{3}}&\leq \lambda_{q+1}^{7} \, ,
\label{e:R_final_H3}
\end{align}
\end{subequations}
and $\RR_{q+1} (t) = 0$ whenever ${\rm dist}(t,\GG^{(q+1)}) \leq \tau_{q+1}$.
\end{corollary}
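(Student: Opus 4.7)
The plan is to set $\mathring R_{q+1} := \widetilde R$ and take $p_{q+1}$ to be the zero-mean normalization of the pressure $\widetilde p$ given by Proposition~\ref{prop:stress:main}. Then the Navier-Stokes-Reynolds system \eqref{eq:NSE:Reynolds} at level $q+1$ is exactly \eqref{eq:tilde:R:def}, and the support property $\mathring R_{q+1}(t) = 0$ whenever ${\rm dist}(t,\GG^{(q+1)})\leq \tau_{q+1}$ is precisely \eqref{eq:tilde:R:supp}. Moreover, because of the nesting $\GG^{(q+1)} \supset \GG^{(0)} = [0,\sfrac{T}{3}]\cup[\sfrac{2T}{3},T]$, the support of $\mathring R_{q+1}$ is contained in $(\sfrac{T}{3}, \sfrac{2T}{3})$, so the refined space-time bounds \eqref{eq:T*T*T*}--\eqref{eq:T*T*T*T*} on $v_{q+1}$ are available on $\supp \mathring R_{q+1}$.

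The $L^1$ bound \eqref{e:R_final_L1} is immediate from H\"older's inequality on the finite-measure torus: since $p > 1$,
\[
\norm{\mathring R_{q+1}(t)}_{L^1(\T^3)} \lesssim \norm{\widetilde R(t)}_{L^p(\T^3)} \lesssim \lambda_{q+1}^{-2\eps_R}\delta_{q+2}
\]
by \eqref{eq:tilde:R:Lp}, and the spare factor $\lambda_{q+1}^{-\eps_R}$ absorbs the implicit constant once $a$ is taken large enough.

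For the $H^3$ bound \eqref{e:R_final_H3}, the plan is to recover $\mathring R_{q+1}$ from the equation \eqref{eq:NSE:Reynolds} via an inverse-divergence argument. Applying $\Proj_H$ to the equation and using that $v_{q+1}$ is divergence-free, the pressure is eliminated and one finds
\[
\Proj_H\bigl(\div \mathring R_{q+1}\bigr) = \partial_t v_{q+1} + \Proj_H\div(v_{q+1}\otimes v_{q+1}) + (-\Delta)^{\alpha}v_{q+1} =: F,
\]
a divergence-free vector field. Using the explicit construction of $\widetilde R$ in Section~\ref{sec:stress} as a sum of applications of $\mathcal R$ to computable quantities, $\mathring R_{q+1}$ can be written as $\mathcal R F$ modulo symmetric traceless corrections obeying the same estimates. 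Since $\mathcal R$ has the regularity of $\abs{\nabla}^{-1}$, the three contributions are estimated as follows. By \eqref{eq:T*T*T*T*},
\[
\norm{\mathcal R\partial_t v_{q+1}}_{H^3}\lesssim \norm{\partial_t v_{q+1}}_{H^2}\lesssim \lambda_{q+1}^{5}.
\]
Using the $H^3$ algebra together with $\norm{v_{q+1}}_{L^\infty}\lesssim \norm{v_{q+1}}_{L^2}^{\sfrac 12}\norm{v_{q+1}}_{H^3}^{\sfrac 12}$ and \eqref{eq:T*T*},
\[
\norm{\mathcal R\div(v_{q+1}\otimes v_{q+1})}_{H^3}\lesssim \norm{v_{q+1}\otimes v_{q+1}}_{H^3}\lesssim \norm{v_{q+1}}_{H^3}\norm{v_{q+1}}_{L^\infty}\lesssim \lambda_{q+1}^{4}\cdot \lambda_{q+1}^{2}= \lambda_{q+1}^{6}.
\]
Finally, since $\alpha<\sfrac 54$ implies $2+2\alpha<\sfrac 92$, by \eqref{eq:T*T*T*},
\[
\norm{\mathcal R(-\Delta)^{\alpha} v_{q+1}}_{H^3}\lesssim \norm{v_{q+1}}_{H^{2+2\alpha}}\lesssim \lambda_{q+1}^{5}.
\]
Summing yields $\norm{\mathring R_{q+1}}_{H^3}\lesssim \lambda_{q+1}^{6}\leq \lambda_{q+1}^{7}$ once a spare factor of $\lambda_{q+1}$ absorbs the implicit constant.

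The main obstacle is making the identification $\mathring R_{q+1} = \mathcal R F$ rigorous: on $\T^3$ one can in principle add a symmetric traceless divergence-free tensor to $\mathcal R F$ without altering the Navier-Stokes-Reynolds equation, so the proof must rely on the explicit construction of $\widetilde R$ in Section~\ref{sec:stress} (as a concrete sum of inverse-divergences of terms with controlled $H^3$ norms) to rule out such corrections and close the $H^3$ estimate.
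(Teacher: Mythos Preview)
Your $L^1$ estimate and the support property are fine, and your $H^3$ computation is exactly the right one.  The obstacle you identified at the end is real, however, and it is not resolved by going back to the explicit construction of $\widetilde R$: since $\mathcal R$ does not annihilate gradients, the individual summands $\mathcal R G_k$ making up $\widetilde R$ do not coincide with $\mathcal R \Proj_H G_k$, and there is no reason for $\widetilde R$ to equal $\mathcal R F$.

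The paper bypasses this issue by \emph{not} setting $\mathring R_{q+1} = \widetilde R$.  Instead it defines
\[
\mathring R_{q+1} := \mathcal R\, \Proj_H \,\div \widetilde R, \qquad p_{q+1} := \widetilde p - \Delta^{-1}\div\div \widetilde R.
\]
A direct check shows $(v_{q+1},p_{q+1},\mathring R_{q+1})$ still solves \eqref{eq:NSE:Reynolds}, since the gradient part of $\div\widetilde R$ is absorbed into the new pressure.  The support property is inherited because $\mathcal R\,\Proj_H\,\div$ is purely spatial.  The $L^1$ bound follows because $\mathcal R\,\Proj_H\,\div$ is a Calder\'on--Zygmund operator of order zero, hence bounded on $L^p$, and then $\|\mathring R_{q+1}\|_{L^1}\lesssim\|\mathring R_{q+1}\|_{L^p}\lesssim\|\widetilde R\|_{L^p}$.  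Crucially, with this definition one has $\mathring R_{q+1} = \mathcal R F$ \emph{on the nose}: the identity $\div\mathring R_{q+1}=\Proj_H\div\widetilde R=F$ holds by construction, so your $H^3$ computation goes through verbatim and yields $\|\mathring R_{q+1}\|_{H^3}\lesssim\|F\|_{H^2}\lesssim\lambda_{q+1}^{13/2}$.  The trick, in short, is to discard the part of $\widetilde R$ that would cause trouble rather than to argue it was never there.
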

\begin{proof}[Proof of Corollary~\ref{cor:stress:main}]
With $\widetilde R$ and $\widetilde p$ defined in Proposition~\ref{prop:stress:main}, we let 
\begin{align*}
\mathring R_{q+1}= {\mathcal R \Proj_H \div \widetilde R} \qquad \mbox{and} \qquad p_{q+1} = \tilde p - \Delta^{-1} \div \div \widetilde R\,.
\end{align*}
It follows from \eqref{eq:tilde:R:def} and the definitions of the inverse-divergence operator $\RSZ$ and of the Helmholtz projection $\Proj_H$ that the $(v_{q+1},p_{q+1},\RR_{q+1})$ solve the Navier-Stokes-Reynolds system \eqref{eq:NSE:Reynolds} at level $q+1$. Since the operator $\mathcal R \Proj_H \div$ is time-independent, the claimed support property for $\RR_{q+1}$, namely  \eqref{eq:Rq:time:support} at level $q+1$, follows directly from \eqref{eq:tilde:R:supp}.

With the parameter $p>1$ from Proposition~\ref{prop:stress:main}, using  that $\norm{{\mathcal R} \Proj_H \div }_{L^p\to L^p} \les 1$, we directly bound
\begin{align}\notag
\norm{\mathring R_{q+1}}_{L^1}&\lesssim \norm{\mathring R_{q+1}}_{L^p}\lesssim \norm{\widetilde R}_{L^p}\lesssim \lambda_{q+1}^{-2\eps_R}\delta_{q+2}\,.
\end{align}
The estimate \eqref{e:R_final_L1} then follows since the residual factor  $\lambda_{q+1}^{-\eps_R}$ can absorb any constant if we assume $a$ is sufficiently large.  In order to prove \eqref{e:R_final_H3}, we use equation~\eqref{eq:tilde:R:def}, the support property of $\RR_{q+1}$ which implies that $\supp \RR_{q+1} \subset \T^3 \times [\sfrac T3, \sfrac {2T}{3}]$, and the bounds \eqref{eq:T*}--\eqref{eq:T*T*T*T*}. Combining these, we obtain
\begin{align*}
\norm{\mathring R_{q+1}}_{H^3}&= \norm{\mathcal R \Proj_H (\div \widetilde R)}_{H^3}\\
&\les \norm{\partial_t v_{q+1}+\div(v_{q+1}\otimes v_{q+1}) +(-\Delta )^{\alpha}v_{q+1}}_{L^\infty(\sfrac T3,\sfrac{2T}{3};H^2)} \\
&\lesssim  \norm{\partial_t v_{q+1}}_{L^\infty(\sfrac T3,\sfrac{2T}{3};H^2)}+\norm{v_{q+1}\otimes v_{q+1}}_{H^{3}} + \norm{v_{q+1}}_{L^\infty(\sfrac T3,\sfrac{2T}{3};H^{\sfrac 92})} \\
&\lesssim   \norm{\partial_t v_{q+1}}_{L^\infty(\sfrac T3,\sfrac{2T}{3};H^2)} +\norm{v_{q+1}}_{H^{3}} \norm{v_{q+1}}_{L^\infty} + \norm{v_{q+1}}_{L^\infty(\sfrac T3,\sfrac{2T}{3};H^{\sfrac 92})} \\
&\lesssim  \norm{\partial_t v_{q+1}}_{L^\infty(\sfrac T3,\sfrac{2T}{3};H^2)}+\norm{v_{q+1}}_{H^{3}}^{\sfrac 32} \norm{v_{q+1}}_{L^2}^{\sfrac 12}+ \norm{v_{q+1}}_{L^\infty(\sfrac T3,\sfrac{2T}{3};H^{\sfrac 92})} \\
&\lesssim  \lambda_{q+1}^{5}+ \lambda_{q+1}^6 \delta_0^{\sfrac 14}+ \lambda_{q+1}^{5} \\
&\les \lambda_{q+1}^{\sfrac{13}{2}} \, .
\end{align*}
For the dissipative term we have used that $\alpha < \sfrac 54$, so that $2\alpha +2 <\sfrac 92$. Using the residual power of $\lambda_{q+1}^{- \sfrac 12}$ we may absorb any constants and thus \eqref{e:R_final_H3} follows.
\end{proof}

\subsection{Proof of Proposition~\ref{prop:stress:main}}
Recall that $v_{q+1} = w_{q+1} + \vo$, where $\vo$ is defined in Section~\ref{ss:proof_prop} and $(\overline v_q, {\mathring {\bar R}}_{q})$ solve \eqref{eq:NSE:Reynolds}. Using \eqref{eq:w:q+1:def} we obtain
\begin{align}
\div \tilde R - \nabla \tilde p
&=   (-\Delta)^{\alpha} w_{q+1} +  \partial_t (w^{(p)}_{q+1} +w^{(c)}_{q+1} )  + \div( \vo \otimes w_{q+1} + w_{q+1} \otimes \vo) \notag\\
&\qquad + \div\left((w_{q+1}^{(c)}+w_{q+1}^{(t)}) \otimes w_{q+1}+ w_{q+1}^{(p)} \otimes (w_{q+1}^{(c)}+w_{q+1}^{(t)}) \right) \notag\\
&\qquad + \div(w_{q+1}^{(p)} \otimes w_{q+1}^{(p)} + \RRO)+\partial_t w^{(t)}_{q+1} \notag\\
&=: \div \left( \tilde R_{\rm linear} + \tilde R_{\rm corrector} + \tilde R_{\rm oscillation}  \right) + \nabla q\,.
\label{eq:tilde:R:1}
\end{align}
Here, the linear error and corrector errors are defined by applying $\RSZ$ to the first and respectively second line of \eqref{eq:tilde:R:1}, while the oscillation error is defined in Section~\ref{sec:oscillation} below. The  zero mean pressure $q$ is defined implicitly in a unique  way.

Besides the already used inequalities between the parameters, $\ell_{\perp}$, $\ell_{\|}$ and $\lambda_{q+1}$, we shall use that if $p$ is sufficiently close to $1$ the following bounds hold:
\begin{align}
&\tau_{q+1}^{-5} \lambda_{q+1}^{2\alpha-1} \ell_{\perp}^{\sfrac{2}{p}-1}\ell_{\|}^{\sfrac{1}{p}-\sfrac12}+   \tau_{q+1}^{-5} \lambda_{q+1}^{1-2\alpha} \ell_{\perp}^{\sfrac{2}{p}-2}\ell_{\|}^{\sfrac{1}{p}-\sfrac 52} \notag\\
&\qquad + \tau_{q+1}^{-6}\lambda_{q+1}^{-1}\ell_{\perp}^{\sfrac{2}{p}-3}\ell_{\|}^{\sfrac{1}{p}-1}  + \tau_{q+1}^{-6} \lambda_{q+1}^{1-2\alpha} \ell_{\perp}^{\sfrac{2}{p}-1}\ell_{\|}^{\sfrac{1}{p}-2} \les \lambda_{q+1}^{-2\eps_R} \delta_{q+2}\,.
\label{eq:parameters}
\end{align}
To see this, we appeal to the bound \eqref{eq:useful:too} for $\tau_{q+1}^{-1}$ and the parameter choices~\eqref{e:param_def} to conclude that the left side of \eqref{eq:parameters} is bounded from above as
\begin{align}
& \ell_\perp^{\sfrac{2}{p}-2} \ell_{\|}^{\sfrac 1p - 1} \left( \tau_{q+1}^{-5} \lambda_{q+1}^{2\alpha-1} \ell_{\perp} \ell_{\|}^{\sfrac12}+   \tau_{q+1}^{- 5} \lambda_{q+1}^{1-2\alpha}  \ell_{\|}^{-\sfrac 32} + \tau_{q+1}^{-6}\lambda_{q+1}^{-1}\ell_{\perp}^{-1}   + \tau_{q+1}^{-6} \lambda_{q+1}^{1-2\alpha} \ell_{\perp} \ell_{\|}^{-1} \right) \notag\\
& \les \ell_\perp^{\sfrac{2}{p}-2} \ell_{\|}^{\sfrac 1p - 1} \tau_{q+1}^{-6}  \left( \lambda_{q+1}^{2\alpha-1} \ell_{\perp} \ell_{\|}^{\sfrac12}+    \lambda_{q+1}^{1-2\alpha}  \ell_{\|}^{-\sfrac 32} +  \lambda_{q+1}^{-1}\ell_{\perp}^{-1}   + \lambda_{q+1}^{1-2\alpha} \ell_{\perp} \ell_{\|}^{-1} \right) 
\notag\\
& \les \ell_\perp^{\sfrac{2}{p}-2} \ell_{\|}^{\sfrac 1p - 1} \lambda_{q+1}^{\frac{3(5-4\alpha)}{50}} \left(  \lambda_{q+1}^{-\frac{5-4\alpha}{12}}+    \lambda_{q+1}^{- \frac{5-4\alpha}{8}}   +  \lambda_{q+1}^{-\frac{5(5-4\alpha)}{24}}  + \lambda_{q+1}^{-\frac{28\alpha + 1}{24}} \right) \notag\\
&\les \ell_\perp^{\sfrac{2}{p}-2} \ell_{\|}^{\sfrac 1p - 1} \lambda_{q+1}^{- \frac{5-4\alpha}{50}} \notag\\
&\les \lambda_{q+1}^{- \frac{5-4\alpha}{100}}\notag
\end{align}
where in the last inequality we have chosen $p$ sufficiently close to $1$, depending only on $\alpha$. 
To conclude the proof of \eqref{eq:parameters}, note that
\begin{align}
\lambda_{q+1}^{-2\eps_R} \delta_{q+2} \geq  \lambda_{q+1}^{- 2 \eps_R} \lambda_{q+2}^{-2\beta} \geq \lambda_{q}^{- 2 \eps_R b -2\beta b^2}\,,\notag
\end{align}
and therefore if we ensure that $\eps_R$ and $\beta$ are sufficiently small, depending on $\alpha$ and $b$ only, such that 
\begin{align}
2 \eps_R b + 2 \beta b^2 \leq \frac{5-4\alpha}{100},
\label{eq:main:constraint:b}
\end{align}
then the three estimates above imply \eqref{eq:parameters}.

\subsubsection{The linear  error}
In order to prove \eqref{eq:tilde:R:Lp}, we first estimate the contributions to $\tilde R$ coming from $\tilde R_{\rm linear}$. Recalling~\eqref{e:curl_form}, the bounds \eqref{eq:vq:global:2}, \eqref{e:W_Lp_bnd}, \eqref{e:a_est_CN}, and \eqref{e:w_est_deriv}, we obtain
\begin{align}
\norm{ \tilde R_{\rm linear}}_{L^p}  
&\les \norm{{\mathcal R} (   (-\Delta)^{\alpha} w_{q+1} )}_{L^p} +\norm{\mathcal R(\partial_t (w^{(p)}_{q+1} +w^{(c)}_{q+1} ))}_{L^p}+ \norm{\mathcal R\div(\vo \otimes w_{q+1} + w_{q+1} \otimes \vo)}_{L^p}
\notag\\
& \les
\norm{w_{q+1}}_{W^{2\alpha-1,p}}+\sum_{i}\sum_{\xi\in \Lambda_{(i)}} \norm{\partial_t\mathcal R\curl\curl(a_{(\xi)}V_{(\xi)})}_{L^p}+\norm{\vo}_{L^{\infty}}\norm{w_{q+1}}_{L^{p}} 
\notag\\
&\les
(1+\norm{\vo}_{L^{\infty}}) \norm{w_{q+1}}_{W^{2\alpha-1,p}} +\sum_{i} \sum_{\xi\in \Lambda_{(i)}}\norm{\partial_t\curl(a_{(\xi)}V_{(\xi)})}_{L^p}\notag\\
&\les
(1+\norm{\vo}_{H^3})  \norm{w_{q+1}}_{W^{2\alpha-1,p}} + \sum_i \sum_{\xi\in \Lambda_{(i)}}  \left(\norm{a_{(\xi)}}_{C^1_{x,t}} \norm{\partial_t V_{(\xi)}}_{W^{1,p}}+ \norm{a_{(\xi)}}_{C^2_{x,t}}  \norm{V_{(\xi)}}_{W^{1,p}}\right) \notag\\
&\les \lambda_q^{4} \tau_{q+1}^{-2} \lambda_{q+1}^{2\alpha-1} \ell_{\perp}^{\sfrac{2}{p}-1}\ell_{\|}^{\sfrac{1}{p}-\sfrac12} +\tau_{q+1}^{-5}  \ell_{\perp}^{\sfrac{2}{p}-1}\ell_{\|}^{\sfrac{1}{p}-\sfrac12}\lambda_{q+1}^{-1}\left(\frac{\ell_{\perp}\lambda_{q+1}\mu}{\ell_{\|}}\right) +  \tau_{q+1}^{-8}  \ell_{\perp}^{\sfrac{2}{p}-1}\ell_{\|}^{\sfrac{1}{p}-\sfrac12}\lambda_{q+1}^{-1} \notag\\
&\les  \tau_{q+1}^{-5} \lambda_{q+1}^{2\alpha-1} \ell_{\perp}^{\sfrac{2}{p}-1}\ell_{\|}^{\sfrac{1}{p}-\sfrac12}\label{eq:tilde:R:2} \, .
\end{align}
Here we have used the definition of $\mu$ from \eqref{e:param_def}, and the parameter inequalities $\lambda_q^{4} \les \tau_{q+1}^{-1} \les \lambda_{q+1}^{\sfrac{\alpha}{2}}$. By \eqref{eq:parameters}, the above estimate is consistent with \eqref{eq:tilde:R:Lp}.

\subsubsection{Corrector error}
Next we turn to the  errors involving correctors. Appealing to estimates \eqref{e:wp_N_est}  {and} \eqref{e:wct_N_est} of Proposition~\ref{prop:perturbation}, we have
\begin{align*}
\norm{ \tilde R_{\rm corrector}}_{L^p} 
&\leq \norm{\mathcal R \div\left((w_{q+1}^{(c)}+w_{q+1}^{(t)}) \otimes w_{q+1}+ w_{q+1}^{(p)} \otimes (w_{q+1}^{(c)}+w_{q+1}^{(t)}) \right)}_{L^p}\\
&\lesssim   \norm{ w_{q+1}^{(c)}+w_{q+1}^{(t)} }_{L^{2p}}   \norm{ w_{q+1}}_{L^{2p}} + \norm{w_{q+1}^{(p)}  }_{L^{2p}}   \norm{ w_{q+1}^{(c)}+w_{q+1}^{(t)} }_{L^{2p}}
\\
&\lesssim \tau_{q+1}^{-5} \mu^{-1} \ell_{\perp}^{\sfrac{2}{p}-3}\ell_{\|}^{\sfrac{1}{p}-\sfrac32} 
\\
&\lesssim  \tau_{q+1}^{-5} \lambda_{q+1}^{1-2\alpha} \ell_{\perp}^{\sfrac{2}{p}-2}\ell_{\|}^{\sfrac{1}{p}-\sfrac 52}
\,.
%\label{eq:tilde:R:6}
\end{align*}
In the last inequality we have appealed to the definition~\eqref{e:param_def}.  Due to \eqref{eq:parameters} this estimate is sufficient for \eqref{eq:tilde:R:Lp}.

\subsubsection{Oscillation error}
\label{sec:oscillation}
In this section we estimate the remaining error, $\tilde R_{\rm oscillation}$, which obeys
\begin{align}
&\div \left( \tilde R_{\rm oscillation} \right) + \nabla P =\div\left(w^{(p)}_{q+1}\otimes w^{(p)}_{q+1} + \RRO\right)+\partial_t w^{(t)}_{q+1}  \,,
\label{eq:Ronaldo}
\end{align}
where $P$ is a suitable pressure.
From the definition of $w_{q+1}^{(p)}$ in \eqref{eq:w:q+1:p:def} and of the coefficients $a_{(\xi)}$ in \eqref{eq:a:oxi:def}, using the disjoint support property of the intermittent jets \eqref{eq:Messi}, the fact that $\Lambda_{(1)} \cap \Lambda_{(2)} = \emptyset$, and appealing to the identity \eqref{e:WW_id}, we have
\begin{align*}
&\div\left(w^{(p)}_{q+1}\otimes w^{(p)}_{q+1} \right) +   \div  {\RRO}  \\
&\qquad = \sum_{i,j}\sum_{\xi\in \Lambda_{(i)}, \xi'\in\Lambda_{(j)}}\div\left(a_{(\xi)} a_{(\xi')} W_{(\xi)}\otimes W_{(\xi')}\right)+ \div \RRO 
  \\
&\qquad = \sum_{i}\sum_{\xi\in \Lambda_{(i)}}\div\left(a_{(\xi)}^2 W_{(\xi)}\otimes W_{(\xi)}\right)+ \div \RRO 
 \\
&\qquad = \sum_{i,j}\sum_{\xi\in \Lambda_{(i)}}\div\left(a_{(\xi)}^2\left(W_{(\xi)}\otimes W_{(\xi)}-\fint_{\T^3}W_{(\xi)}\otimes W_{(\xi)} \, dx \right)\right)  +\nabla \left( \theta^2 \sum_{i\geq 0} \rho_i \chi_{(i)}^2  \right)
 \\
&\qquad =\sum_{i}\sum_{\xi\in \Lambda_{(i)}}\underbrace{\div\left(a_{(\xi)}^2\mathbb P_{{\geq} \sfrac{\lambda_{q+1}\ell_{\perp}}{2}}\left( W_{(\xi)}\otimes W_{(\xi)} \right)\right)}_{E_{(\xi)}} \; +\; \nabla \left( \theta^2 \sum_{i\geq 0} \rho_i \chi_{(i)}^2  \right) \, . 
\end{align*}
Here we use that since $W_{(\xi)}$ is $\left(\sfrac{\T}{\ell_{\perp} \lambda}\right)^3$-periodic, the minimal separation between active frequencies of ${W}_{(\xi)} \otimes { W}_{(\xi)}$ and the $0$ frequency is given by $\lambda_{q+1} \ell_{\perp}$. That is, $\Proj_{\neq 0}(W_{(\xi)} \otimes W_{(\xi)}) = \Proj_{\geq \lambda_{q+1} \ell_\perp/2}(W_{(\xi)} \otimes W_{(\xi)})$. We further split
\begin{align*}
E_{(\xi)}&=  \mathbb P_{\neq 0}\left(\mathbb P_{{\geq}\sfrac{\lambda_{q+1}\ell_{\perp}}{2}}\left( W_{(\xi)}\otimes  W_{(\xi)}\right)\nabla \left( a_{(\xi)}^2\right) \right) + \Proj_{\neq 0}\left(a_{(\xi)}^2\div \left( W_{(\xi)}\otimes   W_{(\xi)}\right) \right)\\
&=: E_{(\xi,1)}+E_{(\xi,2)} \, .
\end{align*}
The term $\RSZ E_{(\xi,1)}$, which is the first contribution to $\tilde R_{\rm oscillation}$,  is estimated by using that  the coefficient functions $a_{(\xi)}$ are essentially frequency localized inside of the ball of radius $\tau_{q+1}^{-3} \ll \lambda_{q+1} \ell_\perp$, in view of \eqref{e:a_est_CN}. More precisely, by Lemma~\ref{lem:many:bounds} we are justified to use~\cite[Lemma B.1]{BV}, with the parameter choices   $\lambda = \tau_{q+1}^{-3}$, $C_a = \tau_{q+1}^{-5}$, $\kappa = \sfrac{\lambda_{q+1} \ell_{\perp}}{2}$, and $L$ sufficiently large, to conclude
\begin{align*}
\norm{\mathcal R E_{(\xi,1)}}_{L^p}&\les \norm{\abs{\nabla}^{-1}  E_{(\xi,1)}}_{L^p}
 \\
& \les\norm{\abs{\nabla}^{-1} \mathbb P_{\neq 0}\left(\mathbb P_{{\geq}\sfrac{\lambda_{q+1}\ell_{\perp}}{2}}\left(  W_{(\xi)}\otimes   W_{(\xi)}\right)\nabla \left( a_{(\xi)}^2\right) \right)}_{L^p}
 \\
&\les \frac{\tau_{q+1}^{-5}}{\lambda_{q+1}\ell_{\perp}}\left(1+\frac{\tau_{q+1}^{-6}}{\left(\tau_{q+1}^3 \lambda_{q+1} \ell_{\perp}\right)^{L-2}}\right)  \norm{  W_{(\xi)}\otimes   W_{(\xi)}}_{L^p} \\
&\les   \frac{\tau_{q+1}^{-5}}{\lambda_{q+1}\ell_{\perp}}  \norm{  W_{(\xi)}}_{L^{2p}} \norm{  W_{(\xi)}}_{L^{2p}} \\
&\les \tau_{q+1}^{-5}\lambda_{q+1}^{-1}\ell_{\perp}^{\sfrac{2}{p}-3}\ell_{\|}^{\sfrac{1}{p}-1}\, .
\end{align*}
In the last inequality above we have used estimate~\eqref{e:W_Lp_bnd}, and in the second to last inequality we have used that  by taking $L$ sufficiently large, for instance {$L=4$ is sufficient} in view of the first inequality in \eqref{eq:useful:too} and the definition of $\ell_\perp$ in \eqref{e:param_def}, we have $\tau_{q+1}^{-6} (\tau_{q+1}^{3} \lambda_{q+1} \ell_{\perp})^{2-L} \les 1$. Summing these contributions over $0\leq i \leq i_{\rm max}$ costs an additional factor of $\tau_{q+1}^{-1}$, and from the third term in \eqref{eq:parameters} we obtain the the bound for $\RSZ E_{(\xi,1)}$ is consistent with \eqref{eq:tilde:R:Lp}.

We are left to estimate the contribution from the $E_{(\xi,2)}$ term. From identity \eqref{eq:useful:2} we have that 
\begin{align*}
E_{(\xi,2)}&=\frac{1}{ \mu}\mathbb P_{\neq 0}\left(a_{(\xi)}^2\phi^2_{(\xi)}\partial_t \psi^2_{(\xi)}\xi\right) = \frac{1}{\mu} \partial_t \Proj_{\neq 0}\left(a_{(\xi)}^2 \phi_{(\xi)}^2 \psi_{(\xi)}^2 \xi\right) - \frac{1}{\mu}  \Proj_{\neq 0}\left( (\partial_t a_{(\xi)}^2) \phi_{(\xi)}^2 \psi_{(\xi)}^2 \xi\right)\, .
\end{align*}
Hence, summing in $\xi$ and $i$, pairing with the $\partial_t w^{(t)}_{q+1}$ present in \eqref{eq:Ronaldo}, recalling the definition of $w^{(t)}_{q+1}$ in \eqref{e:temporal_corrector}, and noting that $\Id -\Proj_H = \nabla (\Delta^{-1} \div)$, we obtain
\begin{align}
\sum_{i}\sum_{\xi\in \Lambda_{(i)}}E_{(\xi,2)}+\partial_t w^{(t)}_{q+1}
&= \frac{1}{\mu} \sum_{i} \sum_{\xi \in \Lambda_{(i)}} (\Id - \Proj_H) \partial_t \Proj_{\neq 0}\left(a_{(\xi)}^2 \phi_{(\xi)}^2 \psi_{(\xi)}^2 \xi\right) - \frac{1}{\mu}  \sum_{i}\sum_{\xi\in \Lambda_{(i)}} \Proj_{\neq 0}\left(\partial_t (a_{(\xi)}^2)\phi^2_{(\xi)} \psi^2_{(\xi)}\xi\right) \notag\\
&=\nabla q - \frac{1}{\mu}  \sum_{i}\sum_{\xi\in \Lambda_{(i)}} \Proj_{\neq 0}\left(\partial_t (a_{(\xi)}^2)\phi^2_{(\xi)} \psi^2_{(\xi)}\xi\right) \, ,
\label{eq:peanuts}
\end{align}
where $q = \frac{1}{\mu} \sum_{i} \sum_{\xi \in \Lambda_{(i)}} \Delta^{-1} \div \partial_t \Proj_{\neq 0}\left(a_{(\xi)}^2 \phi_{(\xi)}^2 \psi_{(\xi)}^2 \xi\right) $ is a pressure term. 
At last, we estimate the second contribution to $\tilde R_{\rm oscillation}$ by  using \eqref{e:phi_Lp_bnd}, \eqref{e:psi_Lp_bnd}, Fubini, \eqref{eq:i:max:bound}, and \eqref{e:a_est_CN}, to obtain
\begin{align}
\norm{\mathcal R\left(\frac{1}{\mu} \sum_{i}\sum_{\xi\in \Lambda_{(i)}} \Proj_{\neq 0}\left(\partial_t (a_{(\xi)}^2)\phi^2_{(\xi)} \psi^2_{(\xi)}\xi\right) \right)}_{L^p}&\les \frac{1}{\mu}\sum_{i}\sum_{\xi\in \Lambda_{(i)}}\norm{\partial_t (a_{(\xi)}^2)\phi^2_{(\xi)} \psi^2_{(\xi)}\xi}_{L^p}\notag\\
&\les \frac{1}{\mu}\sum_{i}\sum_{\xi\in \Lambda_{(i)}}\norm{a_{(\xi)}}_{C^{1}_{t,x}}\norm{a_{(\xi)}}_{L^{\infty}}\norm{\phi_{(\xi)}}_{L^{2p}}^2\norm{\psi_{(\xi)}}_{L^{2p}}^2\notag\\
&\les \mu^{-1} \sum_{i=0}^{i_{\rm max}}\tau_{q+1}^{-5}  \ell_{\perp}^{\sfrac{2}{p}-2}\ell_{\|}^{\sfrac{1}{p}-1}\notag\\
&\les \tau_{q+1}^{-6}\mu^{-1}\ell_{\perp}^{\sfrac{2}{p}-2}\ell_{\|}^{\sfrac{1}{p}-1} \notag\\
&\les \tau_{q+1}^{-6} \lambda_{q+1}^{1-2\alpha} \ell_{\perp}^{\sfrac{2}{p}-1}\ell_{\|}^{\sfrac{1}{p}-2}
\, .
\label{e:sutter_home2}
\end{align}
In the last equality above we have used the definition of $\mu$. Using the bound for the last term in \eqref{eq:parameters}, we conclude that the above estimate is consistent with \eqref{eq:tilde:R:Lp}, which shows that $\tilde R_{\rm oscillation}$ also obeys this inequality.

\subsubsection{The temporal support of $\tilde R$}
In order to conclude the proof of Proposition~\ref{prop:stress:main}, we need to show that \eqref{eq:tilde:R:supp} holds.
From \eqref{eq:tilde:R:1} it follows that 
\[
\supp \tilde R \subset \supp w_{q+1}^{(p)} \cup \supp w_{q+1}^{(c)} \cup \supp w_{q+1}^{(t)} \cup \supp \RRO.
\]
By \eqref{eq:bar:Rq:supp} we know that $\RRO(t) = 0$ whenever ${\rm dist}(t, \GG^{(q+1)})\leq 2 \tau_{q+1}$, while by property \eqref{e:a_support} we have that $a_{(\xi)}(t) = 0$ whenever  ${\rm dist}(t, \GG^{(q+1)})\leq  \tau_{q+1}$. By their definitions, the principal \eqref{eq:w:q+1:p:def}, incompressibility \eqref{eq:w:q+1:c:def}, and temporal correctors \eqref{e:temporal_corrector}, are composed only of terms which contain the coefficient functions $a_{(\xi)}$, and thus, similarly to \eqref{e:w_support} we have that $w_{q+1}^{(p)}(t) = w_{q+1}^{(c)} (t) = w_{q+1}^{(t)}(t) = 0$ whenever  ${\rm dist}(t, \GG^{(q+1)})\leq  \tau_{q+1}$. This proves \eqref{eq:tilde:R:supp}.

\section*{Acknowledgments} 
The work of T.B. has been partially supported by the NSF grant DMS-1600868.
V.V. was partially supported by the NSF grant DMS-1652134  and an Alfred P. Sloan Research Fellowship.

\bibliographystyle{abbrv}

\end{document}